\renewcommand\title[1]{\gdef\@title{\reset@font\Large\bfseries #1}}
\renewcommand\section{\@startsection {section}{1}{\z@}%
				{-3.5ex \@plus -1ex \@minus -.2ex}%
				{2.3ex \@plus.2ex}%
				{\normalfont\large\bfseries\boldmath}}
\renewcommand\subsection{\@startsection{subsection}{2}{\z@}%
				{-3ex\@plus -1ex \@minus -.2ex}%
				{1.5ex \@plus .2ex}%
				{\normalfont\normalsize\bfseries\boldmath}}
\renewcommand\subsubsection{\@startsection{subsubsection}{3}{\z@}%
				{-2.5ex\@plus -1ex \@minus -.2ex}%
				{1.5ex \@plus .2ex}%
				{\normalfont\normalsize\bfseries\boldmath}}
\renewcommand\paragraph{\@startsection{paragraph}{4}{\z@}%
				{2ex \@plus.5ex \@minus.2ex}%
				{-1em}%
				{\normalfont\normalsize\bfseries}}
\renewcommand\subparagraph{\@startsection{subparagraph}{5}{\parindent}%
				{2ex \@plus.5ex \@minus .2ex}%
				{-1em}%
				{\normalfont\normalsize\bfseries}}
		\newcommand{\blx@nowarnpolyglossia}{}
\newtheorem{theorem}{Theorem}[section]
\newtheorem{conjecture}[theorem]{Conjecture}
\newtheorem{lemma}[theorem]{Lemma}
\newtheorem{definition}[theorem]{Definition}
\newtheorem{question}[theorem]{Question}
\newtheorem{corollary}[theorem]{Corollary}
\theoremstyle{remark}
\newcommand{\bN}{\mathbb{N}}
\newcommand{\bS}{\mathbb{S}}
\newcommand{\bZ}{\mathbb{Z}}
\newcommand{\cD}{\mathcal{D}}
\newcommand{\cG}{\mathcal{G}}
\newcommand{\cH}{\mathcal{H}}
\newcommand{\cM}{\mathcal{M}}
\newcommand{\cO}{\mathcal{O}}
\newcommand{\cP}{\mathcal{P}}
\newcommand{\cT}{\mathcal{T}}
\title{Half-graphs, other non-stable degree sequences, and the switch Markov
chain}
\author{%
	Péter L. Erdős\footnote{These authors were supported in part by the National
	Research, Development and Innovation Office, NKFIH grants K-116769 and K-132930}
	\textsuperscript{,}\footnote{These authors were supported in part by the National
	Research, Development and Innovation Office, NKFIH grant KH-126853} \qquad
	Ervin Győri\textsuperscript{$*$} \qquad
	Tamás Róbert Mezei\textsuperscript{$*$,$\dagger$} \\
	István Miklós\textsuperscript{$\dagger$,}\footnote{IM was supported in part by
	the National Research, Development and Innovation Office, NKFIH grant
	SNN-116095} \qquad
	Dániel Soltész\textsuperscript{$\dagger$,}\footnote{DS was supported in part by
	the National Research, Development and Innovation Office, NKFIH grants
	K-120706 and KH-130371.}\\
	\small Alfréd Rényi Institute of Mathematics\\[-0.8ex]
	\small Reáltanoda street 13--15,\\[-0.8ex]
	\small H-1053 Budapest, Hungary \\
	\small\tt $<$erdos.peter,gyori.ervin,mezei.tamas.robert,\\[-0.8ex]
	\small\tt miklos.istvan,soltesz.daniel$>$@renyi.hu
}
\date{%
	\small Alfréd Rényi Institute of Mathematics (a Hungarian Academy
	of Sciences Centre of Excellence), Reáltanoda~u.\ 13--15, 1053
	Budapest, Hungary \\
	\vspace{10pt}
	\normalsize \today
	\vspace{-12pt}
}
\begin{document}

\maketitle

\begin{abstract}
	One of the simplest methods of generating a random graph with a given
	degree sequence is provided by the Monte Carlo Markov Chain method using
	\textbf{switches}. The switch Markov chain converges to the uniform
	distribution, but generally the rate of convergence is not known. After
	a number of results concerning various degree sequences, rapid mixing
	was established for so-called $P$-stable degree sequences (including
	that of directed graphs), which covers every previously known rapidly
	mixing region of degree sequences.

	\medskip

	In this paper we give a non-trivial family of degree sequences that are
	not $P$-stable and the switch Markov chain is still rapidly mixing on
	them. This family has an intimate connection to
	Tyshkevich-decompositions and strong stability as well.
\end{abstract}

\section{Introduction}

\subsection{Previous results on the rapid mixing of the switch Markov chain}
An important problem in network science is to sample simple graphs with a given
degree sequence (almost) uniformly. In this paper we study a \textbf{Markov
Chain Monte Carlo} (MCMC) approach to this problem. The MCMC method can be
successfully applied in many special cases. A vague description of this
approach is that we start from an arbitrary graph with a given degree sequence
and sequentially apply small random modifications that preserve the degree
sequence of the graph. This can be viewed as a random walk on the space of
\textbf{realizations} (graphs) of the given degree sequence. It is well-known
that after sufficiently many steps the distribution over the state space is
close to the uniform distribution. The goal is to prove that the necessary
number of steps to take (formally, the mixing time of the Markov chain) is at
most a polynomial of the length of the degree sequence.

In this paper we study the so-called \textbf{switch Markov chain} (also known as
the swap Markov chain). For clarity, we refer to the degree sequence of a
simple graph as an \textbf{unconstrained} degree sequence.

Throughout the paper, we work with finite graphs and finite degree sequences.
For two graphs $G_1,G_2$ on the same labelled vertex set, we define their
\textit{symmetric difference} $G_1 \triangle G_2$ with $V(G_1 \triangle
G_2)=V(G_1)=V(G_2)$ and $E(G_1 \triangle G_2)= E(G_1) \triangle E(G_2)$.

\begin{definition}[switch]\label{def:switch}
	For a bipartite or an unconstrained degree sequence $\bm{d}$, we say
	that two realizations $G_1, G_2 \in \cG(\bm{d})$ are connected
	by a \textit{switch}, if \[|E(G_1 \triangle G_2)|=4.\]
\end{definition}

A widely used alternative name for switch is \emph{swap}. A switch (swap) can be
seen in Figure~\ref{fig:switch}; for the precise definition of the switch Markov
chain, see Definition~\ref{def:ucChain}. Clearly, if $G_1$ and $G_2$ are two
simple graphs joined by a switch, then $F=E(G_1) \triangle E(G_2)$ is a cycle of
length four (a $C_4$), and $E(G_2)=E(G_1)\Delta F$. Hence, the term switch is
also used to refer to the operation of taking the symmetric difference with a
given $C_4$. It should be noted, though, that only a minority of $C_4$'s define
a (valid) switch. The majority of $C_4$'s do not preserve the degree sequence
(if the $C_4$ does not alternate between edges of $G_1$ and $G_2$), or they
introduce an edge which violates the constraints of the model (say, an edge
inside one of the color classes in the bipartite case).

\begin{figure}
\centering
\begin{tikzpicture}[scale=1.5]
\draw[color=gray] (1,1) edge[bend left=30] (0.4,0.6);
\draw[color=gray] (1,1) (1,1) edge[bend left=30] (0.5,0.5);
\draw[color=gray] (1,1) (1,1) edge[bend left=30] (0.6,0.4);
\draw[color=gray] (1,1) (1,2) edge[bend left=20] (0.4,2.5);
\draw[color=gray] (1,2) edge[bend right=30] (0.4,1.8);
\draw[color=gray] (2,2) edge[bend right=30] (2.5,2.5);
\draw[thick] (1,1) -- (1,2);
\draw[thick] (2,1) -- (2,2);
\draw[thick,densely dashed] (1,1) -- (2,1);
\draw[thick,densely dashed] (1,2) -- (2,2);

\draw [thick,->] (3,1.5) -- (3.75,1.5);

\draw[color=gray] (5,1) edge[bend left=30] (4.4,0.6);
\draw[color=gray] (5,1) edge[bend left=30] (4.5,0.5);
\draw[color=gray] (5,1) edge[bend left=30] (4.6,0.4);
\draw[color=gray] (5,2) edge[bend left=20] (4.4,2.5);
\draw[color=gray] (5,2) edge[bend right=30] (4.4,1.8);
\draw[color=gray] (6,2) edge[bend right=30] (6.5,2.5);
\draw[thick,densely dashed] (5,1) -- (5,2);
\draw[thick,densely dashed] (6,1) -- (6,2);
\draw[thick] (5,1) -- (6,1);
\draw[thick] (5,2) -- (6,2);

\foreach \x in {1,2}
\foreach \y in {1,2}
{%
	\node[draw,fill,circle,inner sep=2pt,minimum size] at (\x,\y) {};
	\node[draw,fill,circle,inner sep=2pt,minimum size] at (\x+4,\y) {};
}

\end{tikzpicture}
\caption{A switch (dashed lines emphasize missing edges)}\label{fig:switch}
\end{figure}
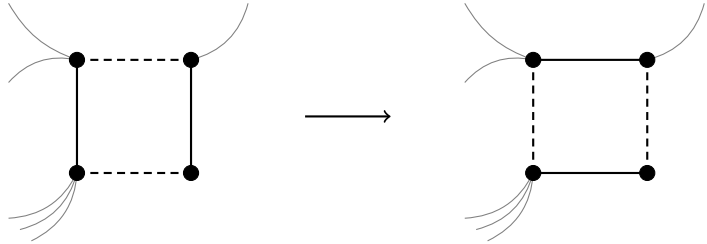

The question whether the mixing time of the switch Markov chain is short enough
is interesting from both a practical and a theoretical point of view (although
short enough depends greatly on the context). The switch Markov chain is already
used in applications, hence rigorous upper bounds on its mixing time are much
needed, even for special cases.

The switch Markov chain uses transitions which correspond to minimal
perturbations.  There are many other instances where the Markov chain of the
smallest perturbations have polynomial mixing time,
see~\cite{sinclair_algorithms_1993}. However, it is unknown whether the mixing
time of the switch Markov chain is uniformly bounded by a polynomial for every
(unconstrained) degree sequence. Hence from a theoretical point of view, even an
upper bound of $\cO(n^{10})$ on the mixing time of the switch Markov chain would
be considered a great success, even though in practice it is only slightly
better than no upper bound at all.

The present paper is written from a theoretical point of view and should be
considered as a step towards answering the following question.
\begin{question}[Kannan, Tetali, and Vempala~\cite{kannan_simple_1999}]
	Is the switch Markov chain rapidly mixing on the realizations of all
	graphic degree sequences?
\end{question}

$P$-stability was introduced by Jerrum and Sinclair for the Jerrum-Sinclair
chain, for whose rapid mixing the notion presents a natural
boundary~\cite{stefankovic_counting_2018}. Jerrum, Sinclair, and
McKay~\cite{jerrum_when_1989} already give an example for a non-$P$-stable
degree sequences which has a unique realization (trivially rapidly mixing): take
\begin{equation}\label{eq:JSMdegseq}
	(2n-1,2n-2,\ldots, n+1,n,n,n-1,\ldots,2,1)\in\bN^{2n}.
\end{equation}
In its unique realization, the first $n$ vertices form a clique, while the
remaining vertices form an independent set.

We will denote graphs with upper case letters (e.g.\ $G$), degree sequences
(which are non-negative integer vectors) with bold lower case letters (e.g.
$\bm{d}$).  Classes of graphs and classes of degree sequences are both denoted
by upper case calligraphic letters (e.g.  $\cH$). We say that a graph $G$ is a
realization of a degree sequence $\bm{d}$, if the degree sequence of $G$ is
$\bm{d}$. For a degree sequence $\bm{d}$, we denote the set of all realizations
of $\bm{d}$ by $\cG(\bm{d})$. The $\ell^1$-norm of a vector $x$ is denoted by
$\|x\|_1$.

\begin{definition}[Greenhill~and~Gao~\cite{gao_mixing_2020}]\label{def:Pstable}
	Let $\cD$ be a set of graphic degree sequences and $k\in 2\bN$. We say
	that $\cD$ is \textbf{$\bm{k}$-stable}, if there exists a polynomial
	$p\in \mathbb{R}\left[x\right]$ such that for any $n\in \bN$ and any
	degree sequence $\bm{d} \in \cD$ on $n$ vertices, any degree sequence
	$d'$ with $\|d'-d\|_1\le k$ satisfies $\left|\cG(\bm{d}')\right|\le
	p(n)\cdot \left|\cG(\bm{d})\right|$. The term \textbf{$\bm{P}$-stable}
	is an alias for $2$-stable, which is the least restrictive non-trivial
	class defined here.
\end{definition}

There is a long line of results where the rapid mixing of the switch Markov
chain is proven for certain degree sequences, see~\cite{cooper_sampling_2007,
miklos_towards_2013, greenhill_polynomial_2011, erdos_approximate_2015,
erdos_efficiently_2018, greenhill_switch_2018}.  Some of these results were
unified, first by Amanatidis and Kleer~\cite{amanatidis_rapid_2019}, who
established rapid mixing for so-called \emph{strongly stable} classes of degree
sequences of simple and bipartite graphs (definition given in
Section~\ref{sec:sstable}).

The most general result at the time of
writing is proved by Erdős, Greenhill, Mezei, Miklós, Soltész, and Soukup:
\begin{theorem}[\cite{erdos_mixing_2019}]\label{thm:rapidPstable}
	The switch Markov chain is rapidly mixing on sets of unconstrained,
	bipartite, and directed degree sequences that are
	\textbf{$\bm{P}$-stable} (see Definition~\ref{def:Pstable}).
\end{theorem}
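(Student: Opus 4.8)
The plan is to prove rapid mixing by the multicommodity flow method of Sinclair~\cite{sinclair_algorithms_1993}. Fix a degree sequence $\bm d$ on $n$ vertices belonging to the $P$-stable set, and write $N=|\cG(\bm d)|$. The switch chain is reversible with uniform stationary distribution, every state has at most $\cO(n^4)$ neighbours (so $Q(t)\ge \tfrac{1}{N}\cdot\cO(n^{-4})$ for every transition $t$), and $\ln N\le \cO(n^2)$; hence by Sinclair's bound it suffices to exhibit, for each ordered pair $(G,G')\in\cG(\bm d)^2$, a flow of $\pi(G)\pi(G')=N^{-2}$ units along paths of length $\cO(n^2)$ in the transition graph such that every transition $t=(Z,Z')$ carries the flow of at most $\mathrm{poly}(n)\cdot N$ pairs. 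I would present the bipartite case in full; the unconstrained case runs along the same lines using the circuit decomposition of non-bipartite symmetric differences, and the directed case reduces to a bipartite problem with a forbidden diagonal, with $P$-stability transported across the correspondence, so only the congestion estimate needs redoing there.

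To construct the paths, given $G,G'$ note that $G\triangle G'$ has all degrees even, hence decomposes into closed walks alternating between $E(G)\setminus E(G')$ and $E(G')\setminus E(G)$. Fixing once and for all a linear order on the vertices, split $G\triangle G'$ canonically into an ordered list of \emph{circuits} $W_1,\dots,W_m$ in the style of Cooper, Dyer and Greenhill~\cite{cooper_sampling_2007}, and process the circuits one at a time. While $W_j$ is being processed, the running realization $Z$ agrees with $G'$ on $W_1\cup\cdots\cup W_{j-1}$, with $G$ on $W_{j+1}\cup\cdots\cup W_m$, and differs from the ``clean'' intermediate state only along a short \emph{active segment} of $W_j$: each switch corrects two more edges of the symmetric difference while advancing the active segment, so the path has $\Theta(|E(G\triangle G')|)=\cO(n^2)$ switches and at every intermediate $Z$ the discrepancy from the clean state is confined to a bounded set of vertices.

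The congestion is controlled by an encoding argument. Fix a transition $t=(Z,Z')$. From $t$ and a pair $(G,G')$ routed through it, extract a bounded amount of auxiliary data $L$ — which circuit and which position along it is active, together with the $\cO(1)$ edge/non-edge modifications recording the current defect — and an \emph{encoding graph} $\widehat G$, obtained from the symmetric difference of $G$, $G'$ and $Z$ by repairing that defect. The standard reconstruction shows that $(\widehat G,L)$ together with $t$ determines $G\triangle G'$, and then the canonical circuit order determines $(G,G')$ itself; so $(G,G')\mapsto(\widehat G,L)$ is injective on the pairs through $t$. Since $L$ ranges over a $\mathrm{poly}(n)$-sized set, the number of such pairs is at most $\mathrm{poly}(n)$ times the number of admissible encoding graphs. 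The point of repairing the defect is that $\widehat G$ is a realization of some $\bm d'$ with $\|\bm d'-\bm d\|_1\le 2$; there are only $\cO(n^2)$ such $\bm d'$, each contributing at most $p(n)\cdot N$ realizations by $P$-stability (Definition~\ref{def:Pstable}). This bounds the number of pairs through $t$ by $\mathrm{poly}(n)\cdot N$, so the congestion of $t$ is $\mathrm{poly}(n)$, giving polynomial mixing time.

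The genuinely delicate point — and the one I expect to be the main obstacle — is arranging that the encoding graph really differs from a $\bm d$-realization by $\ell^1$-distance at most $2$, i.e.\ that \emph{$P$-stability}, and not a stronger stability notion, is what closes the count. A naive circuit-processing schedule produces defects of $\ell^1$-size up to $4$, which is essentially why earlier unifications needed strong stability or $k$-stability with $k>2$ (cf.~\cite{amanatidis_rapid_2019}). The resolution is twofold: (i) process each circuit by a carefully chosen sequence of switches so that only a single active vertex pair ever carries a defect; and (ii) at the few transitions where even this leaves a size-$4$ discrepancy — typically the first and last switch of a circuit, or where two circuits meet — split the injected flow over an auxiliary combinatorial choice so that each flow-carrying encoding again lies within distance $2$ of a realization. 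Checking that this refined path-and-flow system is self-consistent — that reconstruction still succeeds and that the flow splits legitimately (every carried unit is accounted for, with no transition overloaded) — is the real technical content; the Sinclair computation, the $\cO(n^2)$ length bound, and the reductions for the unconstrained and directed cases are then routine.
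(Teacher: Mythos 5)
The paper you are working from does not actually prove Theorem~\ref{thm:rapidPstable}; it cites it as a black box from~\cite{erdos_mixing_2019} and uses it only as background to motivate going beyond $P$-stability. So there is no internal proof to compare your proposal against.

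Assessed against the cited reference, your outline does capture the correct architecture: Sinclair's multicommodity-flow/canonical-path machinery, a circuit decomposition of $G\triangle G'$ processed one circuit at a time with a small active defect, and an encoding argument in which the ``repaired'' auxiliary graph $\widehat G$ realizes a degree sequence within $\ell^1$-distance $2$ of $\bm d$, so that $P$-stability (rather than strong or $k$-stability) bounds the number of encodings by $\mathrm{poly}(n)\cdot N$. You also correctly identify where the difficulty lives: a naive circuit-processing schedule leaves defects of $\ell^1$-size $4$, and preventing that is the genuine content of~\cite{erdos_mixing_2019}. But this is exactly where your argument stops being a proof. Your proposed fixes (i) and (ii) are stated as intentions --- ``process each circuit by a carefully chosen sequence of switches'' and ``split the injected flow over an auxiliary combinatorial choice'' --- and you yourself flag the verification as ``the real technical content.'' In particular you never specify the switch order within a circuit, never prove that the defect at every intermediate step really is confined to a single pair of vertices of total $\ell^1$-deviation $\le 2$, never define the flow-splitting rule at the problematic transitions, and never show that after splitting the injectivity of $(G,G')\mapsto(\widehat G,L)$ still holds for each branch. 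Without those, the congestion bound is asserted rather than established, and the step from ``strong stability suffices'' to ``$P$-stability suffices'' --- which is the entire point of the theorem --- remains open. Similarly, the directed case is dismissed as ``a bipartite problem with a forbidden diagonal'' but the forbidden-edge constraint changes which switches are legal and hence which defects can be repaired; handling this requires its own analysis (the reference treats it as a separate, nontrivial case). As it stands, this is a good map of the terrain rather than a proof.
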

For the sake of being less redundant, the phrase ``$\cD$ is rapidly mixing''
shall carry the same meaning as ``switch Markov chain is rapidly mixing on
$\cD$''.

Recently, Greenhill and Gao~\cite{gao_mixing_2020} presented elegant conditions
which when satisfied ensure $8$-stability of a class of degree sequences
($8$-stable degree sequence are by definition $P$-stable). In particular, they
show that for $\gamma>2$, power-law distributed degree sequences are
$8$-stable, hence rapidly mixing. They also give a proof that $8$-stable sets
of degree sequence are rapidly mixing.

In this paper we try to extend the set of rapidly mixing bipartite degree
sequences beyond $P$-stability. The degree sequence~\eqref{eq:JSMdegseq} can
naturally be turned into a bipartite one by assigning the role of the two color
classes to the clique and the independent set, and then removing the edges of
the clique.
\begin{definition}
	Let us define a bipartite degree sequence:
	\begin{align*}
		\bm{h}_0(n)&:=
		\left(
			\begin{array}{ccccccc}
			1 & 2 & 3 & \cdots & n-2 & n-1 & n \\
			n & n-1 & n-2 & \cdots & 3 & 2 & 1 \\
			\end{array}
		\right)\\
		\cH_0&:=\left\{\bm{h}_0(n)\ \big|\ n\in\bN\right\}
	\end{align*}
	Let $A_n=\{a_1,\ldots,a_n\}$ and $B_n=\{b_1,\ldots,b_n\}$, often denoted
	simply $A$ and $B$. We label the vertices of $\bm{h}_0(n)$ such that $A$
	is the first and $B$ is the second color class, with
	$\deg_{\bm{h}_0(n)}(a_i)=n+1-i$ and $\deg_{\bm{h}_0(n)}(b_i)=i$ for
	$i\in [1,n]$. The unique realization $H_0(n)$, also known as the
	half-graph, is displayed on Figure~\ref{fig:halfgraph}.

	\begin{figure}[H]
	\centering
	\begin{tikzpicture}[scale=1.7]
	\tikzstyle{vertex}=[draw,circle,fill=black,minimum size=3,inner sep=0]
	\node[vertex] (y5) at (0,0) [label=south:{$a_1$}] {};
	\node[vertex] (y4) at (1,0) [label=south:{$a_2$}] {};
	\node[vertex] (y3) at (4,0) [label=south:{$a_i$}] {};
	\node[vertex] (y2) at (7,0) [label=south:{$a_{n-1}$}] {};
	\node[vertex] (y1) at (8,0) [label=south:{$a_n$}] {};
	\node[vertex] (x5) at (8.65,1) [label=north:{$b_{n}$}] {};
	\node[vertex] (x4) at (7.65,1) [label=north:{$b_{n-1}$}] {};
	\node[vertex] (x3) at (4.65,1) [label=north:{$b_{i}$}] {};
	\node[vertex] (x2) at (1.65,1) [label=north:{$b_2$}] {};
	\node[vertex] (x1) at (0.65,1) [label=north:{$b_1$}] {};

	\draw (y5)--(x1);
	\draw (y5)--(x2);
	\draw (y5)--(1,0.47);
	\draw (y5)--(1,0.35);
	\draw[dashed] (y4)--(x1);
	\draw (y4)--(x2);
	\draw (y4)--(2,0.5);
	\draw (y4)--(2,0.4);
	\draw[dashed] (x1)--(1.35,0.7);
	\draw[dashed] (x1)--(1.3,0.5);
	\draw[dashed] (x2)--(2.35,0.7);
	\draw[dashed] (x2)--(2.3,0.5);

	\draw (x3)--(y3);
	\draw[dashed] (x3)--(5.2,0.35);
	\draw[dashed] (x3)--(5.3,0.6);
	\draw[dashed] (x3)--(5.35,0.75);
	\draw (x3)--(4,0.4);
	\draw (x3)--(3.95,0.6);
	\draw (x3)--(3.9,0.7);
	\draw (y3)--(4.6,0.3);
	\draw (y3)--(4.5,0.6);
	\draw (y3)--(4.55,0.45);
	\draw[dashed] (y3)--(3.5,0.6);
	\draw[dashed] (y3)--(3.45,0.45);
	\draw[dashed] (y3)--(3.4,0.3);

	\draw (x5)--(y1);
	\draw (x5)--(y2);
	\draw (x5)--(8,0.7);
	\draw (x5)--(8,0.8);
	\draw[dashed] (x4)--(y1);
	\draw (x4)--(y2);
	\draw (x4)--(7,0.7);
	\draw (x4)--(7,0.6);
	\draw[dashed] (y1)--(7.5,0.7);
	\draw[dashed] (y1)--(7.5,0.5);
	\draw[dashed] (y2)--(6.4,0.75);
	\draw[dashed] (y2)--(6.4,0.5);
	\end{tikzpicture}
	\caption{The unique realization $H_0(n)$ of $\bm{h}_0(n)$ is isomorphic
	to the half-graph.}%
	\label{fig:halfgraph}
	\end{figure}
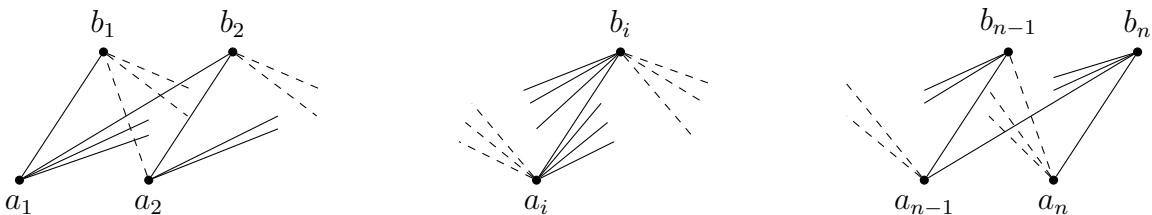
\end{definition}
In this paper, we conduct a detailed study of $\bm{h}_0(n)$ and its
neighborhood. Before presenting our main results, let us get familiar with two
interesting properties of $\bm{h}_0(n)$.

\subsection{Simple examples for rapidly mixing non-stable bipartite
classes}\label{sec:examples}

Let $\mathds{1}_{x}$ be the vector which takes 1 on $x$ and zero everywhere
else.  As solving an easy linear recursion in
Corollary~\ref{cor:orderofmagnitude} shows,
$\bm{h}_0(n)-\mathds{1}_{a_1}-\mathds{1}_{b_n}$ has
\begin{equation*}
	\varTheta\left({\left(\frac{1+\sqrt{5}}{2}\right)}^n\right)
\end{equation*}
realizations, therefore $\cH_0$ is not $P$-stable.

Although $\bm{h}_0(n)$ seems very pathological as an example for a non-stable
degree sequence, it is a source of more interesting examples. As pointed out to
us by an anonymous reviewer, one may replace $a_{i}b_{i}$ by a pair of
independent edges simultaneously for all $i$: let us define
\begin{align*}
	\bm{g}(n):=
	\left(
		\begin{array}{ccccccc}
		1 & 1 & 3 & 3 & \cdots & 2n-1 & 2n-1 \\
		2n-1 & 2n-1 & 2n-3 & 2n-3 & \cdots & 1 & 1 \\
		\end{array}
	\right).
\end{align*}
The number of realizations of $\bm{g}(n)$ is $2^n$, because the two independent
edges replacing an edge $a_{i}b_{i}$ can be switched with the two induced
non-edges. In addition, every realization of $\bm{g}(n)$ can be obtained this
way, so for any realization of $\bm{g}(n)$ the previously mentioned $n$ switches
are the complete set of switches. Because the random-walk on a hypercube is
rapidly mixing, the switch Markov chain is rapidly mixing on $\{ \bm{g}(n)\ |\
n\in\bN\}$. Moreover, by solving yet another a linear recursion, one can verify
that $\{\bm{g}(n)\ |\ n\in\bN \}$ is not $P$-stable.

In Section~\ref{sec:compnonstable}, we will draw the curtain on the explanation
behind the behavior of $\bm{h}_0(n)$ and $\bm{g}(n)$. In the meantime, we
present the main results of the paper.

\subsection{Results}

If $\bm{d}$ is the degree sequence of the bipartite graph $G[A,B]$, then
$\bm{d}=(\bm{d}^A;\bm{d}^B)$ is split across the bipartition as well, and it is
called a splitted bipartite degree sequence. We say that $G[A,B]$ is the empty
bipartite graph if both $A=B=\emptyset$.

\begin{definition}
	For a set $\mathcal{D}$ of bipartite degree sequences, let
	\begin{align*}
		B_{2k}(\cD)&=\bigcup_{\bm{d}\in\cD}
		\left\{ \bm{e}:\operatorname{Dom}(\bm{d})\to\bN\ \Big|
		\ {\|d-e\|}_1\le 2k,\ \|e^A\|_1=\|e^B\|_1 \right\} \\
		\bS_{2k}(\cD)&=\bigcup_{\bm{d}\in\cD}
		\left\{ \bm{e}:\operatorname{Dom}(\bm{d})\to\bN\ \Big|
		\ {\|d-e\|}_1=2k,\ \|e^A\|_1=\|e^B\|_1 \right\}
	\end{align*}
	be the (closed) ball and sphere of radius $2k$ around $\cD$ (w.l.o.g.\
	$k\in \bN$). The requirement that $\|e^A\|_1=\|e^B\|_1$, i.e., that the
	sum of the degrees on the two sides be equal is necessary for
	graphicality.
\end{definition}

We will show in Section~\ref{sec:main} that neighborhoods of
$\cH_0=\left\{\bm{h}_0(n)\ |\ n\in\bN\right\}$ are rapidly mixing.
\begin{theorem}\label{thm:main}
	For any fixed $k$, the switch Markov chain is rapidly mixing on the
	bipartite degree sequences in $B_{2k}(\cH_0)$.
\end{theorem}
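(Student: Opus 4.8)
The goal is to show rapid mixing on $B_{2k}(\cH_0)$ for fixed $k$. The natural strategy is to leverage Theorem~\ref{thm:rapidPstable}: although $\cH_0$ itself is not $P$-stable, one hopes that $B_{2k}(\cH_0)$ decomposes (via Tyshkevich-type splits) into pieces that *are* $P$-stable, or more precisely that the realizations of any $\bm{d}\in B_{2k}(\cH_0)$ have a rigid ``half-graph core'' plus a small perturbed part whose behavior is controlled by a stable subproblem. So the first step is structural: take $\bm{d}\in B_{2k}(\cH_0)$, so $\bm{d}=\bm{h}_0(n)+\bm{\delta}$ with $\|\bm{\delta}\|_1\le 2k$. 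I would prove that in any realization $G$ of $\bm{d}$, all but $O(k)$ of the vertices behave exactly as in the half-graph: the high-degree $a_i$'s are still adjacent to ``almost everything'' and the low-degree ones to ``almost nothing,'' because the half-graph realization is forced by a tight Gale--Ryser/Erdős--Gallai condition and a bounded perturbation can only disturb a bounded window near the ``diagonal'' $a_ib_i$. Concretely I expect a lemma of the form: there is a set $S$ of vertices, $|S|=O(k)$, straddling the diagonal, such that every realization of $\bm{d}$ restricted to $V\setminus S$ is the corresponding half-graph, and the edges between $S$ and $V\setminus S$ are also essentially forced. This is the analogue of the Tyshkevich decomposition: $G$ splits into a dominating/forced half-graph part and a small ``free'' bipartite graph on $S$ whose degree sequence $\bm{d}_S$ ranges over a $P$-stable (indeed finite, up to the $O(k)$ window) family.

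**Reducing to a stable subproblem and using multicommodity flow.** Once the decomposition is in hand, the realization graph $\cG(\bm{d})$ is (up to a bounded-degree correspondence) a product-like structure: a rigid skeleton times $\cG(\bm{d}_S)$ for the bounded ``free'' part, possibly fibered over a few choices for how $S$ attaches to the skeleton. The plan is then to run the standard multicommodity-flow machinery of Sinclair, as adapted in \cite{erdos_mixing_2019} for the $P$-stable case: define canonical paths in $\cG(\bm{d})$ between any two realizations by first routing within the fiber (resolving the forced edges, which costs only $O(k)$ switches since only $O(k)$ edges can differ there) and then routing inside $\cG(\bm{d}_S)$ using the canonical paths guaranteed by Theorem~\ref{thm:rapidPstable} for the stable family containing $\bm{d}_S$. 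The congestion bound then factors: the skeleton contributes a bounded factor (it is traversed by a bounded-length prefix/suffix of each path), and the $\cG(\bm{d}_S)$-part contributes the polynomial congestion from the $P$-stable result, while $|\cG(\bm{d})|/|\cG(\bm{d}_S)|$ is bounded since the skeleton is essentially unique. The examples $\bm{g}(n)$ and the Fibonacci-count computation for $\bm{h}_0(n)-\mathds{1}_{a_1}-\mathds{1}_{b_n}$ in the excerpt are exactly the warning that the family is not $P$-stable globally — but they also illustrate that the non-stability comes entirely from a product of small, independently-switchable gadgets, which is precisely what makes the flow argument go through.

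**The main obstacle.** The hard part is the structural lemma: making precise and proving that a bounded $\ell^1$-perturbation of $\bm{h}_0(n)$ forces all realizations to agree with the half-graph outside a bounded window, and controlling how the ``free'' part $S$ can attach to the skeleton. The half-graph is the extremal realization of its degree sequence (every Erdős--Gallai/Gale--Ryser inequality is tight), so intuitively there is no slack; but a perturbation of size $2k$ could in principle ``cascade'' — shifting one edge near $a_1$ might be compensated far away near $a_n$. I would need to show no such long-range cascade survives: a careful Gale--Ryser threshold analysis should show that the $i$-th partial-sum inequality has slack growing linearly away from the diagonal, so a total perturbation of $2k$ can only be absorbed within $O(k)$ of the diagonal. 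A secondary technical point is that $B_{2k}(\cH_0)$ may contain degree sequences that are *not* near the diagonal case — e.g., perturbations that merge the half-graph with a genuinely different but still $P$-stable piece; one must verify that the resulting subproblem $\bm{d}_S$ always lands in a family to which Theorem~\ref{thm:rapidPstable} or a strong-stability result applies, and that the Tyshkevich split is compatible with switches (no switch crosses the forced/free boundary in a way that escapes the canonical-path bookkeeping). Handling these boundary cases uniformly in $n$, with the polynomial degree of the mixing-time bound depending only on $k$, is where most of the real work will lie.
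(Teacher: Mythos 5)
Your proposed structural lemma is false, and this is a fatal gap. You claim that for any $\bm{d}\in B_{2k}(\cH_0)$ there is a bounded set $S$ of $O(k)$ vertices ``straddling the diagonal'' such that every realization of $\bm{d}$ agrees with the half-graph $H_0(n)$ on $V\setminus S$, with the $S$-to-$(V\setminus S)$ edges essentially forced. Take the simplest nontrivial case, $\bm{d}=\bm{h}_1(n)=\bm{h}_0(n)-\mathds{1}_{a_1}-\mathds{1}_{b_n}$. By the paper's flow representation (Section~\ref{sec:flow}, Corollary~\ref{cor:equivalence}), realizations of $\bm{h}_1(n)$ are in bijection with directed $a_1\to b_n$ paths in the network $F_n$, and $G\triangle H_0(n)$ is exactly the underlying edge set of that path. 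Such a path necessarily visits vertices with indices ranging from $1$ to $n$, so $G$ differs from $H_0(n)$ at vertices spread across the whole range $[1,n]$, not inside a bounded window. This is also forced by counting: Corollary~\ref{cor:orderofmagnitude} gives $\Theta\bigl(((3+\sqrt5)/2)^n\bigr)$ realizations of $\bm{h}_1(n)$; if all realizations agreed with $H_0(n)$ outside a window of $O(k)$ vertices, the count would be bounded by a constant depending only on $k$. The ``long-range cascade'' you worry about and hope to rule out is exactly what happens and is the source of the exponential count, so a Gale--Ryser slack argument cannot eliminate it.

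Because the structural lemma fails, the intended reduction to Theorem~\ref{thm:rapidPstable} via a bounded $P$-stable subproblem also fails (and indeed the whole point of Theorems~\ref{thm:main} and~\ref{thm:nonstable} together is that such a reduction cannot work, since $B_{2k}(\cH_0)$ is genuinely outside the $P$-stable regime). The correct local structural fact, which the paper uses, is much weaker: writing $\vec\nabla(G)=G\triangle H_0(n)$ as a flow, at most $k$ of its edges cross any ``vertical'' cut $U_i=A_i\cup B_i$. The realizations are thus globally spread out but locally thin. The paper exploits this not by a decomposition into rigid plus free parts, but by a ``peristaltic buffer'' canonical-path construction: milestones $T_{X,Y}[i+1,i+z]$ (Lemma~\ref{lem:buffers}, proved via a Menger/cut-condition argument on an auxiliary flow network) agree with $Y$ on $U_i$ and with $X$ on $\overline{U}_{i+z}$, with $z=O(k)$, and the buffer slides from left to right. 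Congestion is then controlled by encoding each intermediate realization $Z_i$ with the mirror milestone $T_{Y,X}[i+1,i+3k+1]$ plus two constant-size left-compressed subgraphs and the position $i$. If you want to salvage your approach, you must replace the false global rigidity lemma by this local (per-cut) bound of $k$ on the flow, and you will then essentially be forced into a moving-window argument of the paper's type rather than a reduction to a $P$-stable core.
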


Next, we show that even though balls of constant size around $\cH_0$ are rapidly
mixing, $\bS_{2k}(\cH_0)$ contains a degree sequence which is not $P$-stable.

\begin{definition}
	For all $k,n\in\bN$ where $k<n$ let
	\begin{align*}
		\bm{h}_k(n)&:=\bm{h}_0(n)-k\cdot\mathds{1}_{a_1}-k\cdot\mathds{1}_{b_n}\\
		\quad\cH_k &:=
		\left\{\bm{h}_k{(n)}\ |\ k\le n\in\bN^+ \right\}
	\end{align*}
	be a bipartite degree sequence and a class of bipartite degree
	sequences, respectively.
\end{definition}

\begin{theorem}\label{thm:nonstable}
	The class of degree sequences $\cH_k$ is not $P$-stable for any
	$k\in\bN$.
\end{theorem}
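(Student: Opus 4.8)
The plan is to reduce the $P$-stability of $\cH_k$ to a counting estimate on the number of realizations of a degree sequence very close to $\bm{h}_k(n)$, mimicking the argument already sketched in Section~\ref{sec:examples} for $\cH_0$. Recall that $\bm{h}_k(n)$ is obtained from the half-graph degree sequence $\bm{h}_0(n)$ by lowering the degree of the extremal vertices $a_1$ (which had the maximum degree $n$) and $b_n$ (which also had degree $n$) by $k$ each. First I would observe that $\bm{h}_k(n)$ still has an essentially rigid ``core'': the vertices $a_2,\ldots,a_n$ and $b_1,\ldots,b_{n-1}$ carry the half-graph degree pattern, so in any realization the high-degree vertices are still forced to be adjacent to almost everything on the other side, and the low-degree vertices to almost nothing. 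The only freedom comes from how the neighborhoods of $a_1$ and $b_n$ interact with this core.

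Next I would pick the perturbed degree sequence $\bm{d}' := \bm{h}_k(n) + \mathds{1}_{a_1} + \mathds{1}_{b_1}$ (or a similarly chosen shift of total $\ell^1$-distance $2$ from $\bm{h}_k(n)$ that keeps the two side-sums equal), chosen so that $\bm{d}'$ is graphic and has exponentially many realizations. The key step is to show $|\cG(\bm{d}')| = 2^{\varOmega(n)}$, or more precisely to set up a linear recursion — exactly as promised by ``solving yet another linear recursion'' and by Corollary~\ref{cor:orderofmagnitude} — whose solution grows like $c^n$ for some constant $c>1$. The recursion arises by peeling off the pair $a_i, b_i$ of ``matched'' vertices one at a time from the outside in: at each stage the vertex of degree $i$ on one side and degree $n+1-i$ on the other either behaves exactly as in the half-graph or admits one additional local alternative, and the small surplus introduced by the $+\mathds{1}_{a_1}+\mathds{1}_{b_1}$ perturbation propagates inward, creating a binary choice at each level. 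Meanwhile $|\cG(\bm{h}_k(n))|$ is only polynomial in $n$ (indeed for the relevant range it is bounded, since the $k$-fold reduction at two vertices still leaves a nearly unique realization), so no polynomial $p$ can satisfy $|\cG(\bm{d}')| \le p(n)\,|\cG(\bm{h}_k(n))|$ for all $n$.

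Concretely, the steps in order: (1) for fixed $k$, describe the structure of an arbitrary realization of $\bm{h}_k(n)$ and show there are at most polynomially (in fact boundedly) many of them; (2) exhibit a graphic $\bm{d}'$ with $\|\bm{d}'-\bm{h}_k(n)\|_1 = 2$; (3) encode realizations of $\bm{d}'$ by a sequence of local choices and write down the resulting linear recurrence for $r_n := |\cG(\bm{d}')|$; (4) verify the characteristic root exceeds $1$, so $r_n = \varTheta(c^n)$; (5) conclude that $\cH_k$ violates Definition~\ref{def:Pstable} with $k=2$, hence is not $P$-stable (and a fortiori not $2$-stable). Note that the quantifier in the statement is ``for any $k\in\bN$,'' so $k$ is a fixed parameter throughout and all estimates may depend on it; only the dependence on $n$ matters.

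The main obstacle I anticipate is step (3): correctly bookkeeping the realizations of $\bm{d}'$ so that the recurrence is clean. The half-graph is rigid precisely because the degree sequence is ``tight'' at every threshold; once we perturb, one must check that the extra unit of freedom genuinely doubles (rather than merely additively enlarges) the realization count at each inductive level, and that the choices at different levels are independent — i.e., that no global parity or sum constraint collapses the count back to polynomial. Handling the interaction of the two perturbed vertices $a_1$ and $b_n$ with the reduction by $k$ (so that $\bm{d}'$ is still graphic and the recursion does not degenerate near the boundary $i \approx k$) is the delicate part; everything else is the routine linear-algebra of solving a constant-coefficient recurrence.
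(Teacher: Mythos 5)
The proposal has a genuine gap at step (1). You assert that $|\cG(\bm{h}_k(n))|$ is ``only polynomial in $n$ (indeed \dots bounded)'' because the $k$-fold reduction ``still leaves a nearly unique realization.'' This is false for every $k\ge 1$: already Corollary~\ref{cor:orderofmagnitude} shows $|\cG(\bm{h}_1(n))| = \varTheta\bigl(((3+\sqrt{5})/2)^n\bigr)$, and more generally the flow representation identifies realizations of $\bm{h}_k(n)$ with integer $k$-flows from $a_1$ to $b_n$ in $F_n$, whose number grows exponentially in $n$ with a base depending on $k$. So the quantity you hope to divide by is itself exponential, not polynomial, and showing that $|\cG(\bm{d}')|$ is exponential does not by itself violate Definition~\ref{def:Pstable}.

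The missing idea is a comparison of exponential growth \emph{rates}. The paper takes $\bm{d}' = \bm{h}_{k+1}(n)$ (distance $2$ from $\bm{h}_k(n)$) and reduces the problem to showing that the Perron eigenvalue $r_{k+1}$ of a ``type matrix'' $\cP_{k+1}$ governing $k{+}1$-flows is strictly larger than the Perron eigenvalue $r_k$ of $\cP_k$; then $|\cG(\bm{h}_{k+1}(n))|/|\cG(\bm{h}_k(n))| = \varTheta\bigl((r_{k+1}/r_k)^n\bigr)$ is superpolynomial. Establishing the strict inequality $r_k < r_{k+1}$ requires a nontrivial Perron--Frobenius argument (primitivity of $\cP_k$, embedding $\cP_k$ as a dominated submatrix of $\cP_{k+1}$, and a boundedness contradiction). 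Your proposal never confronts this comparison, so even if your ``doubling at each level'' recursion for $\bm{d}'$ were made rigorous, the conclusion would not follow. The argument you sketch is essentially correct only in the base case $k=0$, where $\bm{h}_0(n)$ has a unique realization.
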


\subsection{Outline}
The rest of the paper is organized as follows.
\begin{itemize}
	\item As promised at the end of Section~\ref{sec:examples}, we
		introduce the Tyshkevich-decomposition of bipartite graphs in
		Section~\ref{sec:tyshkevich}. We also expose a connection to
		strong stability which provides further motivation to studying
		$\bm{h}_0(n)$.
	\item In Section~\ref{sec:markov} we introduce the switch Markov chains, some
		related definitions, and Sinclair's result on mixing time.
	\item Section~\ref{sec:flow} describes the structure of realizations
		of degree sequences from $B_{2k}(\bm{h}_0)$, which is then used
		by Sections~\ref{sec:main}~and~\ref{sec:nonstable} to prove
		Theorems~\ref{thm:main}~and~\ref{thm:nonstable}, respectively.
	\item Section~\ref{sec:conclusion} describes how $\bm{h}_0(n)$ relates
		to previous research. Possible generalization of
		Theorem~\ref{thm:main} are conjectured.
\end{itemize}

\section{Properties of Tyshkevich-decompositions}\label{sec:tyshkevich}
\subsection{Tyshkevich-decomposition of bipartite graphs}\label{sec:bip}

Let $G$ be a simple graph. It is a split graph if there is a partition
$V(G)=A\uplus B$ ($A\neq \emptyset$ or $B\neq\emptyset$) such that $A$ is a
clique and $B$ is an independent set in $G$. Split graph were first studied by
Földes and Hammer~\cite{foldes_split_1977}, who determined that being split is a
property of the degree sequence $\bm{d}$ of $G$. Note, that the partition is
not necessarily unique, but the size of $A$ is determined up to a $+1$ additive
constant, see~\cite{hammer_splittance_1981}. A split graph endowed with the
partition is called a splitted graph, denoted by $(G,A,B)$. In addition
to~\cite{foldes_split_1977}, Tyshkevich and
Chernyak~\cite{tyshkevich_decomposition_1985} also determined that being split
is a property of the degree sequence, thus every realization of a split degree
sequence is a split graph.

Tyshkevich and co-authors have extensively studied a composition operator
$\circ$ on (split) graphs; these results are nicely collected
in~\cite{tyshkevich_decomposition_2000}. The composition $(G,A,B)\circ H$ takes
the disjoint union of split graph and a simple graph, and joins every vertex in
$A$ to every vertex of $H$. A fundamental result on this operator is that any
simple graph can be uniquely decomposed into the composition of split graphs
and possibly an indecomposable simple graph as the last factor. During the
writing of this paper, we were greatly saddened to learn that Professor
Tyshkevich passed away November, 2019

Let us slightly change the conventional notation $G[A,B]$ to also signal that
the color classes $A$ and $B$ are ordered (2-colored); to emphasize this, we may
refer to such graphs as \emph{splitted} bipartite graphs. Observe, that a
function $\Psi$ removing the edges of the clique on $A$ from $(G,A,B)$ produces
a splitted bipartite graph $G[A,B]$. Erdős, Miklós, and
Toroczkai~\cite{erdos_new_2018} adapted the results about split graphs and the
composition operator $\circ$ to splitted bipartite graphs via the bijection
given by $\Psi$.

\begin{definition}\label{def:bipcomp}
	Given two splitted bipartite graphs $G[A,B]$ and $H[C,D]$ with disjoint
	vertex sets, we define their (Tyshkevich-)\:composition $G[A,B]\circ
	H[C,D]$ as the bipartite graph
	\[
	G[A,B]\circ H[C,D]:=G[A,B]\cup H[C,D]+\{ ad\ |\ a\in A,\ d\in D \}.
	\]
\end{definition}

The $\circ$ operator is clearly associative, but not commutative. We say that a
bipartite graph is \emph{indecomposable} if it cannot be written as a
composition of two \emph{non-empty} bipartite graphs.

\begin{lemma}[\cite{erdos_new_2018}, adapted from Theorem
	2(i) in~\cite{tyshkevich_decomposition_2000}]\label{lem:bipindecomp}
	Let $G[A,B]$ be a bipartite graph with degree sequence $d=(d^A,d^B)$,
	where both $d^A$ and $d^B$ are in non-increasing order. Then $G[A,B]$ is
	decomposable if and only if there exists $p,q\in \bN$ such that
	$0<p+q<|A|+|B|$, $0\le p\le |A|$, $0\le q\le |B|$, and
	\begin{equation}\label{eq:bipindecomp}
		\sum_{i=1}^p d^A_i= p(|B|-q)+\sum_{|B|-q+1}^{|B|} d^B_i.
	\end{equation}
\end{lemma}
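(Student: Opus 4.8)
\textbf{Proof plan for Lemma~\ref{lem:bipindecomp}.}

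The plan is to translate the combinatorial statement ``$G[A,B]$ is decomposable'' into the arithmetic condition~\eqref{eq:bipindecomp} by analyzing the structure a composition forces on the bipartite adjacency matrix, and then invoke a Gale--Ryser-type characterization to see that the degree sequence alone controls this structure. First I would set up the correspondence: if $G[A,B]=G_1[A_1,B_1]\circ G_2[A_2,B_2]$ with both factors non-empty, then with $p:=|A_1|$ and $|B|-q:=|B_1|$ (so $q=|B_2|$), Definition~\ref{def:bipcomp} says every vertex of $A_1$ is adjacent to every vertex of $B_2$, while no vertex of $A_2$ is adjacent to any vertex of $B_1$. Summing degrees over $A_1$ and counting edges by which block of the bipartition they land in, the edges from $A_1$ to $B_2$ contribute exactly $p\cdot|B_2|=p(|B|-q)$ \emph{wait}, I need to be careful with the indexing: with $d^A$ in non-increasing order the top-$p$ vertices $a_1,\dots,a_p$ should be the ones in $A_1$ (they see all of $B$ minus possibly $B_1$), and the bottom-$q$ vertices of $B$, namely $b_{|B|-q+1},\dots,b_{|B|}$, are the ones in $B_2$; this matches the index ranges in~\eqref{eq:bipindecomp}. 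So $\sum_{i=1}^p d^A_i$ counts edges from $A_1$: those going to $B_2$ number $p(|B|-q)$ (complete bipartite), and those going to $B_1$ are exactly all edges of $G_1$ incident to $B_1$, which is $\sum_{j=1}^{|B_1|} d^{B_1}_j$; but since $A_2$ sends nothing to $B_1$, the $G$-degree of each $b_j\in B_1$ equals its $G_1$-degree, giving $\sum_{i=|B|-q+1}^{|B|} d^B_i$ \emph{after re-indexing} — here I'd double-check that $B_1$ corresponds to the \emph{first} $|B_1|=|B|-q$ coordinates of $d^B$, which requires knowing that $B_1$-vertices have the larger $B$-degrees; this monotonicity is where the non-increasing ordering hypothesis earns its keep, and I'd verify it by noting a $B_1$-vertex of small degree could be swapped past an $A_2$-adjacent $B_2$-vertex.

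The converse is the substantive direction and I expect it to be the main obstacle. Given $p,q$ satisfying~\eqref{eq:bipindecomp} together with the range constraints, I want to conclude that \emph{every} realization of $d$ decomposes along the cut $(\{a_1,\dots,a_p\}\cup\{b_{|B|-q+1},\dots,b_{|B|}\}\mid \text{rest})$. The key observation is an extremality/rigidity argument: equation~\eqref{eq:bipindecomp} asserts that $\sum_{i=1}^p d^A_i$ attains the maximum possible number of edges incident to $\{a_1,\dots,a_p\}$ consistent with $b_1,\dots,b_{|B|-q}$ having degrees $d^B_1,\dots,d^B_{|B|-q}$ — more precisely, the term $p(|B|-q)$ is the count if $a_1,\dots,a_p$ were \emph{completely} joined to $b_{|B|-q+1},\dots,b_{|B|}$, and $\sum_{i=|B|-q+1}^{|B|}d^B_i$ is then forced to be the total degree of those bottom-$q$ $B$-vertices, leaving nothing for $A_2=\{a_{p+1},\dots\}$ to contribute to them. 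So I would argue: the number of $G$-edges between $A_1:=\{a_1,\dots,a_p\}$ and $B_2:=\{b_{|B|-q+1},\dots,b_{|B|}\}$ is at most $\min\bigl(p\,q,\ \sum_{i=1}^p d^A_i,\ \sum_{i=|B|-q+1}^{|B|} d^B_i\bigr)$ trivially, but also the edges from $A_1$ to $B_1$ are at most $\sum_{j=1}^{|B|-q} d^B_j$, hence $\sum_{i=1}^p d^A_i \le \sum_{j=1}^{|B|-q} d^B_j + (\text{edges }A_1\text{ to }B_2)$; combined with edges$(A_1,B_2)\le p q$ and the edges from $A_2$ to $B_1$ being $\ge 0$, equation~\eqref{eq:bipindecomp} squeezes both of these slack inequalities to equalities. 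Equality forces $A_1$ completely joined to $B_2$ and $A_2$ completely non-adjacent to $B_1$, which is precisely the statement that $G[A,B]=\bigl(G[A_1,B_1]\bigr)\circ\bigl(G[A_2,B_2]\bigr)$ with both parts non-empty (non-emptiness from $0<p+q$ and $p+q<|A|+|B|$). I would also need the standard fact, which I'd cite from the Gale--Ryser circle of ideas, that such $p,q$ exist independently of the chosen realization — i.e.\ that~\eqref{eq:bipindecomp} is genuinely a property of $d$ — and here I would lean on the adapted Tyshkevich theory from~\cite{erdos_new_2018, tyshkevich_decomposition_2000} rather than reprove it, since the lemma is explicitly stated as ``adapted from Theorem 2(i)''.

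Finally I would assemble the two directions, being careful about the boundary cases: when $p=0$ the sum on the left is empty and the condition reads $0=\sum_{i=|B|-q+1}^{|B|}d^B_i$, corresponding to splitting off an all-isolated block of $B$-vertices; when $q=0$ it reads $\sum_{i=1}^p d^A_i = p|B|$, i.e.\ $a_1,\dots,a_p$ are universal, which is the composition with a complete-to-$B$ block as the first factor. Both are legitimate non-trivial decompositions as long as $0<p+q<|A|+|B|$, so no separate treatment is really needed beyond checking the inequalities quoted in the statement. The cleanest write-up, I think, presents the forward direction as a direct edge count (two paragraphs) and the converse as the equality-case analysis of the single chain of inequalities above, with the realization-independence handed off to the cited sources.
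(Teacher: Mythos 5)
The paper does not prove Lemma~\ref{lem:bipindecomp}; it is cited from~\cite{erdos_new_2018} (in turn adapted from Tyshkevich), so there is no in-paper argument to compare against. Your overall strategy --- edge-count for the ``only if'' direction, an inequality squeeze for the ``if'' direction --- is the right shape, but the bookkeeping is inconsistent in both directions and as written the argument does not go through.

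In the forward direction, your choice $q:=|B_2|$ is the wrong one, and your account of which block of $B$ has the larger degrees is backwards. Since $G_1[A_1,B_1]\circ G_2[A_2,B_2]$ joins every $a\in A_1$ to every $b\in B_2$, each $b\in B_2$ has $G$-degree $\deg_{G_2}(b)+|A_1|\ge|A_1|$ while each $b\in B_1$ has $G$-degree $\deg_{G_1}(b)\le|A_1|$; so $B_2$ occupies the \emph{top} of $d^B$ and $B_1$ the bottom. To match~\eqref{eq:bipindecomp} one must take $p:=|A_1|$ and $q:=|B_1|$. Then $\sum_{i=1}^p d^A_i=e(A_1,B_1)+p|B_2|=\sum_{i=|B|-q+1}^{|B|}d^B_i+p(|B|-q)$ (the first equality counts edges incident to $A_1$, the second uses that $A_2$ sends no edges to $B_1$). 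Your writeup oscillates between declaring the bottom-$q$ block to be $B_2$ and then summing over $B_1$ there, and at one point you assert that ``$B_1$-vertices have the larger $B$-degrees,'' which is false.

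In the converse, the squeeze you set up is not the one that~\eqref{eq:bipindecomp} actually closes. You read $p(|B|-q)$ as the size of a complete join from $A_1=\{a_1,\dots,a_p\}$ to the $q$ vertices $\{b_{|B|-q+1},\dots,b_{|B|}\}$, but $p(|B|-q)$ is the size of a complete join to $|B|-q$ vertices, namely the \emph{top} block $B^+:=\{b_1,\dots,b_{|B|-q}\}$. Writing $B^-:=\{b_{|B|-q+1},\dots,b_{|B|}\}$ and $A_2:=\{a_{p+1},\dots,a_{|A|}\}$, expand $\sum_{i=1}^p d^A_i=e(A_1,B^+)+e(A_1,B^-)$ and $\sum_{i=|B|-q+1}^{|B|}d^B_i=e(A_1,B^-)+e(A_2,B^-)$; then~\eqref{eq:bipindecomp} reduces, after cancelling $e(A_1,B^-)$, to
\begin{equation*}
 e(A_1,B^+)=p(|B|-q)+e(A_2,B^-).
\end{equation*}
Since $e(A_1,B^+)\le p(|B|-q)$ and $e(A_2,B^-)\ge 0$, both are forced tight: $A_1$ is completely joined to $B^+$ and there are no $A_2$--$B^-$ edges, which is exactly $G=G[A_1,B^-]\circ G[A_2,B^+]$, nonempty factors being guaranteed by $0<p+q<|A|+|B|$. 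Your version uses the bound $e(A_1,B^+)\le\sum_{j=1}^{|B|-q}d^B_j$ in place of $e(A_1,B^+)\le p(|B|-q)$, which is not the inequality that~\eqref{eq:bipindecomp} saturates. Finally, the appeal to Gale--Ryser for realization-independence is superfluous here: the condition~\eqref{eq:bipindecomp} visibly depends only on $d$, and the two directions above are each proved for an arbitrary fixed realization.
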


\begin{theorem}[\cite{erdos_new_2018}, adapted from Corollaries 6 and 9
	in~\cite{tyshkevich_decomposition_2000}]\label{thm:biptysh}\mbox{}
	\begin{enumerate}
		\item[\emph{(i)}] Any splitted bipartite degree sequence $d$ can
			be uniquely decomposed in the form \[ d=\alpha_1\circ
			\cdots\circ \alpha_k,\] where $\alpha_i$ is an
			indecomposable splitted bipartite degree sequence for
			$i=1,\ldots,k$.
		\item[\emph{(ii)}] Any realization $G$ of $d$ can be represented
			in the form \[ G=G[A_1,B_1]\circ \cdots \circ
			G[A_k,B_k],\] where $G[A_i,B_i]$ is a realization of
			$\alpha_i$.
		\item[\emph{(iii)}] Any valid bipartite switch of $G$ is a valid
			bipartite switch of $G[A_i,B_i]$ for some $i$.
	\end{enumerate}
\end{theorem}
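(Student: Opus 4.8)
The plan is to obtain parts (i) and (ii) by transporting Tyshkevich's decomposition theorem for split graphs (Corollaries~6 and~9 of~\cite{tyshkevich_decomposition_2000}) along the edge-deletion bijection $\Psi$ of~\cite{erdos_new_2018}, and to prove (iii) by a direct inspection of the block structure of a composition. For (i) and (ii) I would first record the key feature of $\Psi$: deleting, respectively re-inserting, the clique on the first color class is a bijection between splitted graphs and splitted bipartite graphs that carries Tyshkevich's composition to the composition of Definition~\ref{def:bipcomp}, and carries indecomposable splitted objects to indecomposable splitted bipartite ones; concretely, a bipartite graph factors as $G[A,B]\circ H[C,D]$ exactly when its $\Psi$-preimage factors with the matching split pieces. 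Granting this — which is precisely what~\cite{erdos_new_2018} sets up — the unique factorization of a splitted bipartite degree sequence into indecomposables is Tyshkevich's Corollary~6 read through $\Psi$, and the statement that an arbitrary realization inherits the same factorization is his Corollary~9.

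For a more self-contained route to (i)-existence and (ii) one can induct on $|A|+|B|$ using Lemma~\ref{lem:bipindecomp}. If $d$ is indecomposable there is nothing to do; otherwise fix a witness pair $(p,q)$ for~\eqref{eq:bipindecomp}, and in an arbitrary realization $G$ of $d$ let $X$ be the $p$ largest-degree vertices of $A$ and $Y$ the $q$ smallest-degree vertices of $B$. A double count over the edges meeting $X$ gives
\[
\textstyle\sum_{i=1}^{p} d^A_i \;=\; e_G(X,B\setminus Y)+e_G(X,Y)\;\le\; p\,(|B|-q)+e_G(X,Y),
\]
and since $\sum_{i=|B|-q+1}^{|B|} d^B_i = e_G(X,Y)+e_G(A\setminus X,Y)$, comparing with~\eqref{eq:bipindecomp} forces $e_G(A\setminus X,Y)=0$ and then $e_G(X,B\setminus Y)=p(|B|-q)$. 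Hence every realization of $d$ equals $G[X,Y]\circ G[A\setminus X,B\setminus Y]$, the two factors realizing the restrictions of $d$; recursing on both factors and flattening by associativity of $\circ$ yields (i)-existence and (ii) simultaneously. Uniqueness comes down to the fact that the pairs $(p,q)$ solving~\eqref{eq:bipindecomp} are linearly ordered coordinatewise, so they form a single maximal chain whose successive gaps are the indecomposable factors; this is Tyshkevich's uniqueness transported by $\Psi$, and can also be verified directly by intersecting the forced structures of two witness pairs.

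For (iii): by associativity and Definition~\ref{def:bipcomp}, in $G=G[A_1,B_1]\circ\cdots\circ G[A_k,B_k]$ the adjacency of a first-class vertex $u\in A_i$ to a second-class vertex $v\in B_j$ is \emph{forced}, namely $uv\in E(G)$ whenever $j>i$ and $uv\notin E(G)$ whenever $j<i$, and is free exactly when $i=j$. Consider a valid bipartite switch of $G$ acting on an alternating $4$-cycle on $a,x,b,y$ with $ax,by\in E(G)$ and $ay,bx\notin E(G)$, and put $a\in A_{i_a}$, $b\in A_{i_b}$, $x\in B_{j_x}$, $y\in B_{j_y}$. Then $ax\in E(G)$ gives $i_a\le j_x$, $bx\notin E(G)$ gives $j_x\le i_b$, $by\in E(G)$ gives $i_b\le j_y$, and $ay\notin E(G)$ gives $j_y\le i_a$; chaining these, $i_a\le j_x\le i_b\le j_y\le i_a$, so all four indices coincide, say with value $i$. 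Hence $\{a,b\}\subseteq A_i$ and $\{x,y\}\subseteq B_i$, the $4$-cycle lies inside $G[A_i,B_i]$, and since a switch always preserves degrees, it is a valid bipartite switch of $G[A_i,B_i]$.

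The main obstacle is really the work already done in~\cite{erdos_new_2018,tyshkevich_decomposition_2000}: once the $\Psi$-correspondence and Tyshkevich's theorem are available, (i) and (ii) are a translation. If one prefers to stay inside the present paper, the effort concentrates in the forcing double count above and in establishing the coordinatewise linear order of the witness pairs needed for uniqueness. Part (iii), the one genuinely new ingredient, needs only the short index chain displayed above.
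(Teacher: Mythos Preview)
The paper does not prove this theorem at all: it is stated as a result imported from~\cite{erdos_new_2018} and~\cite{tyshkevich_decomposition_2000}, with no argument given. So there is nothing to compare your proof against in the paper itself.

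Your argument is correct. The transport of (i)--(ii) along $\Psi$ is exactly what the citations encode, and your self-contained double-counting route to existence and (ii) via Lemma~\ref{lem:bipindecomp} is the standard one; the only point worth a word of care is that when degrees tie, the set $X$ of ``the $p$ largest-degree vertices'' is not literally unique, but your forcing conclusion $e_G(X,B\setminus Y)=p(|B|-q)$ and $e_G(A\setminus X,Y)=0$ shows any admissible choice gives the same bipartition of edges, so the factorization is well-defined at the level of realizations. Your proof of (iii) is clean and complete: the chain $i_a\le j_x\le i_b\le j_y\le i_a$ from the forced adjacency pattern of Definition~\ref{def:bipcomp} pins all four vertices to a single block, and this is precisely the content of (iii). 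This part is the only piece not already a direct citation, and your argument for it is the natural one.
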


It follows from the previous theorem that indecomposability is determined by the
degree sequence. Lemma~\ref{lem:bipindecomp} gives an explicit characterization
of such splitted bipartite degree sequences.

\begin{definition}
	Let $\overline{\cD^\circ}$ be the closure of $\cD$ under the
	composition operator $\circ$.
\end{definition}

The following theorem is a due to~Erdős, Miklós, and Toroczkai.

\begin{theorem}[Theorem~3.6 in~\cite{erdos_new_2018}]\label{thm:rapidcomp}
	If $\cD$ is rapidly mixing, then so is
	$\overline{\cD^\circ}$.
\end{theorem}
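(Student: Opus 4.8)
The plan is to reduce rapid mixing of $\overline{\cD^\circ}$ to rapid mixing of $\cD$ by exploiting the structural results in Theorem~\ref{thm:biptysh}, in particular part~(iii), which says that every valid switch of a realization $G = G[A_1,B_1]\circ\cdots\circ G[A_k,B_k]$ is localized to a single indecomposable factor $G[A_i,B_i]$. First I would fix a degree sequence $d \in \overline{\cD^\circ}$ and write its unique Tyshkevich-decomposition $d = \alpha_1\circ\cdots\circ\alpha_k$ from Theorem~\ref{thm:biptysh}(i); one must check that each factor $\alpha_i$ lies in $\cD$ (or at least in a class on which we already know rapid mixing — here we should argue that the indecomposable factors of elements of $\overline{\cD^\circ}$ are, up to the obvious bookkeeping, elements of $\cD$, since $\circ$ is associative and the decomposition into indecomposables is unique). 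By Theorem~\ref{thm:biptysh}(ii) the state space $\cG(d)$ is in bijection with the product $\prod_{i=1}^k \cG(\alpha_i)$, and by part~(iii) the switch Markov chain on $\cG(d)$ decomposes as a ``product chain'': a single step picks one coordinate (via the stationary distribution of the chain's choice of which $C_4$ to attempt) and performs a switch-chain step inside that factor, leaving the others fixed.

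Next I would invoke the standard comparison/decomposition machinery for product Markov chains. Since the uniform distribution on $\cG(d)$ is the product of the uniform distributions on the $\cG(\alpha_i)$, and each factor chain is (by hypothesis on $\cD$) rapidly mixing, the product chain is rapidly mixing: concretely, one bounds the spectral gap of the product chain from below by (a constant multiple of) the minimum of the spectral gaps of the factor chains, with only a polynomial loss coming from the number of factors $k \le |A|+|B| = \mathrm{poly}(n)$ and from the (polynomially small but nonzero) probability that a given step of the big chain ``selects'' a particular factor. Equivalently, one can run a direct canonical-path / multicommodity-flow argument: to route flow between two realizations $G$ and $G'$ of $d$, decompose the task coordinatewise as routing between $G[A_i,B_i]$ and $G'[A_i,B_i]$ inside each $\cG(\alpha_i)$ using the flows guaranteed by rapid mixing of $\cD$, and concatenate these paths (in any fixed order of the coordinates). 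The congestion of the concatenated flow is at most $k$ times the maximum congestion of a single-factor flow, again a polynomial factor.

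The main technical obstacle is making the ``which factor does a step act on'' bookkeeping precise and showing the selection probability is only polynomially small: in the switch chain a step is chosen by picking a $C_4$ (or a pair of vertex-pairs) uniformly among all $\binom{n}{2}^2$-ish choices, and Theorem~\ref{thm:biptysh}(iii) guarantees every \emph{valid} such choice sits in one factor, but one must verify that restricting attention to $C_4$'s within factor $i$ reproduces, up to a global normalizing constant, exactly the transition probabilities of the switch chain on $\cG(\alpha_i)$ — i.e., that the induced chain on each coordinate really is the switch chain on that factor and not some reweighted variant. This amounts to checking that the set of valid switches of $G$ that act on factor $i$ is in natural bijection with the valid switches of $G[A_i,B_i]$, which follows from part~(iii) together with the fact that the composition edges $\{ad : a\in A, d\in D\}$ added by $\circ$ are present in \emph{every} realization and hence can never be part of an alternating $C_4$. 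Once this identification is in hand, a clean application of the product-chain spectral-gap bound (or the concatenated-flow bound) together with Sinclair's theorem finishes the proof; I expect the write-up to be short, with the bulk of the work already done by Theorem~\ref{thm:biptysh}.
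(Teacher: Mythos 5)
The paper does not give its own proof of Theorem~\ref{thm:rapidcomp}: it quotes the statement from \cite{erdos_new_2018} and merely remarks that it is ``a simple consequence'' of the Markov-chain decomposition theorem, Theorem~5.1 of \cite{erdos_decomposition_2015}. That theorem is precisely the product-chain machinery you invoke, so your reconstruction is in the right spirit and very likely matches the cited argument.

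There is, however, one genuine slip in your write-up that would block the proof as stated. You assert that the indecomposable factors $\alpha_i$ of a $d\in\overline{\cD^\circ}$ are ``up to the obvious bookkeeping'' elements of $\cD$, and propose to invoke rapid mixing of $\cD$ on each $\cG(\alpha_i)$. This is false in general: if $\cD$ consists entirely of decomposable degree sequences, then no indecomposable factor of any $d\in\overline{\cD^\circ}$ lies in $\cD$, and the hypothesis gives you nothing about the chains on the $\cG(\alpha_i)$. The repair is to use the coarser decomposition that actually witnesses $d\in\overline{\cD^\circ}$: write $d=d_1\circ\cdots\circ d_m$ with each $d_j\in\cD$, and by associativity of $\circ$ and Theorem~\ref{thm:biptysh}(ii) obtain the product structure $\cG(d)\cong\prod_{j}\cG(d_j)$ over these $\cD$-blocks. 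Theorem~\ref{thm:biptysh}(iii), applied to the finer indecomposable decomposition, still shows that every valid switch of $G$ lies inside a single indecomposable factor, and since each indecomposable factor sits inside exactly one block $d_j$, every valid switch is confined to one coordinate of the coarse product. From there your argument goes through: the induced chain on coordinate $j$ is the switch chain on $\cG(d_j)$ lazified by the factor $\binom{n}{4}\big/\binom{n_j}{4}$ (where $n_j$ is the number of vertices of $d_j$), which is polynomial in $n$, so the product-chain spectral-gap bound (or the concatenated canonical-flow bound) yields polynomial mixing for $\cG(d)$ with only a polynomial loss in $m\le n$ and in the laziness factor.
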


Theorem~\ref{thm:rapidcomp} is a simple consequence
of~\cite[Theorem~5.1]{erdos_decomposition_2015}. By Theorem~\ref{thm:rapidcomp},
for a class of degree sequences $\cD$ to be rapidly mixing it is sufficient that
$\operatorname{indecomp}(\cD)$ is rapidly mixing, where
\begin{equation*}
	\operatorname{indecomp}(\cD):=\{ \alpha\ |\ \alpha\text{\ is an
	indecomposable component of some }d\in\cD \}.
\end{equation*}

Because the number of realizations is independent of the internal order of the
bipartition, we revert to using ``bipartite degree sequence'' instead of the
cumbersome ``splitted bipartite degree sequence''. From now on, bipartite graphs
and their degree sequences are assumed to be splitted.

\subsection{Non-stability of Tyshkevich-compositions}\label{sec:compnonstable}

As promised, we now revisit the two examples in Section~\ref{sec:examples}.
Observe, that
\begin{align*}
	\bm{h}_0(n)&=\stackrel{n}{\overbrace{(1;1)\circ\ldots\circ (1;1)}}\\
	H_0(n)&=\stackrel{n}{\overbrace{K_2\circ\ldots\circ{K_2}}}
\end{align*}
Note, that $(1;1)=(0;\emptyset)\circ (\emptyset;0)$, so the indecomposable
decomposition of $\bm{h}_0(n)$ has $2n$ components.
Theorem~\ref{thm:biptysh} implies that $H_0(n)$ is the only realization of
$\bm{h}_0(n)$. This innocent looking example leads to the following result:

\begin{lemma}\label{lem:compnonstable}
	For any class $\cD$ of bipartite degree sequences,
	$\overline{\cD^\circ}$ is not $P$-stable (except if
	$\alpha^A=\emptyset$ for all $\alpha\in\cD$ or
	$\beta^B=\emptyset$ for all $\beta\in\cD$).
\end{lemma}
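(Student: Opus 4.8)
\textbf{Proof proposal for Lemma~\ref{lem:compnonstable}.}
The plan is to exploit the fact, already observed for $\bm{h}_0(n)$, that a long chain of compositions behaves like a path for the purpose of counting realizations, and that paths generate Fibonacci-type growth under unit perturbations. Fix a class $\cD$ of bipartite degree sequences that is not excluded by the hypothesis, so there are degree sequences $\alpha,\beta\in\cD$ (possibly equal) with $\alpha^A\neq\emptyset$ and $\beta^B\neq\emptyset$. Pick a realization $G_\alpha=G_\alpha[A_\alpha,B_\alpha]$ of $\alpha$ with $A_\alpha\neq\emptyset$, and a realization $G_\beta=G_\beta[A_\beta,B_\beta]$ of $\beta$ with $B_\beta\neq\emptyset$. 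For each $n$ consider the composition
\[
	\bm{c}(n):=\beta\circ\stackrel{n}{\overbrace{(1;1)\circ\cdots\circ(1;1)}}\circ\,\alpha
		\ \in\ \overline{\cD^\circ},
\]
which lies in $\overline{\cD^\circ}$ because $(1;1)=(0;\emptyset)\circ(\emptyset;0)$ and both $(0;\emptyset)$ and $(\emptyset;0)$ are obtained from $\alpha$, resp.\ $\beta$, after trivial compositions — or, more simply, we may assume $\cD$ contains a one-vertex degree sequence on each side by the hypothesis, and note that $\overline{\cD^\circ}$ is closed under $\circ$ so it contains all words in its letters; I will spell out in the write-up which minimal sub-case of the hypothesis is actually used.

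The first step is to perturb $\bm{c}(n)$ by removing one unit of degree from a suitable vertex of the $\alpha$-block on the $A$-side and one unit from a suitable vertex of the $\beta$-block on the $B$-side (this keeps the side-sums equal, so the result is a candidate degree sequence), obtaining $\bm{c}'(n)$ with $\|\bm{c}'(n)-\bm{c}(n)\|_1=2$. The second step is to count realizations: I claim $|\cG(\bm{c}(n))|$ grows at most polynomially in $n$ faster than a constant — in fact it equals $|\cG(\alpha)|\cdot|\cG(\beta)|$ by Theorem~\ref{thm:biptysh}(ii), since the middle $(1;1)$ blocks each have a unique realization $K_2$ and the decomposition is forced — whereas $|\cG(\bm{c}'(n))|$ grows like $\varTheta(\varphi^n)$ with $\varphi=(1+\sqrt5)/2$. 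The growth estimate is exactly the linear-recursion computation referenced in Corollary~\ref{cor:orderofmagnitude} applied to $\bm{h}_0-\mathds{1}_{a_1}-\mathds{1}_{b_n}$: after the perturbation the "staircase'' of forced edges in the middle section becomes flexible, and the number of ways to complete it satisfies a Fibonacci recurrence; the $\alpha$- and $\beta$-blocks contribute only bounded multiplicative factors (bounded because $\alpha,\beta$ are fixed), and crucially the non-emptiness of $A_\alpha$ and $B_\beta$ is what allows the perturbation to "connect'' to the Fibonacci machinery of the middle. Hence no polynomial $p$ can satisfy $|\cG(\bm{c}'(n))|\le p(n)|\cG(\bm{c}(n))|$ for all $n$, so $\overline{\cD^\circ}$ is not $P$-stable.

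I would organize the write-up as: (1) reduce to the existence of the two "boundary'' letters using the hypothesis; (2) define $\bm{c}(n)$ and its perturbation $\bm{c}'(n)$; (3) compute $|\cG(\bm{c}(n))|$ via Theorem~\ref{thm:biptysh}; (4) lower-bound $|\cG(\bm{c}'(n))|$ by exhibiting $\varphi^n$ distinct realizations (e.g.\ by a transfer-matrix / explicit injection from binary strings avoiding "11'' into realizations, mirroring the $\bm{g}(n)$ and $\bm{h}_0(n)$ discussions), or by citing the recursion of Corollary~\ref{cor:orderofmagnitude} directly. The main obstacle I anticipate is step (4) done cleanly in the presence of arbitrary fixed end-blocks $\alpha,\beta$: one has to check that the perturbation can always be pushed to a vertex whose removal genuinely unlocks the staircase (this is where $A_\alpha\neq\emptyset$ and $B_\beta\neq\emptyset$ enter), and that the end-blocks do not secretly collapse the count — but since $\alpha$ and $\beta$ are held fixed while $n\to\infty$, any contribution they make is $O(1)$ and cannot absorb exponential growth, so the obstacle is bookkeeping rather than a genuine difficulty. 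The parenthetical exception in the statement is exactly the degenerate case where no such boundary letter exists on one side, and there $\overline{\cD^\circ}$ may coincide with $\cD$ up to trivial padding and can be $P$-stable, so the exception cannot be removed.
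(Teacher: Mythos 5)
Your central construction does not lie in the ambient class. You build
$\bm{c}(n)=\beta\circ(1;1)\circ\cdots\circ(1;1)\circ\alpha$ and want it in
$\overline{\cD^\circ}$, but $\overline{\cD^\circ}$ is the closure of $\cD$
under composition only: it contains compositions of members of $\cD$, not
``factors'' of them. Nothing in the hypothesis puts $(1;1)$, $(0;\emptyset)$
or $(\emptyset;0)$ into $\overline{\cD^\circ}$. Take $\cD=\{(1,1;1,1)\}$ as a
concrete counterexample: this degree sequence is indecomposable
(Lemma~\ref{lem:bipindecomp}) and has both sides nonempty, yet
$\overline{\cD^\circ}$ contains no one-vertex component whatsoever, so your
$\bm{c}(n)$ is simply not in it. The ``more simply, we may assume $\cD$
contains a one-vertex degree sequence on each side'' fallback restates the
same unjustified assumption rather than fixing it, so the argument breaks at
the very first step.

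The paper avoids this by working inside $\bm{d}(r)=(\alpha\circ\beta)^{\circ
r}$, which is unambiguously in $\overline{\cD^\circ}$. The idea you are
missing is that you need not \emph{insert} a half-graph into the composition:
any realization $G=G_1\circ\cdots\circ G_{2r}$ of $\bm{d}(r)$, with
$G_{2i-1}\in\cG(\alpha)$ and $G_{2i}\in\cG(\beta)$, already \emph{contains an
induced} copy of $H_0(r)$. Choose one vertex $a_i$ in the $A$-side of each
$\alpha$-copy $G_{2i-1}$ (nonempty by hypothesis) and one vertex $b_i$ in the
$B$-side of each $\beta$-copy $G_{2i}$; the definition of $\circ$ forces
$a_ib_j\in E(G)$ iff $2i-1<2j$, i.e.\ iff $i\le j$, so
$G[\{a_1,\ldots,a_r\},\{b_1,\ldots,b_r\}]\cong H_0(r)$. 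Replacing this
induced subgraph by any realization of
$\bm{h}_0(r)-\mathds{1}_{a_1}-\mathds{1}_{b_r}$ yields exponentially many
(Corollary~\ref{cor:orderofmagnitude}) realizations of a single degree
sequence $\bm{d}'\in\bS_2(\bm{d}(r))$, and the map $G\mapsto G'$ is invertible
because the induced subgraph on the chosen vertices is recoverable and
everything outside it is untouched. That is precisely the ``unlocking the
staircase'' phenomenon you intuit, but realized via an induced half-graph
rather than an inserted one; without this observation your plan produces
neither a valid degree sequence in $\overline{\cD^\circ}$ nor a clean
injectivity argument for the count.
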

\begin{proof}
	Take $\alpha,\beta\in \cD$ such that $\alpha^A\neq\emptyset$ and
	$\beta^B\neq\emptyset$. Let
	\begin{equation*}
		\bm{d}(r)=\stackrel{r}{\overbrace{(\alpha\circ\beta)\circ\ldots\circ
		(\alpha\circ\beta)}}.
	\end{equation*}
	From Theorem~\ref{thm:biptysh} it follows that
	\begin{equation*}
		|\cG(\bm{d}(r))|={|\cG(\bm{\alpha})|}^r\cdot
	{|\cG(\bm{\beta})|}^r.
	\end{equation*}
	Let $G=(G_1\circ G_2)\circ \ldots\circ (G_{2r-1}\circ G_{2r})$ be an
	arbitrary realization of $\bm{d}(r)$ where $G_{2i-1}\in \cG(\alpha)$ and
	$G_{2i}\in \cG(\beta)$. Recall that
	$\bm{h}_0(r)-\mathds{1}_{a_1}-\mathds{1}_{b_r}$ has exponentially many
	realizations (Corollary~\ref{cor:orderofmagnitude}).

	Choose a vertex $a_i$ from the first class of $G_{2i-1}$
	and $b_{i}$ from the second class of $G_{2i}$ (for $i\in [1,r]$).
	Observe, that $G[\{a_1,\ldots,a_r\},\{b_1,\ldots,b_r\}]$ is an induced
	copy $H_0(r)$. By replacing this subgraph with a realization of
	$\bm{h}_0(r)-\mathds{1}_{a_1}-\mathds{1}_{b_r}$, an exponential number
	of realizations of $\bm{d}(r)-\mathds{1}_{a_1}-\mathds{1}_{b_r}$ are
	obtained; however, because the substitution does not change the
	components $G_{2i-1}$ and $G_{2i}$ for any $i$, $G$ is recoverable from
	such realizations. In other words, every realization of some
	$\bm{d}'\in\bS_2(\bm{d}(r))$ is obtained from at most one realization of
	$\bm{d}(r)$, so $\cD$ cannot be $P$-stable.
\end{proof}

The degree sequence $\bm{g}(n)$ was obtained by replacing $a_{i}b_{i}$ with two
independent edges. Therefore Lemma~\ref{lem:compnonstable}
applies to $\{\bm{g}(n)\ |\ n\in\bN\}$:
\begin{align*}
	\bm{g}(n)&=\stackrel{n}{\overbrace{(1,1;1,1)\circ\ldots\circ
	(1,1;1,1)}}
\end{align*}
Naturally, $\stackrel{n}{\overbrace{2K_2\circ\ldots\circ 2K_2}}$ is a
realization of $\bm{g}(n)$ and all $2^n$ realizations of $\bm{g}(n)$ are
isomorphic to it (Theorem~\ref{thm:biptysh}).

\medskip

Theorem~\ref{thm:nonstable} is not, however, a simple consequence of
Lemma~\ref{lem:compnonstable}:
\begin{lemma}
	The bipartite degree sequence $\bm{h}_k(n)$ is indecomposable for $0<k<n$.
\end{lemma}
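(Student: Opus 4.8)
The plan is to use the purely arithmetic characterisation of decomposability in Lemma~\ref{lem:bipindecomp}, so that the statement turns into a finite inequality check. Step 1: put $\bm{h}_k(n)=(\bm{d}^A;\bm{d}^B)$ into non-increasing order on each side. Both sides have the \emph{same} degree multiset, namely $\{1,2,\dots,n-1\}$ with the value $n-k$ doubled (here I use $1\le n-k\le n-1$, i.e.\ $0<k<n$, to normalise), so the two sorted sequences coincide and are
\[
	d^A_i=d^B_i=n+1-i-[\,i\le k\,],\qquad i=1,\dots,n,
\]
where $[\,P\,]$ is $1$ if $P$ holds and $0$ otherwise.

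Step 2: substitute these into~\eqref{eq:bipindecomp}. Using $\sum_{i=1}^{p}(n+1-i)=p(n+1)-\binom{p+1}{2}$, $\sum_{i=n-q+1}^{n}(n+1-i)=\binom{q+1}{2}$, $\sum_{i=1}^{p}[\,i\le k\,]=\min(p,k)$ and $\sum_{i=n-q+1}^{n}[\,i\le k\,]=\max(0,q-(n-k))$, equation~\eqref{eq:bipindecomp} reduces — after the same cancellation that makes $\bm{h}_0(n)$ split exactly when $p-q\in\{0,1\}$ — to $f(p,q)=0$, where
\[
	f(p,q):=-\tfrac12(p-q)(p-q-1)-\min(p,k)+\max\bigl(0,\,q-(n-k)\bigr).
\]
Hence it suffices to prove $f(p,q)<0$ for all integers $0\le p,q\le n$ with $0<p+q<2n$; by Lemma~\ref{lem:bipindecomp} this is exactly the assertion that $\bm{h}_k(n)$ is indecomposable.

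Step 3: the key estimate is
\[
	-\min(p,k)+\max\bigl(0,\,q-(n-k)\bigr)\ \le\ \max(0,\,q-p)\qquad(0\le p,q\le n,\ 0<k<n),
\]
proved by splitting on $q\le n-k$ versus $q>n-k$ (in the second case both sides are affine in $p,q$ and one compares coefficients, using $q\le n$ and $k\le n$). Granting this, $f(p,q)\le-\tfrac12(p-q)(p-q-1)+\max(0,q-p)$, and the right-hand side equals $-\tfrac12(p-q)(p-q-1)\le 0$ when $q\le p$ and $\tfrac12(q-p)\bigl(1-(q-p)\bigr)\le 0$ when $q>p$; in particular it is $\le 0$, with equality only if $p-q\in\{-1,0,1\}$. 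It then remains to rule out equality in $f(p,q)\le 0$: feeding each value $p-q\in\{-1,0,1\}$ back into the two inequalities just used pins the potential equality cases down to $(p,q)\in\{(0,0),(n,n)\}$ or to boundary points with $q=n+1$, all of which violate $0<p+q<2n$ or $q\le n$. (This last step uses $k\ge1$ and $k\le n-1$ once more; consistently, $\bm{h}_0(n)$ and $\bm{h}_n(n)$ genuinely are decomposable.)

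The step I expect to be the real obstacle is the regime $q>p$: there the correction term $\max(0,q-(n-k))$ is positive and can be as large as $k$, so $f$ is not controlled by its quadratic part alone. The resolution is the chain ``positive correction $\le\max(0,q-p)\le$ quadratic part (which is $\le0$)'', but each link of this chain is tight on a small exceptional set, so the crux is bookkeeping: checking that every tight configuration lies outside the admissible range $0\le p,q\le n$, $0<p+q<2n$. Once the displayed estimate above is in hand, this is a short, purely elementary case analysis.
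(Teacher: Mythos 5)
Your proof is correct and follows the same route as the paper: reduce via Lemma~\ref{lem:bipindecomp} to a single Diophantine equation in $p,q$ (after sorting, your $f(p,q)=0$ is exactly the paper's $\max\{k-p,0\}+\max\{k-n+q,0\}-k=\binom{q-p+1}{2}$) and then rule out solutions in the admissible range. The paper dismisses the verification with ``a short case analysis shows the right hand side is larger''; your Step~3 --- the estimate $-\min(p,k)+\max(0,q-(n-k))\le\max(0,q-p)$, the observation that the quadratic term then forces $p-q\in\{-1,0,1\}$, and the check that the tight configurations $(0,0)$, $(n,n)$, and $(n,n+1)$ all fall outside $0<p+q<2n$, $0\le p,q\le n$ --- is precisely the case analysis the paper leaves implicit.
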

\begin{proof}
	Via Lemma~\ref{lem:bipindecomp}. Suppose $\bm{h}_k(n)$ is decomposable.
	Substituting into~\eqref{eq:bipindecomp}, we get
	\begin{align*}
		\binom{n+1}{2}-k-\binom{n-p+1}{2}+\max\{k-p,0\}&=\\
		= p(n-q)+\binom{q+1}{2}&-\max\{k-n+q,0\} \\
		\max\{k-p,0\}+\max\{k-n+q,0\}-k & = \binom{q-p+1}{2} \\
	\end{align*}
	A short case analysis shows that the right hand side is larger than the
	left hand side.
\end{proof}

\subsection{Strong stability and
\texorpdfstring{$H_0(\ell)$}{𝐻₀(𝓁)}}\label{sec:sstable}

Strong stability is defined by Amanatidis and
Kleer~\cite{amanatidis_rapid_2019}. In their definition, they measure how
stable a degree sequence is by measuring the maximum distance of a perturbed
realization from the closest realization.
\begin{definition}[adapted from~\cite{amanatidis_rapid_2019}]\label{def:sstable}
	A degree sequence $\bm{d}$ is \emph{distance-$\ell$ strongly stable} if
	for any realization $G'$ of a degree sequence $d'$ for which
	$\|d'-d\|_1\le 2$ there exists a realization $G$ such that
	$|E(G\triangle G')|\le \ell$. A set of degree sequences is called
	\emph{strongly stable} if there exists an $\ell$ such that every degree
	sequence in the set is distance-$\ell$ strongly stable.
\end{definition}
The distance function $|E(G\triangle G')|$ used in Definition~\ref{def:sstable}
differs from the function used in~\cite{amanatidis_rapid_2019} up-to a factor of
2. Indeed, in one step, the Jerrum-Sinclair chain changes the size of the
symmetric difference by at most 2. In the other direction, suppose $G$ minimizes
$|E(G\triangle G')|$. Take a vertex $v$ where $\bm{d}(v)=\bm{d}'(v)$: $E(G)$ and
$E(G')$ evenly contribute to the edges incident to $v$ in $G\triangle G'$. For
the two vertices where $\bm{d}$ and $\bm{d'}$ differ, there is an extra edge
from $G$ or $G'$. For this reason, if $G\triangle G'$ is not a path, then it
contains a cycle $C$ whose edges alternate between $G$ and $G'$, hence $C$ is
alternating (between edges and non-edges) in $G$ as well. However,
\begin{equation*}
	|E((G\triangle C)\triangle G')|=|E(G\triangle G')|-|E(C)|,
\end{equation*}
which contradicts the minimality of $G$. If $G\triangle G'$ is path, the
Jerrum-Sinclair chain needs at most $\lceil\frac12|E(G\triangle G')|\rceil$
steps to transform $G'$ into $G$.

The way we define strong stability immediately shows that strongly stable sets
of degree sequences are also $P$-stable with $p(n)=n^{\ell+1}$.
\begin{definition}
	We say that a bipartite graph $G[A,B]$ is \emph{covered by alternating
	cycles} if for any $x\in A$ and $y\in B$ there exists a cycle $C$ which
	traverses (covers) $xy$ and alternates between the vertex sets $A$ and $B$, and
	also alternates between edges and non-edges of $G[A,B]$.
\end{definition}

\begin{lemma}
	The following statements are equivalent for a bipartite degree
	sequence $\bm{d}$.
	\begin{enumerate}
		\item $\bm{d}$ is indecomposable;
		\item every $G\in\cG(\bm{d})$ is covered by alternating cycles;
		\item every $\bm{d}'\in\bS_2(\bm{d})$ is graphic.
	\end{enumerate}
\end{lemma}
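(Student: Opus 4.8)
We want to show three conditions on a bipartite degree sequence $\bm{d}$ are equivalent: (1) indecomposability; (2) every realization is covered by alternating cycles; (3) every $\bm{d}'\in\bS_2(\bm{d})$ is graphic. Let me sketch how I'd prove this.

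The plan is to prove $(1)\Rightarrow(2)\Rightarrow(3)\Rightarrow(1)$, which keeps each implication close to the natural direction of argument.

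For $(2)\Rightarrow(3)$: Given $\bm{d}'\in\bS_2(\bm{d})$, by the requirement $\|e^A\|_1=\|e^B\|_1$ built into the definition of $\bS_2$, the perturbation moves one unit of degree from some vertex $x$ to some vertex $y$ on the same side, or — if the two perturbed vertices are on opposite sides — it increases both or decreases both. Actually the cleanest way: if $\bm{d}'$ raises the degree of $x\in A$ and $y\in B$ each by one, take any $G\in\cG(\bm{d})$; if $xy\notin E(G)$ we are done by adding it, and if $xy\in E(G)$ we first need to find a realization avoiding that edge — here is where covering by alternating cycles is used, since an alternating cycle through $xy$ lets us switch so that $xy$ becomes a non-edge. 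The symmetric case (lowering degrees) is dual, removing an edge $xy$ that can be made present by the same device. So from $(2)$ we can always realize the perturbed sequence.

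For $(3)\Rightarrow(1)$: I'd argue the contrapositive. If $\bm{d}$ is decomposable, write $\bm{d}=\alpha\circ\gamma$ with both factors non-empty. Then there is a vertex $a$ in the $A$-part of $\alpha$ and a vertex $d$ in the $D$-part of $\gamma$; in every realization $ad\in E(G)$ (it is a "forced" edge) while there is a non-edge that is forced to be absent — concretely some $b$ in the $B$-part of $\alpha$ is non-adjacent to everything in the $C\cup D$ part of $\gamma$. Using the explicit equality \eqref{eq:bipindecomp} from Lemma~\ref{lem:bipindecomp}, perturb $\bm{d}$ to $\bm{d}'$ by lowering the degree of such a forced-present endpoint and raising the degree of a vertex whose only missing neighbor is that forced-absent one — this produces a $\bm{d}'\in\bS_2(\bm{d})$ violating a Tyshkevich-type count, hence non-graphic; equivalently, one checks directly that shifting the degree across the "cut" described by $(p,q)$ breaks the Erdős–Gallai/Gale–Ryser inequality at that threshold. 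Thus $(3)$ fails. The main obstacle here is bookkeeping: one has to pick the perturbation so that the witnessing inequality is violated and verify this from \eqref{eq:bipindecomp}; this is a finite computation but needs care about which side $p$ versus $q$ the moved degree lands on.

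For $(1)\Rightarrow(2)$: This is the heart of the lemma. Suppose $\bm{d}$ is indecomposable and fix a realization $G[A,B]$ and vertices $x\in A$, $y\in B$. Form the auxiliary "switch graph" on $A\cup B$ whose edges are exactly the pairs that alternate (edge of $G$ from one side, non-edge from the other); we want a cycle through $xy$ in the sense of the definition — that is, either $xy\in E(G)$ and we need an alternating cycle using it as an "edge", or $xy\notin E(G)$ and we need one using it as a "non-edge". Equivalently, consider $\bm{d}'$ obtained by the rank-$1$ perturbation corresponding to toggling $xy$: either $\bm{d}-\mathds{1}_x-\mathds{1}_y$ (if $xy\in E$) or $\bm{d}+\mathds{1}_x+\mathds{1}_y$ (if $xy\notin E$). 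Indecomposability, via Lemma~\ref{lem:bipindecomp}, means no threshold equality \eqref{eq:bipindecomp} holds, so strict inequalities hold on both sides, giving slack; this slack is precisely what guarantees $\bm{d}'$ is graphic and, more, that some realization $G'$ of $\bm{d}'$ differs from a suitable realization of $\bm{d}$ by a single $C_4$ through $xy$. Unwinding, $G\triangle G'$ (after possibly applying alternating-cycle switches that don't touch $x,y$) is an alternating cycle covering $xy$. I expect the genuine difficulty to be upgrading "$\bm{d}'$ is graphic" to "there is an alternating cycle in *this particular* $G$ through $xy$" — i.e., a connectivity statement on the switch graph rather than a mere counting statement; one handles it by taking a common realization, looking at the symmetric difference, and using indecomposability to rule out the degenerate case where the alternating structure disconnects $x$ from $y$ (which would exhibit a Tyshkevich cut and contradict $(1)$).
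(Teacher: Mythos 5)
The general shape of your plan — a cycle $(1)\Rightarrow(2)\Rightarrow(3)\Rightarrow(1)$ — is reasonable, and you have correctly identified the two load-bearing observations (forced edges in a Tyshkevich composition, and alternating cycles enabling switches). But two of your implications have real gaps that a reader could not fill from what you have written.

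First, your $(2)\Rightarrow(3)$ only treats the opposite-side perturbations $\bm{d}\pm(\mathds{1}_x+\mathds{1}_y)$ with $x\in A$, $y\in B$, where the toggle of the single pair $xy$ via an alternating cycle works exactly as you say. You mention the same-side case ($x,y$ both in $A$, $\bm{d}'=\bm{d}-\mathds{1}_x+\mathds{1}_y$) but give no argument. This case is genuinely not a direct consequence of ``$G$ is covered by alternating cycles'': what you need is an alternating $x$--$y$ \emph{path} starting on an edge, and if no such path exists, the repair requires first producing an alternating cycle on some $yz$ and then trading edges at $x$ and $y$ — an argument the paper runs via indecomposability (i.e.\ via condition $(1)$), not via $(2)$. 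As written, the arrow $(2)\Rightarrow(3)$ is incomplete.

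Second, your $(1)\Rightarrow(2)$ takes a detour through counting slack in~\eqref{eq:bipindecomp} and then gestures at ``upgrading to a connectivity statement''. You correctly flag this as the hard step, but the counting route does not actually produce an alternating cycle in the \emph{given} realization $G$. The direct structural argument is both shorter and what the paper does: fix $x\in A$, $y\in B$ with $xy\in E(G)$ and suppose there is no alternating cycle on $xy$. Let $A_1\cup B_1$ be the set of vertices reachable from $x$ by an alternating walk that starts with a non-edge; then $y\notin B_1$, and one checks that $G=G[A_1,B_1]\circ G[A\setminus A_1,B\setminus B_1]$ is a Tyshkevich decomposition, contradicting $(1)$. (The non-edge case is obtained by passing to the bipartite complement.) This reachability argument is the missing idea; it replaces your ``common realization, look at the symmetric difference'' sketch, which does not obviously localize the obstruction to the particular $G$ you fixed.

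Your $(3)\Rightarrow(1)$, by contrast, is essentially right but more complicated than necessary: rather than a same-side perturbation chosen to break a Gale--Ryser inequality, the paper simply takes $x\in A\cap V(G_1)$, $y\in B\cap V(G_2)$ (so $xy$ is forced in every realization) and observes that if $\bm{d}+\mathds{1}_x+\mathds{1}_y$ had a realization $G'$, then the symmetric difference $G\triangle G'$ would contain an alternating $x$--$y$ path starting on a non-edge, hence an alternating cycle on $xy$, hence a realization of $\bm{d}$ \emph{without} $xy$ — contradicting forcedness. This avoids the bookkeeping you flag as a concern.
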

\begin{proof}
	Suppose $\bm{d}$ is decomposable; let $G\in\cG(\bm{d})$ and say
	$G=G_1\circ G_2$. Take $x\in V(G_1)\cap A$ and $y\in V(G_2)\cap B$ from
	distinct color classes, thus $xy\in E(G)$. If $\exists G'\in
	\cG(\bm{d}+\mathds{1}_x+\mathds{1}_y)$, then take $G\triangle G'$: there
	$x$ and $y$ have one extra edge in $G'$ compared to $G$, therefore there
	is an alternating path joining $x$ to $y$ in $G$ starting on an
	non-edge, i.e., there is an alternating cycle on $xy$ in $G$. This means
	that there is a realization of $\bm{d}$ in which $xy$ is not an edge.
	Therefore $\bm{d}+\mathds{1}_x+\mathds{1}_y$ is not graphic.
	The proof is similar if $x\in V(G_1)\cap B$ and $y\in V(G_2)\cap A$
	(take the complement).

	In the other direction, suppose $\bm{d}$ is indecomposable. Let
	$G\in\cG(\bm{d})$ and $\bm{d}'\in \bS_2(\bm{d})$ be arbitrary. Suppose
	first, that $\bm{d}'=\bm{d}+\mathds{1}_x+\mathds{1}_y$ where $x\in A$
	and $y\in B$. If $xy\notin E(G)$, then $G+xy$ is a realization of
	$\bm{d}'$. If $xy\in E(G)$ and there is an alternating cycle $C$ on $xy$
	in $G$, then $G\triangle C+xy\in \cG(\bm{d}')$.

	If $xy\in E(G)$ is not contained in an alternating cycle in $G$, then
	let $A_1\subset A$ and $B_1\subset B$ be the set of vertices that are
	reachable from $x$ on an alternating path starting on a non-edge. Define
	$A_2=A\setminus A_1$ and $B_2=B\setminus B_1$. We must have $y\in B_2$,
	otherwise there is an alternating cycle on $xy$. Observe, that
	$G=G[A_1,B_1]\circ G[A_2,B_2]$, a contradiction.

	If $\bm{d}'=\bm{d}-\mathds{1}_x-\mathds{1}_y$ where $x\in A$
	and $y\in B$, take the complement to arrive in the previous case.

	Finally, we have $\bm{d}'=\bm{d}-\mathds{1}_x+\mathds{1}_y$ where
	$x,y\in A$ or $x,y\in B$. Without loss of generality, suppose that
	$x,y\in A$. Let $G\in\cG(\bm{d})$ be arbitrary. If there is an
	alternating path $P$ starting on a edge from $x$ to $y$ in $G$, then
	$G\triangle P\in \cG(\bm{d}')$. If there is no such alternating path,
	take a $z$ in $B$ such that $xz\in E(G)$. Then $yz\in E(G)$, too. As
	before, there exists an alternating cycle $C$ on $yz$ in $G$, because
	$\bm{d}$ is indecomposable. Since $C$ is an alternating cycle, $xz\notin
	E(G)$, thus $G\triangle C-xz+yz\in \cG(\bm{d}')$.
\end{proof}

\begin{lemma}
	Suppose that the minimum length of an alternating cycle covering $xy$ in $G$ is
	$2\ell+2$ and $G\in\cG(\bm{d})$. Then a graphic element of $\bS_2(\bm{d})$ is not
	distance-$(2\ell)$ strongly stable. Moreover, there is an induced copy of
	$H_0(\lceil\ell/3\rceil)$ in $G$.
\end{lemma}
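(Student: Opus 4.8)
The plan is to treat the two assertions in turn. Both rest on the standard symmetric‑difference argument for degree sequences at $\ell^1$-distance $2$ (the one already used above in the indecomposability characterization), combined with the minimality of the cycle.

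\textbf{First assertion.} Let $C$ be a shortest alternating cycle covering $xy$, so $|E(C)|=2\ell+2$. Assume first $xy\in E(G)$ and set $\bm{d}'=\bm{d}+\mathds{1}_x+\mathds{1}_y\in\bS_2(\bm{d})$. I would first note that $\bm{d}'$ is graphic: deleting $xy$ from $C$ leaves an alternating $x$--$y$ path both of whose end‑edges are non‑edges of $G$, and $G$ symmetric‑differenced with (the edge set of) that path realizes $\bm{d}'$. Then, for an \emph{arbitrary} $H\in\cG(\bm{d}')$, the symmetric‑difference reasoning shows that $E(G)\triangle E(H)$ contains an $x$--$y$ path $P$ alternating between edges and non‑edges of $G$ whose two end‑edges are non‑edges of $G$; since $xy\in E(G)$, the set $P+xy$ is then an alternating cycle covering $xy$, so $|E(P)|\ge 2\ell+1$, and $E(P)\subseteq E(G)\triangle E(H)$ gives $|E(G\triangle H)|\ge 2\ell+1>2\ell$. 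As $G\in\cG(\bm{d})$ and $\|\bm{d}-\bm{d}'\|_1=2$, this exhibits $\bm{d}'$ as a graphic element of $\bS_2(\bm{d})$ that is not distance-$(2\ell)$ strongly stable. If $xy\notin E(G)$ the same argument runs with $\bm{d}'=\bm{d}-\mathds{1}_x-\mathds{1}_y$ (here $\deg_G(x),\deg_G(y)\ge 1$ because $C$ reaches $x$ and $y$ along edges of $G$).

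\textbf{Second assertion, forced chords.} Write $C=v_0v_1\cdots v_{2\ell+1}v_0$ with $v_0=x$, $v_1=y$; after possibly replacing $G$ by its bipartite complement, take $v_0v_1\in E(G)$, so $v_iv_{i+1}\in E(G)$ exactly for even $i$, and set $a^{(j)}:=v_{2j}\in A$, $b^{(j)}:=v_{2j+1}\in B$ for $0\le j\le\ell$. The engine is a shortcutting observation: for a chord $v_pv_q$ of $C$ with $1\le p<q\le 2\ell+1$, replacing by $v_pv_q$ the arc of $C$ between $v_p$ and $v_q$ that contains the edge $xy$ yields a cycle through $xy$ of length $2\ell+3-(q-p)\le 2\ell+1<2\ell+2$, and this cycle is alternating precisely when the edge/non‑edge status of $v_pv_q$ differs from those of both $v_{p-1}v_p$ and $v_qv_{q+1}$; minimality of $C$ forbids this. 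Reading this off for $A$--$B$ chords according to the parities of $p$ and $q$ yields, for indices in $\{1,\dots,\ell\}$: $a^{(j')}b^{(j)}$ is a non‑edge of $G$ whenever $j'<j$, and an edge of $G$ whenever $j'\ge j+2$. Together with the cycle‑edges $a^{(j)}b^{(j)}\in E(G)$, these relations pin down the bipartite graph induced by any subfamily of the $a^{(j)}$'s and $b^{(j)}$'s.

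\textbf{Second assertion, extracting the half‑graph, and the obstacle.} Pick $J=\{j_1<\cdots<j_m\}\subseteq\{1,\dots,\ell\}$ with all gaps at least $2$; then $m=\lceil\ell/2\rceil$ is attainable, in particular $m\ge\lceil\ell/3\rceil$. For $j,j'\in J$ the forced relations give $a^{(j')}b^{(j)}\in E(G)\iff j'\ge j$, which is exactly the adjacency pattern of a half‑graph, so, ordering $J$ decreasingly on the $A$-side, the bipartite subgraph induced on $\{a^{(j)}:j\in J\}\cup\{b^{(j)}:j\in J\}$ is an induced copy of $H_0(m)$, hence contains an induced $H_0(\lceil\ell/3\rceil)$. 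The one piece of bookkeeping is that $v_0=x$ and $v_1=y$ must be excluded from $J$ — otherwise, in the edge case, the wrap‑around non‑edge $v_{2\ell+1}v_0$ and the $p=0$ shortcuts force $x$ adjacent to too many of the $b^{(j)}$, destroying the nesting — which is exactly why $J\subseteq\{1,\dots,\ell\}$; the slack between $\lceil\ell/3\rceil$ and $\lceil\ell/2\rceil$ comfortably absorbs this (in the complemented case one works with $a^{(j+1)}$ in place of $a^{(j)}$ and takes $J\subseteq\{0,\dots,\ell-1\}$). The genuinely delicate step is the parity case analysis behind the forced relations: for each candidate chord one must verify simultaneously that the shortcut cycle really passes through $xy$, is strictly shorter than $C$, and is alternating in the relevant parity regime, and then pick a vertex set on which the induced graph is \emph{exactly}, not merely contains, a half‑graph; everything else is routine symmetric‑difference manipulation.
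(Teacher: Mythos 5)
Your proposal is correct and follows essentially the same approach as the paper: symmetric-difference reasoning for the first assertion (with the sign of $\bm{d}'$ adapted to whether $xy$ is an edge), and a shortcut-chord argument along the minimal alternating cycle to force the nested adjacency pattern and extract the induced half-graph. One minor wording slip: you should replace the arc of $C$ that does \emph{not} contain $xy$ by the chord (your length formula $2\ell+3-(q-p)$ already reflects the correct choice), and your index set with gaps at least $2$ in fact yields an induced $H_0(\lceil\ell/2\rceil)$, comfortably exceeding the stated $\lceil\ell/3\rceil$.
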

\begin{proof}
	Notice that all of the conclusions are invariant on complementing $G$.
	By taking the complement of $G$, we may suppose that $xy\notin E(G)$.

	Take $\bm{d}':=\bm{d}+\mathds{1}_x+\mathds{1}_y$. For any realization
	$G'\in \cG(\bm{d}')$ we have $|E(G\triangle G')|>2\ell$, otherwise
	there is an alternating path of length at most $2\ell-1$ in $G$ which
	forms an alternating cycle of length $2\ell$ with $xy$.

	Let $C$ be an alternating cycle of length $2\ell+2$ on $xy$. Let
	$a_1:=x$ and $b_{\ell+1}:=y$. Let $a_i$ and $b_i$ be the vertices
	at distance $2i-2$ and $2i-1$ from $x$ on $C-xy$, respectively.

	Notice, that $a_i b_j\in E(G)$ if $i+1\ge j$, and $a_i b_j\notin E(G)$
	if $j\le i-2$, otherwise $C$ is not the shortest alternating cycle on
	$xy$. Let
	\begin{align*}
	A':=\{a_{3i-2}\ :\ i=1,\ldots,\lceil\ell/3\rceil\},\\
	B':=\{a_{3i-1}\ :\ i=1,\ldots,\lceil\ell/3\rceil\}.
	\end{align*}
	We have
	\begin{equation*}
		G[A',B']=(a_1,\emptyset)\circ (\emptyset,b_2)\circ
		(a_4,\emptyset)\circ\cdots\circ(\emptyset,b_k)
		\simeq H_0(\lceil\ell/3\rceil).
	\end{equation*}
\end{proof}

\section{The switch Markov chain}\label{sec:markov}

For the precise definition of Markov chains and an introduction to their theory,
the reader is referred to Durrett~\cite{durrett_probability:_2010}. To define
the unconstrained and bipartite %
switch Markov chains, it is sufficient to define their transition matrices.

\begin{definition}[unconstrained/bipartite switch Markov chain]\label{def:ucChain}
	Let $\bm{d}$ be an unconstrained or bipartite degree sequence on $n$
	vertices. The state space of the switch Markov chain $\cM(\bm{d})$ is
	$\cG(\bm{d})$. The transition probability between two different states
	of the chain is nonzero if and only if the corresponding realizations
	are connected by a switch, and in this case this probability is
	$\frac{1}{6}\binom{n}{4}^{-1}$. The probability that the chain stays at
	a given state is one minus the probability of leaving the given state.
\end{definition}

It is well-known that any two realizations of an unconstrained or bipartite
degree sequence can be transformed into one-another through a series of switches.

\medskip

The switch Markov chains defined are irreducible (connected), symmetric,
reversible, and lazy. Their unique stationary distribution is the uniform
distribution $\pi\equiv |\cG(\bm{d})|^{-1}$.

\begin{definition}
	The mixing time of a Markov chain $\cM$ is
	\[
		\tau_\cM(\varepsilon)=\min\left\{ t_0\ :\ \forall x\ \forall t\ge
		t_0\ \|P^{t}(x,\cdot) -\pi \|_1\le 2\varepsilon\right\},
	\]
	where $P^{t}(x,y)$ is the probability that when $\cM$ is started
	from $x$, then the chain is in $y$ after $t$ steps.
\end{definition}

\begin{definition}
	The switch Markov chain is said to be rapidly mixing on an infinite set
	of degree sequences $\cD$ if there exists a fixed polynomial
	$\mathrm{poly}(n,\log\varepsilon^{-1})$ which bounds the mixing time of
	the switch Markov chain on $\cG(\bm{d})$ for any
	$\bm{d}\in\cD$ (where $n$ is the length of $\bm{d}$).
\end{definition}

Sinclair's seminal paper describes a combinatorial method to bound the mixing
time.

\begin{definition}[Markov graph]
	Let $G(\cM(\bm{d}))$ be the graph whose vertices are
	realizations of $\bm{d}$ and two vertices are connected by an edge if
	the switch Markov chain on $\cG(\bm{d})$ has a positive
	transition probability between the two realizations.
\end{definition}

Let $\Gamma$ be a set of paths in $\cM(\bm{d})$. We say that $\Gamma$ is a
\emph{canonical path system} if for any two realizations $G,H\in \cG(\bm{d})$ there is
a unique $\gamma_{G,H}\in \Gamma$ which joins $G$ to $H$ in the Markov graph.
The load of $\Gamma$ is defined as
\begin{equation}\label{eq:rho}
	\rho(\Gamma)=\max_{P(e)\neq 0}\frac{|\{\gamma\in \Gamma\ :\ e\in
	E(\gamma)\}|}{|\cG(\bm{d})|\cdot P(e)},
\end{equation}
where $P(e)$ is the transition probability assigned to the edge $e$ of the
Markov graph (this is well-defined because the studied Markov chains are
symmetric). The next lemma follows from Proposition 1 and Corollary 4 of
Sinclair~\cite{sinclair_improved_1992}.

\begin{lemma}\label{lem:sinclair}
	If $\Gamma$ is a canonical path system for $\cM(\bm{d})$ then
	\[
		\tau_{\cM(\bm{d})}(\varepsilon)\le \rho(\Gamma)\cdot \ell(\Gamma)\cdot
		\left(\log(|\cG(\bm{d})|) +\log(\varepsilon^{-1})
		\right),
	\]
	where $\ell(\Gamma)$ is the length of the longest path in $\Gamma$.
\end{lemma}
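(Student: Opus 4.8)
The plan is to note that Lemma~\ref{lem:sinclair} is the canonical-path (multicommodity flow) bound of Sinclair~\cite{sinclair_improved_1992} repackaged for the chains $\cM(\bm{d})$, so the ``proof'' is a verification that the hypotheses of his Proposition~1 and Corollary~4 hold, followed by a substitution of the relevant normalizations. First I would record the structural facts already noted above: each $\cM(\bm{d})$ is finite, irreducible, reversible, and lazy, with uniform stationary distribution $\pi\equiv|\cG(\bm{d})|^{-1}$. Laziness matters because the holding probability at every state is at least $\tfrac12$ (there are at most $3\binom{n}{4}$ potential switches from any state), so every eigenvalue of the transition matrix $P$ lies in $[0,1]$; in particular the relevant quantity is the spectral gap $1-\lambda_1$, where $\lambda_1$ is the second-largest eigenvalue, and the most negative eigenvalue can be ignored.

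The core step is a Poincaré inequality extracted from the path system $\Gamma$. Regard $\Gamma$ as a unit flow that routes one unit from $G$ to $H$ along $\gamma_{G,H}$ for every ordered pair of realizations. With $\pi$ uniform, the congestion of this flow across a Markov-graph edge $e$ is $|\{\gamma\in\Gamma : e\in E(\gamma)\}|$ divided by $|\cG(\bm{d})|\,P(e)$, so its maximum over $e$ is exactly $\rho(\Gamma)$ from~\eqref{eq:rho}, while the longest flow path has length $\ell(\Gamma)$. Sinclair's Proposition~1 then gives
\[
	1-\lambda_1\ \ge\ \frac{1}{\rho(\Gamma)\,\ell(\Gamma)}.
\]
This is the one genuinely non-routine ingredient; its proof compares the Dirichlet form of $P$ against $\operatorname{Var}_\pi$ via a Cauchy--Schwarz estimate applied to the flow decomposition, and I would quote it rather than reprove it.

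Finally I would feed this into the standard $L^2$ bound for lazy reversible chains (Sinclair's Corollary~4): since laziness gives $\lambda_{\max}=\lambda_1$,
\[
	\bigl\|P^t(x,\cdot)-\pi\bigr\|_1\ \le\ \lambda_1^{\,t}\sqrt{\pi(x)^{-1}}\ \le\ |\cG(\bm{d})|^{1/2}\,e^{-t(1-\lambda_1)},
\]
which is at most $2\varepsilon$ once $t\ge(1-\lambda_1)^{-1}\bigl(\log|\cG(\bm{d})|+\log\varepsilon^{-1}\bigr)$. Substituting the spectral-gap bound $(1-\lambda_1)^{-1}\le\rho(\Gamma)\,\ell(\Gamma)$ yields
\[
	\tau_{\cM(\bm{d})}(\varepsilon)\ \le\ \rho(\Gamma)\,\ell(\Gamma)\bigl(\log|\cG(\bm{d})|+\log\varepsilon^{-1}\bigr),
\]
as claimed. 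The only obstacle is bookkeeping: matching the normalization of $\rho$ in~\eqref{eq:rho} with Sinclair's definition of congestion and tracking constants through Corollary~4. There is no new mathematics beyond~\cite{sinclair_improved_1992}, which is why in the paper it suffices to cite it.
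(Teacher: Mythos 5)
The paper gives no explicit proof of this lemma; it simply remarks that it ``follows from Proposition~1 and Corollary~4 of Sinclair~\cite{sinclair_improved_1992}.'' Your proposal correctly carries out exactly that derivation — verifying the hypotheses (finite, irreducible, reversible, lazy, uniform $\pi$), invoking Sinclair's Poincar\'e bound $1-\lambda_1\ge\bigl(\rho(\Gamma)\,\ell(\Gamma)\bigr)^{-1}$, and feeding it into his spectral mixing-time estimate — so it is correct and takes essentially the same approach as the paper.
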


Obviously, $\log(|\cG(\bm{d})|)\le n^2$, henceforth we focus on bounding
$\rho$ by a polynomial of $n$.

\section{Flow representation}\label{sec:flow}

In this section we introduce a flow representation of realizations of bipartite
degree sequences defined on $A_n$ and $B_n$ as their first and second color
classes, respectively.

Let $F=F_n=(A_n,B_n,\vec E)$ be a directed bipartite graph such that
\begin{itemize}
	\item ${a_i b_j}\in \vec E(F)$ if and only if $i\le j$,
	\item ${b_j a_i}\in \vec E(F)$ if and only if $j<i$.
\end{itemize}
Every edge in $uv\in \vec E(F)$ has capacity 1 in the direction from $u$ to $v$.
We will only consider integer flows, so any admissible flow in $F$ is a subgraph
of $F$. If the sum of the flow injected at the sources is $r\in \bN$, then the
flow is called an $r$-flow.

\begin{definition}
	The flow representation $\vec\nabla(G)$ of a bipartite graph
	$G[A_n,B_n]$ is the subgraph of $F_n$ obtained as follows: take the
	symmetric difference $\nabla(G)=G[A_n,B_n]\triangle H_0(n)$, then make
	$\nabla(G)$ directed such that each edge in $\nabla(G)$ matches its
	orientation in $F_n$.
\end{definition}

\begin{lemma}\label{lem:equivalence}
	The flow representation ${\vec\nabla(G)}$ is an admissible flow
	in $F$. Moreover,
	\begin{itemize}
		\item every $a_i\in A$ is a \textbf{source} of
			${(\deg_{G}(a_i)-\deg_{H_0(n)}(a_i))}^-$ commodity,\\
			every $b_i\in B$ is a \textbf{source} of
			${(\deg_{G}(b_i)-\deg_{H_0(n)}(b_i))}^+$ commodity,
		\item every $a_i\in A$ is a \textbf{sink} of
			${(\deg_{G}(a_i)-\deg_{H_0(n)}(a_i))}^+$ commodity,\\
			every $b_i\in B$ is a \textbf{sink} of
			${(\deg_{G}(b_i)-\deg_{H_0(n)}(b_i))}^-$ commodity.
	\end{itemize}
	Conversely, such a flow is the flow representation of some
	$G[A_n,B_n]$.
\end{lemma}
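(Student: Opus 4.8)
The plan is to unwind the definitions: the statement is a bookkeeping identity relating the degree sequence of $G$ to a flow balance, and the only thing that needs genuine attention is keeping the orientation convention of $F_n$ consistent throughout.

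First I would record the basic observation that, after forgetting orientations, $\vec\nabla(G)$ is the undirected graph $\nabla(G)=G[A_n,B_n]\triangle H_0(n)$, a subgraph of the complete bipartite graph $K_{A_n,B_n}$. Since $F_n$ contains exactly one arc for each pair $\{a_i,b_j\}$ — the arc $a_ib_j$ when $i\le j$ (i.e.\ when $a_ib_j\in E(H_0(n))$) and the arc $b_ja_i$ when $j<i$ (i.e.\ when $a_ib_j\notin E(H_0(n))$) — the instruction ``orient each edge of $\nabla(G)$ as in $F_n$'' is unambiguous and yields a subgraph of $F_n$. Consequently every arc of $F_n$ carries flow $0$ or $1$, so the unit capacities are respected; it then remains only to determine the excess (net inflow) at each vertex.

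Next I would compute that excess. Fix $a_i$ and partition the vertices $b_j$ according to whether $j\ge i$ (neighbours of $a_i$ in $H_0(n)$, whose arc $F_n$ orients away from $a_i$) or $j<i$ (non-neighbours in $H_0(n)$, whose arc $F_n$ orients towards $a_i$). By the description of $\vec\nabla(G)$, the arcs leaving $a_i$ are exactly the pairs $j\ge i$ with $a_ib_j\notin E(G)$, and the arcs entering $a_i$ are exactly the pairs $j<i$ with $a_ib_j\in E(G)$. Counting the edges of $G$ at $a_i$ in these two ranges and using $\deg_{H_0(n)}(a_i)=n+1-i=|\{j:\ j\ge i\}|$, one obtains
\[
\deg_G(a_i)=\bigl(\deg_{H_0(n)}(a_i)-\#\{\text{arcs leaving }a_i\}\bigr)+\#\{\text{arcs entering }a_i\},
\]
so the net inflow at $a_i$ equals $\deg_G(a_i)-\deg_{H_0(n)}(a_i)$; hence $a_i$ is a sink of $\bigl(\deg_G(a_i)-\deg_{H_0(n)}(a_i)\bigr)^{+}$ and a source of $\bigl(\deg_G(a_i)-\deg_{H_0(n)}(a_i)\bigr)^{-}$ units of commodity (where $x^{+}=\max\{x,0\}$ and $x^{-}=\max\{-x,0\}$, so at most one of the two is nonzero). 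For $b_j$ the same count applies, except that $F_n$ orients the arcs of $H_0(n)$ towards $b_j$ and the non-arcs away from it, so the roles of incoming and outgoing arcs swap; the net inflow at $b_j$ comes out to $\deg_{H_0(n)}(b_j)-\deg_G(b_j)$, which is exactly the source/sink assignment in the statement.

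Finally, for the converse I would exhibit the inverse map explicitly. Given an admissible integer flow $\phi$ in $F_n$ — equivalently, since each arc has capacity $1$ and there is at most one arc between any $a_i$ and $b_j$, a simple subgraph of $F_n$ — put $G:=H_0(n)\triangle\phi$, forgetting the orientation of $\phi$. Then $\nabla(G)=G\triangle H_0(n)=\phi$ as undirected graphs, and because the arcs of $\phi$ already agree with their $F_n$-orientation, re-orienting $\nabla(G)$ as in $F_n$ returns $\phi$; that is, $\vec\nabla(G)=\phi$. Together with the forward direction this shows that $G\mapsto\vec\nabla(G)$ is a bijection between the bipartite graphs $G[A_n,B_n]$ and the admissible integer flows in $F_n$, under which the source/sink data of the flow and the deviation of the degree sequence of $G$ from that of $H_0(n)$ determine one another by the formulas above. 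I do not expect a real obstacle here; the only delicate points are maintaining the $F_n$ orientation convention (edges of $H_0(n)$ oriented $A\to B$, non-edges $B\to A$) with the attendant $^{+}/^{-}$ bookkeeping, and noting that ``admissible integer flow'' coincides with ``simple subgraph of $F_n$'' precisely because all capacities equal $1$.
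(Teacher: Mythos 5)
Your proof is correct and follows the same route as the paper's: compute in-degree minus out-degree of $\vec\nabla(G)$ at each vertex by comparing edges of $G$ against those of $H_0(n)$ in the two index ranges forced by the orientation of $F_n$, and for the converse take the symmetric difference of the undirected flow with $H_0(n)$. You spell out the index bookkeeping more explicitly than the paper, which just records the resulting identity $\deg_G-\deg_{H_0(n)}=\varrho_{\vec\nabla(G)}-\delta_{\vec\nabla(G)}$ (and the negation for $B$) directly, but the argument is the same.
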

\begin{proof}
	Observe the structure of $H_0(n)$ on Figure~\ref{fig:halfgraph}.
	We have
	\begin{align*}
		\deg_G(a_i)-d_{H_0(n)}(a_i)&=
		\deg_{\nabla(G)}(a_i,\{b_{1},\ldots,b_{i-1}\})-
		\deg_{\nabla(G)}(a_i,\{b_{i},\ldots,b_{n}\})\\
		&=\varrho_{\vec\nabla(G)}(a_i)-\delta_{\vec\nabla(G)}(a_i),\\
		\deg_G(b_i)-d_{H_0(n)}(b_i)&=
		\deg_{\nabla(G)}(b_i,\{a_{i+1},\ldots,a_{n}\})-
		\deg_{\nabla(G)}(b_i,\{a_{1},\ldots,a_{i}\})\\
		&=\delta_{\vec\nabla(G)}(a_i)-\varrho_{\vec\nabla(G)}(a_i).
	\end{align*}
	In the other direction, remove the orientation from the flow and take
	its symmetric difference with $H_0(n)$ to obtain the appropriate
	$G[A_n,B_n]$.
\end{proof}
\begin{corollary}\label{cor:equivalence}
	For any $\bm{d}\in \bS_{2k}(\cH_0)$, the function $G\mapsto
	\vec\nabla(G)$ is a bijection between $\cG(\bm{d})$ and such
	$k$-flows on $F$ where the sources and sinks
	prescribed according to Lemma~\ref{lem:equivalence}.
\end{corollary}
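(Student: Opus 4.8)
The plan is to read off the statement directly from Lemma~\ref{lem:equivalence}, once we check that the degree data $\bm d\in\bS_{2k}(\cH_0)$ translates exactly into the prescribed source/sink values and into the total flow being $k$. First I would fix $\bm d\in\bS_{2k}(\cH_0)$, say $\bm d$ lies in the sphere of radius $2k$ around $\bm h_0(n)$, so that $\|\bm d-\bm h_0(n)\|_1=2k$ and $\|\bm d^A\|_1=\|\bm d^B\|_1$. For any realization $G\in\cG(\bm d)$, Lemma~\ref{lem:equivalence} already asserts that $\vec\nabla(G)$ is an admissible flow in $F$ whose source/sink values at each vertex are the positive and negative parts of $\deg_G(v)-\deg_{H_0(n)}(v)=\bm d(v)-\bm h_0(n)(v)$; these quantities depend only on $\bm d$, not on the choice of $G$, which is the whole point. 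So the map $G\mapsto\vec\nabla(G)$ lands in the set of admissible flows on $F$ with exactly the prescribed sources and sinks.

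Next I would compute the total flow value. The amount of commodity injected at the sources is $\sum_{v}(\bm d(v)-\bm h_0(n)(v))^+$, and by the equal-column-sum condition $\|\bm d^A\|_1=\|\bm d^B\|_1=\|\bm h_0(n)^A\|_1=\|\bm h_0(n)^B\|_1$ the vector $\bm d-\bm h_0(n)$ has zero coordinate sum, so $\sum_v(\bm d(v)-\bm h_0(n)(v))^+=\tfrac12\|\bm d-\bm h_0(n)\|_1=k$. (Here one should note the sign convention in Lemma~\ref{lem:equivalence}: on the $A$-side a vertex is a source according to the negative part and on the $B$-side according to the positive part; since $F$ is oriented so that flow runs out of every $a_i$ along $\nabla(G)$-edges to lower-indexed $b_j$'s and out of every $b_j$ to higher-indexed $a_i$'s, these two contributions measure the same total out-flow, and the bookkeeping matches up.) Hence the flow is a $k$-flow, matching the statement.

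For the converse and bijectivity, I would invoke the second half of Lemma~\ref{lem:equivalence}: given any admissible flow on $F$ with the prescribed sources and sinks, removing orientations and taking the symmetric difference with $H_0(n)$ produces a bipartite graph $G[A_n,B_n]$, and by the degree computation in that lemma's proof this $G$ has degree sequence exactly $\bm d$. Since $\nabla(\cdot)$ and $\vec\nabla(\cdot)$ are defined by taking symmetric difference with the fixed graph $H_0(n)$ (an involution) and then by a deterministic re-orientation forced by $F$, the two directions are mutually inverse, so the map is a bijection. The only genuinely delicate point is keeping the orientation/sign conventions consistent between ``source of $(\cdot)^-$'' on the $A$-side and ``source of $(\cdot)^+$'' on the $B$-side and verifying that this is exactly what makes the net flow at each vertex equal $\bm d(v)-\bm h_0(n)(v)$ with the correct sign; everything else is immediate from Lemma~\ref{lem:equivalence}.
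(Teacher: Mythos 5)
Your overall structure is right (the Corollary is just Lemma~\ref{lem:equivalence} plus the bookkeeping that the flow value equals $k$, and the two directions are inverse to each other via symmetric difference with $H_0(n)$), but the central arithmetic step contains a genuine error. You write that $\|\bm{d}^A\|_1=\|\bm{d}^B\|_1=\|\bm{h}_0(n)^A\|_1=\|\bm{h}_0(n)^B\|_1$ and conclude that $\bm{d}-\bm{h}_0(n)$ has zero coordinate sum. The first and third equalities hold (each counts the edges of the respective bipartite graph), but the middle equality $\|\bm{d}^B\|_1=\|\bm{h}_0(n)^A\|_1$ is false in general: a perturbation of the form $\bm{d}=\bm{h}_0(n)+\mathds{1}_{a_n}+\mathds{1}_{b_1}$ lies in $\bS_2(\cH_0)$ but has one more edge, so $\sum_v(\bm{d}(v)-\bm{h}_0(n)(v))=2\neq 0$ and $\sum_v(\bm{d}(v)-\bm{h}_0(n)(v))^+ =2\neq 1$. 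Your claim that ``the vector $\bm{d}-\bm{h}_0(n)$ has zero coordinate sum'' is therefore wrong, and so is the formula $\sum_v(\cdot)^+=\tfrac12\|\cdot\|_1$.

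Moreover, $\sum_v(\bm{d}(v)-\bm{h}_0(n)(v))^+$ is not the quantity of interest: per Lemma~\ref{lem:equivalence}, the total injected commodity is $\sum_{a\in A}(\bm{d}(a)-\bm{h}_0(n)(a))^- + \sum_{b\in B}(\bm{d}(b)-\bm{h}_0(n)(b))^+$, which mixes negative and positive parts across the two sides. You flag this sign convention in the parenthetical, but then assert that ``the bookkeeping matches up'' without carrying the corrected computation through. The correct argument is: writing $e_v=\bm{d}(v)-\bm{h}_0(n)(v)$, the source total plus the sink total is $\sum_v|e_v|=\|\bm{d}-\bm{h}_0(n)\|_1=2k$, while the source total minus the sink total is $\sum_{b}e_b-\sum_{a}e_a$, which vanishes because $\sum_a e_a = \|\bm{d}^A\|_1-\|\bm{h}_0^A\|_1 = \|\bm{d}^B\|_1-\|\bm{h}_0^B\|_1 = \sum_b e_b$. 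Hence both totals equal $k$, and the rest of your proposal (applying the converse direction of Lemma~\ref{lem:equivalence} and noting that $\triangle H_0(n)$ is an involution) does establish the bijection.
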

For example: every flow representation of a realization of
\begin{equation*}
	\bm{h}_0(n)-\mathds{1}_{a_1}+2\cdot\mathds{1}_{b_2}+\mathds{1}_{a_7}
	-2\cdot\mathds{1}_{b_8}
\end{equation*}
is a $3$-flow with sources at $a_1$ and $b_2$, and sinks as at $a_7$ and $b_8$;
see Figure~\ref{fig:flow}.

\begin{figure}[H]
\centering
\begin{tikzpicture}[scale=1.7]
\tikzstyle{vertex}=[draw,circle,fill=black,minimum size=3,inner sep=0]

\foreach \j in {1,...,8}
{%
	\node[vertex] (y\j) at (\j,0) [label=south:{$a_{\j}$}] {};
	\node[vertex] (x\j) at (\j+0.5,1) [label=north:{$b_{\j}$}] {};
}

\draw (y1) edge[->] (x3)(x3) edge[->] (y4)(y4) edge[->] (x5)
	(x5) edge[->] (y8)(y8) edge[->] (x8)
	(x2) edge[->] (y4)(y4) edge[->] (x6)
	(x6) edge[->] (y7)(y7) edge[->] (x8)
	(x2) edge[->] (y3)(y3) edge[->] (x5)
	(x5) edge[->] (y7);
\end{tikzpicture}
\caption{The flow representation of a realization of a degree sequence from
$B_{6}(h_0(8))$.}%
\label{fig:flow}
\end{figure}
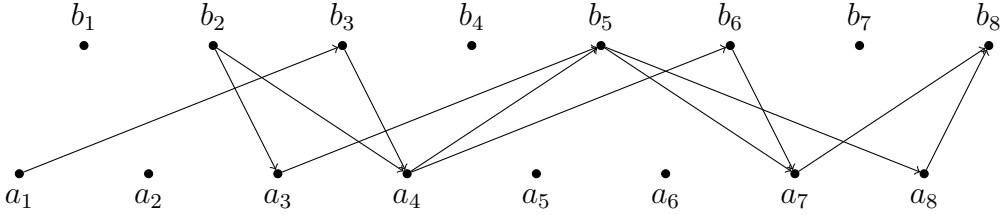

\section{Proof of Theorem~\ref{thm:main}: rapid mixing on
\texorpdfstring{$B_{2k}(\cH_0)$}{the 2𝑘-ball around 𝓗₀}}\label{sec:main}

\subsection{Overview of the proof}

Without loss of generality $\bm{d}\in \bS_{2k}(\cH_0)$. Let $X,Y \in
\cG(\bm{d})$ be two distinct realizations. We will define a switch sequence
\begin{equation*}
	\gamma_{X,Y}: X=Z_0,Z_1,\ldots, Z_{t}=Y,
\end{equation*}
We will also define a set of corresponding encodings
\begin{equation*}
	L_0(X,Y),L_1(X,Y), \ldots,L_{t}(X,Y).
\end{equation*}
The canonical path system $\Gamma:=\{\gamma_{X,Y}\ |\ X,Y\in
\cG(\bm{d})\}$ on $G(\cM(\bm{d}))$ will satisfy the
following two properties:
\begin{itemize}
	\item \textbf{Reconstructible:} there is an algorithm that for each
		$i$, takes $Z_i$ and $L_i(X,Y)$ as an input and outputs the
		realizations $X$ and $Y$.

	\item \textbf{Encodable in $\cG(\bm{d})$:} the total number of
		encodings on each vertex of $G(\cM(\bm{d}))$ is at
		most a $\mathrm{poly}_k(n)$ factor larger than $|\cG(\bm{d})|$.
\end{itemize}
The ``Reconstructible'' property ensures that the number of canonical paths
traversing a vertex (and thus an edge) of the Markov graph $\cM(\bm{d})$ is at
most the size of the set of all possible encodings. Subsequently, by
substituting into Equation~\eqref{eq:rho}, the ``Encodable in $\cG(\bm{d})$''
property implies that $\rho(\Gamma)=\cO(\mathrm{poly}_k(n))$.
According to Lemma~\ref{lem:sinclair}, the last bound means that the bipartite
switch Markov chain is rapidly mixing.

\medskip

Now we give a description of how the $X=Z_0,Z_1, \ldots, Z_{t+1}=Y$ canonical
path is constructed. The main idea is to morph $X$ into $Y$ ``from left to
right'': a region of constant width called the \textbf{buffer} is moved
peristaltically through $A_n\cup B_n$, consuming $X$ on its right and producing
$Y$ on its left; see Figure~\ref{fig:buffer}.

The encoding $L_i$ will contain a realization whose structure is similar to
$Z_i$, but the roles of $X$ and $Y$ are reversed. Furthermore, $L_i$ will contain
the position of the buffer and some additional information about the vertices in
the buffer.

\begin{figure}[H]
\centering
\begin{tikzpicture}[decoration={brace,mirror,amplitude=7}]

\node () at (3,3.5){\large The structure of a typical intermediate realization $Z_i$};
\node (buffer) at (3,2){\Large Buffer};

\draw(1,1)--(5,1)--(5,3)--(1,3)--(1,1);
\draw(5,2.5)--(10,2.5);
\draw(1,2.5)--(-4,2.5);
\draw(5,1.5)--(10,1.5);
\draw(1,1.5)--(-4,1.5);

\node () at (7.5,2){End of $X$};
\node () at (-1.5,2){Beginning of $Y$};

\node () at (-3.7,2.8){$b_1$};
\node () at (-3.2,2.8){$b_2$};
\node () at (-1.5,2.8){$\cdots$};
\node () at (0.5,2.8){$b_i$};
\node () at (9.7,2.8){$b_n$};
\node () at (9,2.8){$b_{n-1}$};
\node () at (7.5,2.8){$\cdots$};
\node () at (5.7,2.8){$b_{i+z+1}$};

\node () at (-3.7,1.2){$a_1$};
\node () at (-3.2,1.2){$a_2$};
\node () at (-1.5,1.2){$\cdots$};
\node () at (0.5,1.2){$a_i$};
\node () at (9.7,1.2){$a_{n}$};
\node () at (9,1.2){$a_{n-1}$};
\node () at (7.5,1.2){$\cdots$};
\node () at (5.7,1.2){$a_{i+z+1}$};

\draw [decorate] ([yshift=-5mm]1,1.3) --node[below=3mm]{Constant width} ([yshift=-5mm]5,1.3);

\begin{scope}[shift={(0,-5)}]%

\node () at (3,3.5){\large The realization in the corresponding $L_i$};
\node (buffer) at (3,2){\Large Buffer};

\draw(1,1)--(5,1)--(5,3)--(1,3)--(1,1);
\draw(5,2.5)--(10,2.5);
\draw(1,2.5)--(-4,2.5);
\draw(5,1.5)--(10,1.5);
\draw(1,1.5)--(-4,1.5);

\node () at (7.5,2){End of $Y$};
\node () at (-1.5,2){Beginning of $X$};

\node () at (-3.7,2.8){$b_1$};
\node () at (-3.2,2.8){$b_2$};
\node () at (-1.5,2.8){$\cdots$};
\node () at (0.5,2.8){$b_i$};
\node () at (9.7,2.8){$b_n$};
\node () at (9,2.8){$b_{n-1}$};
\node () at (7.5,2.8){$\cdots$};
\node () at (5.7,2.8){$b_{i+z+1}$};

\node () at (-3.7,1.2){$a_1$};
\node () at (-3.2,1.2){$a_2$};
\node () at (-1.5,1.2){$\cdots$};
\node () at (0.5,1.2){$a_i$};
\node () at (9.7,1.2){$a_{n}$};
\node () at (9,1.2){$a_{n-1}$};
\node () at (7.5,1.2){$\cdots$};
\node () at (5.7,1.2){$a_{i+z+1}$};

\draw [decorate] ([yshift=-5mm]1,1.3) --node[below=3mm]{Constant width}
	([yshift=-5mm]5,1.3);

\end{scope}

\end{tikzpicture}
\caption{A realization along $\gamma_{X,Y}$ and the main part of the associated
encoding.}%
\label{fig:buffer}
\end{figure}

Let $\overline{A}_i=A_n\setminus A_i$ and $\overline{B}_i=B_n\setminus B_i$.
Also, let $U_i=A_i\cup B_i$ and $\overline{U}_i=\overline{A}_i\cup
\overline{B}_i$. The following lemma shows the existence of a suitable buffer
which can be used to interface two different realizations as displayed on
Figure~\ref{fig:buffer}.

\begin{lemma}\label{lem:buffers}
	If $i,z\in \bN$ satisfy $0 \leq i \leq n-z$ and
	$2k+\sqrt{2k}+1\leq z$, then there is a realization
	$T_{X,Y}[i+1,i+z]\in \cG(\bm{d})$ with the following properties:
	\begin{itemize}
		\item $U_{i}$ induces identical subgraphs in $T_{X,Y}[i+1,i+z]$
			and $Y$, and
		\item $\overline{U}_{i+z}$ induces identical subgraphs in
			$T_{X,Y}[i+1,i+z]$ and $X$.
	\end{itemize}
	For $k=1$, even $z=1$ is sufficient.
\end{lemma}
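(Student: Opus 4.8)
The plan is to construct $T_{X,Y}[i+1,i+z]$ by "gluing" the left part of $Y$ and the right part of $X$ along a buffer window, and then to repair the degree sequence inside the window using a flow/Hall-type argument. Concretely, I would start from the disjoint pieces $Y[U_i]$ (the graph $Y$ restricted to vertices $a_1,\dots,a_i,b_1,\dots,b_i$) and $X[\overline U_{i+z}]$ (the graph $X$ restricted to $a_{i+z+1},\dots,a_n,b_{i+z+1},\dots,b_n$), together with all edges of $F_n$ that necessarily appear in every realization of $\bm d$ between these two blocks and the window $\{a_{i+1},\dots,a_{i+z},b_{i+1},\dots,b_{i+z}\}$ — namely the "forced" edges coming from the half-graph structure. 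The point of Corollary~\ref{cor:equivalence} is that realizations of $\bm d$ correspond bijectively to $k$-flows on $F_n$ with prescribed sources and sinks, so it is cleaner to phrase everything in the flow representation: $\vec\nabla(Y)$ restricted to arcs entirely inside $U_i$, and $\vec\nabla(X)$ restricted to arcs entirely inside $\overline U_{i+z}$, are fixed, and I must complete them to an admissible $k$-flow $\vec\nabla(T_{X,Y}[i+1,i+z])$ on all of $F_n$ using only arcs that touch the window.

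The key steps, in order, are: (1) Translate the two "identical subgraph" requirements into the statement that a $k$-flow on $F_n$ is prescribed on the arc set $\vec E(F_n[U_i])$ (forced to equal $\vec\nabla(Y)$ there) and on $\vec E(F_n[\overline U_{i+z}])$ (forced to equal $\vec\nabla(X)$ there), and compute the net flow that must cross into/out of the window from each side — this is a bounded quantity because $\bm d\in\bS_{2k}(\cH_0)$ forces $\|\vec\nabla(\cdot)\|$ (total flow) to be exactly $k$, so at most $k$ units cross each cut, and the sources/sinks outside the window are the same for $X$ and $Y$. (2) Observe that all arcs of $F_n$ incident to a window vertex of $B$, i.e. to some $b_j$ with $i+1\le j\le i+z$, reach far into both blocks: $b_j$ has out-arcs to $a_1,\dots,a_j$ and in-arcs from $a_{j+1},\dots,a_n$; similarly $a_j$ for $i+1\le j\le i+z$ has out-arcs to $b_j,\dots,b_n$ and in-arcs from $b_1,\dots,b_{j-1}$. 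So the window is richly connected to both sides, and the residual demand (the $O(k)$ units that must be routed from the left block across the window to the right block, or vice versa, plus any sources/sinks lying in the window) can be satisfied provided $z$ is large enough to host $O(k)$ internally-disjoint routing arcs without exceeding capacity $1$ on any arc. (3) Carry out the routing: it suffices to find $O(k)$ arc-disjoint paths through the window, and the width condition $z\ge 2k+\sqrt{2k}+1$ is exactly what is needed — the $\sqrt{2k}$ term presumably comes from the fact that routing within the window itself may force a few "layers" of window vertices to be used, in a pattern reminiscent of the half-graph (where $m$ units of flow confined to a block of half-graph vertices need roughly $\sqrt m$ vertices). I would make this precise by an explicit flow decomposition or by invoking the max-flow–min-cut theorem on the window subnetwork with the residual demands as terminals, checking that every cut has capacity $\ge$ the demand when $z\ge 2k+\sqrt{2k}+1$.

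The main obstacle is step (3): verifying that the window of width $z$ can always absorb the discrepancy between $X$ and $Y$, i.e. proving the min-cut bound. The subtlety is that flow confined to a short interval of half-graph vertices is "congested" — arcs $a_pb_q$ only exist for $p\le q$, so a unit entering the window at $a_p$ and leaving at $b_q$ with $q<p$ must detour through intermediate vertices, and several such units compete for the same scarce arcs. This is precisely the phenomenon behind the exponential count in Corollary~\ref{cor:orderofmagnitude}, and controlling it is where the $\sqrt{2k}$ slack is spent; I expect the cleanest route is to sort the $O(k)$ residual-flow units by "how far left-to-right they must move," route them greedily in that order, and show each uses at most one fresh window vertex on each side, so $2k$ vertices plus an $O(\sqrt k)$ buffer for the internal detours suffice. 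The $k=1$ case is genuinely special — a single unit of flow can always be rerouted through one extra vertex — which is why $z=1$ works there, and this can be checked directly as a sanity-check on the general bound.
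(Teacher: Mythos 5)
Your proposal matches the paper's own proof: both pass to the flow representation via Corollary~\ref{cor:equivalence}, freeze the flow of $Y$ on $U_i$ and of $X$ on $\overline U_{i+z}$, and complete the buffer by verifying a max-flow/min-cut (Menger) condition on the window subnetwork, with the $\sqrt{2k}$ slack coming exactly from the half-graph congestion you describe (in the paper it surfaces as the bound $\binom{z-2k}{2}\ge k$ in the cut verification). Your plan correctly identifies the key tool and the source of the width condition; the paper simply carries out the cut-condition case analysis that you defer, using an auxiliary network $F'$ that excludes direct left-block-to-right-block arcs (which is why the $k=1$, $z=1$ case needs a separate check, as the paper notes).
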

\begin{proof}
	We will work with the flow representation of $X$ and $Y$. Since $X$ and
	$Y$ are the realizations of the same degree sequence, the source-sink
	distribution in their corresponding flow representation is identical.
	It is sufficient to design a flow which joins the flow $\vec\nabla(X)$
	leaving $U_i$ and redirects it to the vertices in $\overline{U}_{i+z}$
	with the same distribution as $\vec\nabla(Y)$ flows into them from
	$U_{i+z}$.

	The case $k=z=1$ can be manually checked at this point.

	To achieve the outlined goal for any $k$, we define an auxiliary network
	$F'$ and prescribe the flow corresponding to the buffer on it. Let
	$e_D(W,Z)$ be the number of edges of $D$ that are directed from $W$ to
	$Z$.
	\begin{align*}
		A_X&:=\{ a_j\in A_i\ |\ e_{\vec\nabla(X)}(a_j,
		\overline{B}_i)>0\}\\
		B_X&:=\{ b_j\in B_i\ |\ e_{\vec\nabla(X)}(b_j,
		\overline{A}_i)>0\}\\
		A_Y&:=\{ a_j\in \overline{A}_{i+z}\ |\
		e_{\vec\nabla(Y)}(B_{i+z},a_j)>0\}\\
		B_Y&:=\{ b_j\in \overline{B}_{i+z}\ |\
		e_{\vec\nabla(Y)}(A_{i+z},b_j)>0\}\\
		A'&:=A_X\cup(A_{i+z}\setminus A_i)\cup A_Y\\
		B'&:=B_X\cup(B_{i+z}\setminus B_i)\cup B_Y
	\end{align*}
	The underlying network $F'$ is a subgraph of $F$:
	\begin{align*}
		F'&:=F[A',B']-E(F[A_X\cup A_Y,B_X\cup B_Y]),
	\end{align*}
	i.e., the flow cannot use edges between $A_X$, $B_X$, $A_Y$, $B_Y$.
	Note, that to prove the lemma for $k=z=1$, one has to use edges of
	$F[A_X,B_Y]$ and $F[A_Y,B_X]$.

	The flow in the buffer will be a subgraph $W\subset F'$.
	Let us define $f:A'\cup B'\to \bZ$:
	\begin{align*}
		f(a_j)&:=\left\{
		\begin{array}{ll}
			e_{\vec\nabla(X)}(a_j,\overline{B}_i), & \text{ if }
			a_j\in A_X\\
			-e_{\vec\nabla(Y)}(B_{i+z},a_j), & \text{ if } a_j\in
			A_Y\\
			\deg_{H_0(n)}(a_j)-\deg_{\bm{d}}(a_j), & \text{ if
			}a_j\in A_{i+z}\setminus A_i
		\end{array}
		\right.\\
		f(b_j)&:=\left\{
		\begin{array}{ll}
			e_{\vec\nabla(X)}(b_j,\overline{A}_i), & \text{ if }
			b_j\in B_X\\
			-e_{\vec\nabla(Y)}(A_{i+z},b_j), & \text{ if } b_j\in
			B_Y\\
			\deg_{\bm{d}}(b_j)-\deg_{H_0(n)}(b_j), & \text{ if
			}b_j\in B_{i+z}\setminus B_i
		\end{array}
		\right.
	\end{align*}
	We prescribe sources and sinks in $W$ as follows (recall
	Lemma~\ref{lem:equivalence}):
	\begin{align*}
		\delta_W(a_j)-\varrho_W(a_j)&=f(a_j)\quad\forall a_j\in A'\\
		\delta_W(b_j)-\varrho_W(a_j)&=f(b_j)\quad\forall b_j\in B'
	\end{align*}
	If such a $W$ exists, then $\vec\nabla(X)[A_i,B_i]+W+
	\vec\nabla(Y)[\overline{A}_{i+z},\overline{B}_{i+z}]$ is a $k$-flow
	which, according to Corollary~\ref{cor:equivalence}, corresponds to a
	graph whose degree sequence is $\bm{d}$.

	The existence of $W$ is proved using Menger's theorem on the number of
	edge-disjoint directed $st$-paths. It is sufficient to show that any
	$S\subseteq A'\cup B'$ satisfies the cut-condition:
	\begin{equation}\label{eq:cut}
		\delta_{F'}(S)\ge \sum_{s\in S}{f(s)}
	\end{equation}
	Trivially, the right-hand side is at most $k$. Let us take an $S$ for
	which $\delta_{F'}(S)-\sum_{s\in S}{f(s)}$ is minimal. We claim that
	the following four statements hold:
	\begin{itemize}
		\item If $|S\cap (A_{i+z}\setminus A_i)|> k$, then $B_Y\subset
			S$.
		\item If $|S\cap (B_{i+z}\setminus B_i)|> k$, then $A_Y\subset
			S$.
		\item If $|S\cap (A_{i+z}\setminus A_i)|< z-k$, then $B_X\cap
			S=\emptyset$.
		\item If $|S\cap (B_{i+z}\setminus B_i)|< z-k$, then $A_X\cap
			S=\emptyset$.
	\end{itemize}
	We only prove the first statement because the rest can be shown
	analogously. Suppose $|S\cap (A_{i+z}\setminus A_i)|>k$ and $b_j\in
	B_Y$, but $b_j\notin S$. Moving $b_j$ into $S$ changes the difference
	between the two sides of~\eqref{eq:cut} by
	\begin{equation*}
		-|S\cap (A_{i+z}\setminus A_i)|-f(b_j)<
		-k+e_{\vec\nabla(Y)}(A_{i+z},b_j)\le 0,
	\end{equation*}
	which contradicts the minimality of $S$.

	Finally, we have four cases. In each case we show that~\eqref{eq:cut} holds.
	\begin{itemize}
		\item \textbf{Case 1}: $|S\cap (A_{i+z}\setminus A_i)|\le k$
			and $|S\cap (B_{i+z}\setminus B_i)|\ge z-k$. We have
			\begin{equation*}
				\delta_{F'}(S)\ge e_{F'}\left(S\cap (B_{i+z}
				\setminus B_i),(A_{i+z}\setminus A_i)\setminus
				S\right)\ge \sum_{r=1}^{z-2k-1}r
				\ge \binom{z-2k}{2}\ge k,
			\end{equation*}
			thus $S$ satisfies~\eqref{eq:cut}.
		\item \textbf{Case 2}: $|S\cap (A_{i+z}\setminus A_i)|\le k$
			and $|S\cap (B_{i+z}\setminus B_i)|\ge z-k$: as in Case
			1, we get that $\delta_{F'}(S)\ge k$.
		\item \textbf{Case 3}: $|S\cap (A_{i+z}\setminus A_i)|> k$ and $|S\cap
			(B_{i+z}\setminus B_i)|> k$. By our previous
			statements, we have $A_Y\cup B_Y\subseteq S$. Consequently,
			\begin{align*}
			\delta_{F'}(S)&\ge\delta_{\vec\nabla(X)\cap F'}(S)=
			\delta_{\vec\nabla(X)}(S\cup\overline{U}_{i+z})-
			\delta_{\vec\nabla(X)\cap F[A_i,B_i]}(S)=\\
			&=\sum_{s\in S\cup \overline{U}_{i+z}}
			\left(\delta_{\vec\nabla(X)}(s)-\varrho_{\vec\nabla(X)}(s)\right)
			-\delta_{\vec\nabla(X)\cap F[A_i,B_i]}(S)=\\
			&=-\sum_{s\in
			\overline{U}_{i+z}}e_{\vec\nabla(X)}(U_{i+z},s)+\sum_{s\in
			S\cap ({U}_{i+z}\setminus U_i)}f(s)
			+\sum_{s\in U_i}e_{\vec\nabla(X)}(s,\overline{U}_i)=\\
			&=-\sum_{s\in
			\overline{U}_{i+z}}e_{\vec\nabla(Y)}(U_{i+z},s)+\sum_{s\in
			S\cap {U}_{i+z}}f(s)=\sum_{s\in S}f(s),
			\end{align*}
			which is what we wanted to show.
		\item \textbf{Case 4}: $|S\cap (A_{i+z}\setminus A_i)|< z-k$
			and $|S\cap (B_{i+z}\setminus B_i)|<z-k$: by our
			previous statements, we have $S\cap (A_X\cup
			B_X)=\emptyset$. Since
			$\delta_{F'}(S)=\varrho_{F'}(A'\cup B'\setminus S)$,
			the proof is practically the same as that of Case 3, we
			can use $\vec\nabla(Y)$ to demonstrate
			that~\eqref{eq:cut} is satisfied by $S$.
	\end{itemize}
\end{proof}

\subsection{Constructing the canonical path
\texorpdfstring{$\gamma_{X,Y}$}{𝛾\_\{𝑋,𝑌\}}.}\label{subseq:pathdef}

We will explicitly construct $2(n-3k-3)+1$ intermediate realizations along the
switch sequence $\gamma_{X,Y}$. Let $X$ and $Y$ be the two different
realizations which we intend to connect. The switch sequence includes
$T_{X,Y}[i+1,i+3k+1]$, $T_{X,Y}[i+1,i+3k+2]$, $T_{X,Y}[i+2,i+3k+2]$ for each
$i=1,\ldots,n-3k-3$ in increasing order. These realizations are called
\textbf{milestones}. A roadmap is shown on Figure~\ref{fig:plan}.

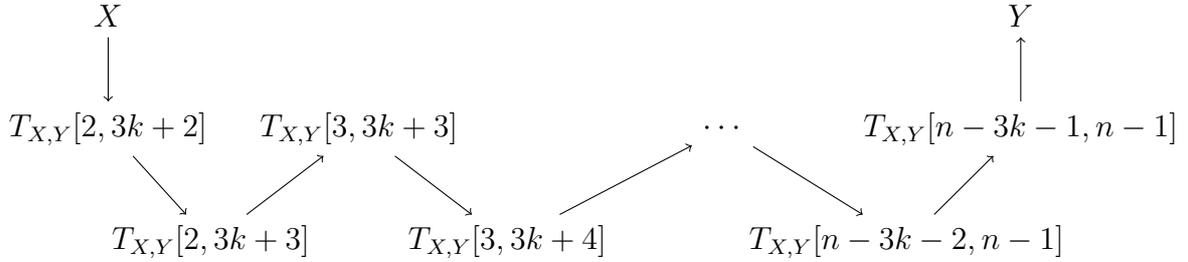
\begin{figure}[H]
\centering
\begin{tikzpicture}[scale=1.5]
\tikzstyle{vertex}=[draw,circle,fill=black,minimum size=3,inner sep=0]

\node (x) at (0,2) {$X$};
\node (x2) at (0,1) {$T_{X,Y}[2,3k+2]$};
\node (x3) at (0.9,0) {$T_{X,Y}[2,3k+3]$};
\node (x4) at (2.2,1) {$T_{X,Y}[3,3k+3]$};
\node (x5) at (3.5,0) {$T_{X,Y}[3,3k+4]$};

\node (dots) at (5.4,1) {$\cdots$};

\node (x6) at (7,0) {$T_{X,Y}[n-3k-2,n-1]$};
\node (x7) at (8,1) {$T_{X,Y}[n-3k-1,n-1]$};
\node (y) at (8,2) {$Y$};

\draw[->] (x) edge (x2) (x2) edge (x3)(x3) edge (x4)(x4) edge (x5)(x5)
	edge (dots)(dots) edge (x6)(x6) edge (x7)(x7) edge (y);

\end{tikzpicture}
\caption{Roadmap of the switch sequence between $X$ and $Y$. The existence of a
short switch sequence between milestones of the sequence is guaranteed by
Lemma~\ref{lem:shortsequence}.}%
\label{fig:plan}
\end{figure}

\begin{lemma}\label{lem:shortsequence}
	There is a switch sequence of length $\cO(k^2)$ that connects
	$T_{X,Y}[i+1,i+z]$ to $T_{X,Y}[i+1,i+z+1]$ and $T_{X,Y}[i+2,i+z]$ to
	$T_{X,Y}[i+1,i+z+1]$.
\end{lemma}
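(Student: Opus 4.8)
The plan is to reduce the claim to a size estimate. I will show that for either pair $(T_1,T_2)$ of realizations named in the statement, the symmetric difference $E(T_1)\triangle E(T_2)$ is spanned by only $\cO(k)$ vertices and carries only $\cO(k^2)$ edges. Given that, the switch sequence comes essentially for free from the standard fact underlying the connectivity of the switch Markov chain: two realizations of a bipartite degree sequence whose symmetric difference has $m$ edges are joined by $\cO(m)$ switches, and such a sequence can be chosen so that every switch involves only vertices spanned by that symmetric difference (decompose the symmetric difference into cycles alternating between $E(T_1)\setminus E(T_2)$ and $E(T_2)\setminus E(T_1)$, and collapse each such cycle of length $2\ell$ with $\ell-1$ bipartite switches acting only on its own vertices).

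For the size estimate, recall the anatomy of the realizations produced by Lemma~\ref{lem:buffers}: in the flow picture,
\[
	\vec\nabla\bigl(T_{X,Y}[j+1,j+w]\bigr)=\vec\nabla(X)[A_j,B_j]\ \uplus\ W\ \uplus\ \vec\nabla(Y)[\overline A_{j+w},\overline B_{j+w}],
\]
where $W$ is an integer flow of value at most $k$ supported on the buffer $\{a_{j+1},\dots,a_{j+w}\}\cup\{b_{j+1},\dots,b_{j+w}\}$ together with the source sets $A_X\subseteq A_j$, $B_X\subseteq B_j$ and sink sets $A_Y\subseteq\overline A_{j+w}$, $B_Y\subseteq\overline B_{j+w}$ from the proof of that lemma. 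Since $F_n$ is acyclic and the vertex index is nondecreasing along each of its directed edges, every unit path in a flow decomposition of the $k$-flow $\vec\nabla(X)$ crosses the cut $(U_j,\overline U_j)$ at most once and only in the forward direction; hence $e_{\vec\nabla(X)}(U_j,\overline U_j)\le k$, so $|A_X|+|B_X|\le k$, and symmetrically $|A_Y|+|B_Y|\le k$. For the same reason $W$ decomposes into at most $k$ monotone paths, each of $\cO(w+k)$ edges, so $|E(W)|=\cO(k(w+k))$. Now compare the two realizations in the lemma: their $\vec\nabla(X)$-blocks are equal or (in the second pair) differ only on edges at $\{a_{i+1},b_{i+1}\}$; their $\vec\nabla(Y)$-blocks differ only on edges at $\{a_{i+z+1},b_{i+z+1}\}$; and their buffer flows are $\le k$-flows on networks of $\cO(z)$ vertices. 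Using that the lemma is invoked with $z=3k+\cO(1)$, these pieces together span $\cO(k)$ vertices, and, since the $\cO(1)$ anchor vertices have degree at most $2k$ in a $k$-flow on $F_n$, contribute $\cO(k^2)$ edges. Thus $|E(T_1)\triangle E(T_2)|=\cO(k^2)$, which finishes the proof together with the standard fact above.

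The real work is in the second paragraph: unwinding the Menger-based construction of Lemma~\ref{lem:buffers} far enough to localize the difference to $\cO(k)$ vertices, for which the acyclicity and index-monotonicity of $F_n$ — keeping the source and sink sets $A_X,B_X,A_Y,B_Y$ small — are the crucial structural inputs. I would also observe that this estimate is insensitive to which admissible buffer flow was selected for each $T_{X,Y}[\cdot,\cdot]$, so the bound holds no matter which fixed realizations one uses when assembling the canonical path system.
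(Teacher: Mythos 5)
Your proposal takes essentially the same route as the paper: bound $|E(T_1)\triangle E(T_2)|$ by $\cO(k^2)$ using the fact that the two realizations agree outside the buffer and its $\cO(k)$ flow-connected neighbours, then apply the swap-distance result (the paper cites Erdős, Király, and Miklós for the bound $|E(T_1)\triangle E(T_2)|/2$, which you re-derive via alternating-cycle decomposition). You supply more of the bookkeeping behind the paper's one-line bound $\tfrac12(z+1+2k)^2$, but the structure of the argument is identical.
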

\begin{proof}
	According to~\cite{erdos_swap-distances_2013}, there is a switch sequence
	of length at most
	\begin{equation*}
		\frac{|E(T_{X,Y}[i+1,i+z])\triangle
		E(T_{X,Y}[i+1,i+z+1])|}{2}\le \frac12{(z+1+2k)}^2\le
		\frac12{(5k+2)}^2,
	\end{equation*}
	between $T_{X,Y}[i+1,i+z]$ and $T_{X,Y}[i+1,i+z+1]$, since they induce
	identical graphs on $U_{i}$ and $\overline{U}_{i+z+1}$, and the at most
	$k-k$ edges entering $U_{i+1}$ and leaving $U_{i+z+1}$ in the flow
	representations are incident on the same set of vertices in the two
	flows.
\end{proof}
Note that in Lemma~\ref{lem:buffers}, $X$ satisfies the role of
$T_{X,Y}[1,3k+2]$ and $Y$ satisfies the role of $T_{X,Y}[n-3k-1,n]$. By
applying Lemma~\ref{lem:shortsequence}, the arrows in Figure~\ref{fig:plan} can
be substituted with switch sequences of constant length. Concatenating these
short switch sequences and pruning the circuits from the resulting trail (so that
any realization is visited at most once by the canonical path) produces the
switch sequence $\gamma_{X,Y}$ connecting $X$ to $Y$ in the Markov graph.

\subsection{Assigning the encodings.}

Each realization visited by $\gamma_{X,Y}$ receives an encoding that will
be an ordered 4-tuple consisting of another realization, two graphs of
constant size, and an integer in $\{1, \ldots, n\}$.

The closed neighborhood of a subset of vertices $U\subseteq V(G)$ in a
graph $G$ is denoted by $N_G[U]\supseteq U$. For the two graphs of
constant size, we need the following definition.

\begin{definition}[left-compressed induced subgraph]
	Let $X$ be a realization and let $R\subset A\cup B$. Let us group the
	vertices $A\cup B$ into pairs: ${\{(a_i, b_{i})\}}_{i=1}^{n}$. Remove
	the pairs that do not intersect $R$, and let the remaining pairs be
	${\{(a_{i_j}, b_{i_j})\}}_{j=1}^{r}$ for some $i_1<\cdots<i_r$. For
	each edge of $E(X[R])$, map $a_{i_j}\mapsto a_j$ and $b_{i_j}\mapsto
	b_{j}$ for all $j$ simultaneously. This changes the embedding of the
	vertices of $X[R]$, and we call this new graph the left-compressed copy
	of $X[R]$.
\end{definition}

To any realization on the canonical path $\gamma_{X,Y}$ we will assign an
encoding
\begin{equation*}
	L_i(X,Y):=\Big(T_{Y,X}[i+1,i+3k+1],G_X[i],G_Y[i],i\Big)
\end{equation*}
for some $0\le i\le n-3k-1$, where $G_X[i]$ is the left-compressed subgraph of
$X$ induced by $N_{\vec\nabla(X)}[U_{i+3k+1}\setminus U_{i}]$ and $G_Y[i]$ is
the left-compressed copy of subgraph of $Y$ induced
$N_{\vec\nabla(Y)}[U_{i+3k+1}\setminus U_i]$. An encoding is assigned to each
realization along the switch sequence $\gamma_{X,Y}$ as follows:
\begin{itemize}
	\item The encoding $L_0(X,Y)$ (where $T_{Y,X}[1,3k+1]:=Y$) is used from
		the beginning $X$ of the switch sequence until it arrives at
		$T_{X,Y}[2,3k+3]$ (including this realization).

	\item For $1\le i \le n-3k-1$, the encoding $L_i$ is used between
		$T_{X,Y}[i+1,i+3k+2]$ (not included) and $T_{X,Y}[i+2,i+3k+2]$
		(included).

	\item The encoding $L_{n-3k-1}$ (where $T_{Y,X}[n-3k,n]:=X$ is chosen)
		is used from $T_{X,Y}[n-3k-2,n-1]$ (not included) to $Y$.
\end{itemize}

\subsection{Estimating the load
\texorpdfstring{$\rho(\Gamma)$}{𝜌{(𝛤)}}}\label{sec:load}

The total number of possible encodings is at most
\[
	\cO_k(|\cG(\bm{d})|\cdot n)
\]
(where the index $k$ warns that this expression may depend on $k$), since the
number of left-compressed graphs on at most $5k+2$ vertices is a constant
depending only on $k$.

\begin{lemma}[Reconstructability]
	Given $\bm{d}$, there is an algorithm that takes $Z_i\in\gamma_{X,Y}$
	and $L_i(X,Y)$ as an input and outputs the realizations $X$ and $Y$ (for any $i$).
\end{lemma}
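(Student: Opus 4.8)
The plan is to reverse-engineer the construction. Given $Z_i$ along $\gamma_{X,Y}$ together with its encoding $L_i(X,Y) = \big(T_{Y,X}[i+1,i+3k+1], G_X[i], G_Y[i], i\big)$, the integer $i$ immediately tells us where the buffer sits, and from the position-tag one can determine which of the (at most $\mathcal{O}(k^2)$) milestones-plus-interpolating-realizations the realization $Z_i$ is. Because the left/right sides of $Z_i$ agree with $Y$ and $X$ respectively outside the buffer, we already know $Y$ on $U_i$ (from $Z_i$) and $X$ on $\overline{U}_{i+3k+2}$ (from $Z_i$). The encoding's first coordinate $T_{Y,X}[i+1,i+3k+1]$, by the symmetric role it plays (Lemma~\ref{lem:buffers} with $X$ and $Y$ swapped), agrees with $X$ on $U_i$ and with $Y$ on $\overline{U}_{i+3k+1}$. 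So from $Z_i$ and $T_{Y,X}[i+1,i+3k+1]$ jointly we recover $X$ everywhere except on the $3k+1$ pairs $U_{i+3k+1}\setminus U_i$ inside the buffer, and symmetrically we recover $Y$ everywhere except on those same pairs (up to the one-pair discrepancy between $3k+1$ and $3k+2$, which is absorbed by the constant-size data).

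The remaining, genuinely buffer-local information is supplied by $G_X[i]$ and $G_Y[i]$, the left-compressed induced subgraphs of $X$ and $Y$ on $N_{\vec\nabla(X)}[U_{i+3k+1}\setminus U_i]$ and $N_{\vec\nabla(Y)}[U_{i+3k+1}\setminus U_i]$. First I would argue that these closed neighborhoods have size $\mathcal{O}(k)$: a vertex in $U_{i+3k+1}\setminus U_i$ can only be adjacent in $\vec\nabla(X)$ to vertices reachable across at most $k$ units of flow, so the flow-neighborhood stays within a band of width $\mathcal{O}(k)$ of the buffer (this is exactly the kind of bound already used in the proof of Lemma~\ref{lem:shortsequence}). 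Hence $G_X[i]$ and $G_Y[i]$ are honestly constant-size objects, and the left-compression is an injective relabeling that we can invert once we know $i$ and which pairs are retained — the retained pairs are precisely those meeting the flow-neighborhood, and that set is itself reconstructible from the position and from the edges of $Z_i$ that cross into the buffer.

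So the algorithm is: (1) read off $i$ and the milestone type from $L_i$; (2) use $Z_i$ and $T_{Y,X}[i+1,i+3k+1]$ to reconstruct $X$ outside the buffer band and $Y$ outside the buffer band; (3) decompress $G_X[i]$ and $G_Y[i]$ using the known retained pairs to fill in $X$ and $Y$ inside the buffer band; (4) output $X$ and $Y$. The main obstacle I expect is step (3): one has to check that $N_{\vec\nabla(X)}[U_{i+3k+1}\setminus U_i]$ is not only $\mathcal{O}(k)$ in size but that the \emph{set of pairs it meets} is determinable without already knowing $X$ — this needs the observation that $\vec\nabla(X)$ restricted to $U_i$ and to $\overline{U}_{i+3k+2}$ is visible in $Z_i$, so the only ambiguity is which vertices just outside the buffer receive flow from inside it, and those are flagged by the $\le k$ edges of $\vec\nabla(Z_i)$ crossing the buffer boundary, which $Z_i$ exposes directly. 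Once that bookkeeping is pinned down, decompression and output are routine; combined with the encoding count $\mathcal{O}_k(|\cG(\bm d)|\cdot n)$ from the previous subsection, reconstructability yields $\rho(\Gamma)=\mathcal{O}_k(\mathrm{poly}(n))$ and hence rapid mixing via Lemma~\ref{lem:sinclair}.
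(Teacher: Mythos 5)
Your reconstruction follows the paper's argument exactly: read off $i$, use $T_{Y,X}[i+1,i+3k+1]$ (which agrees with $X$ on $U_i$) together with $Z_i$ (which agrees with $X$ outside the buffer) to pin down $X$ away from the buffer band, and then decompress $G_X[i]$ to fill in $X$ on $N_{\vec\nabla(X)}[U_{i+3k+1}\setminus U_i]$, with the symmetric argument for $Y$. The only difference is that you explicitly flag and argue the step of identifying which pairs were retained by the left-compression, a point the paper leaves implicit under ``left-compression preserves the order of the indices.''
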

\begin{proof}
	The first coordinate of $L_i$ is an realization, of the form
	$T_{Y,X}[i+1,i+3k+1]$ for an unknown $X,Y$. The index $i$ is known,
	because it is the last component of $L_i$. W.l.o.g.\ we show how to
	recover $X$. From $T_{Y,X}[i+1,i+3k+1]$ and $i$, we know the induced
	subgraph of $X$ on the vertices $U_i$. Similarly, the induced subgraph
	of $Z_i$ on the vertices $\overline{U}_{i+3k+1}$ is identical to the
	induced subgraph of $X$ on the same vertices. Hence the only unknown
	part of $X$ is its induced subgraph on $N_X[U_{i+3k+1}\setminus U_i]$.
	The subgraph in the second component of $L_i(X,Y)$ is the
	left-compressed copy of $X[N_X[U_{i+3k+1}\setminus U_{i}]]$. Since
	left-compression preserves the order of the indices of $a_j\in A$ and
	$b_j\in B$, $X$ can be fully recovered.
\end{proof}

\begin{proof}[Proof of Theorem~\ref{thm:main}]
	We have shown that $\rho(\Gamma)=\cO(n\cdot n^4)$ and
	$\ell(\Gamma)=\cO(n)$, thus $\tau(\varepsilon)\le
	\cO(n^{8}\log\varepsilon^{-1})$, verifying that the switch Markov
	chain is rapidly mixing on $\bS_{2k}(\cH_0)$ and $B_{2k}(\cH_0)$.
\end{proof}

\section{Proof of Theorem~\ref{thm:nonstable}: non-stability of
\texorpdfstring{$\cH_k$}{𝓗ₖ}}\label{sec:nonstable}

In this section we show that it is relatively straightforward to get the
asymptotic growth rate of the number of realizations of $\bm{h}_k(n)$ when $k$
is a constant and $n$ tends to infinity. We first illustrate this for $k=1$.
Recall Corollary~\ref{cor:equivalence} and that
$\bm{h}_1(n)=\bm{h}_0(n)-\mathds{1}_{a_1}-\mathds{1}_{b_1}$.

\begin{lemma}\label{lem:counting1}
	The number of all directed paths (integer 1-flows) from $a_1$ to $b_n$ in
	$F_n$ is
	\[
		\left[\begin{array}{c}
		1 \\
		1
		\end{array}\right]^T
		\left[ \begin{array}{cc}
		2 & 1 \\
		1 & 1
		\end{array}\right]^{n-1}
		\left[ \begin{array}{c}
		0 \\
		1
		\end{array} \right].
	\]
\end{lemma}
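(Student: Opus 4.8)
The plan is to recast the quantity as a count of directed $a_1$--$b_n$ paths, set up a second-order linear recursion for that count, and match the recursion together with two base cases to the matrix product on the right. First note that $F_n$ is acyclic: a directed path $a_1\to b_{j_1}\to a_{i_1}\to\cdots$ forces the index chain $1\le j_1<i_1\le j_2<i_2\le\cdots$, so no vertex is repeated and $F_n$ has no directed cycle. Since $a_1$ is a pure source, $b_n$ a pure sink, and every arc has capacity $1$, an integer $1$-flow from $a_1$ to $b_n$ is exactly a directed $a_1$--$b_n$ path, so it suffices to count these.

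For $m\le n$ let $\beta_m$ be the number of directed paths from $a_1$ to $b_m$ in $F_n$, and $\alpha_m$ the number from $a_1$ to $a_m$ (with the convention $\alpha_1=1$ for the empty path); by monotonicity of the index chain these numbers do not depend on $n$. Classifying a path by its last arc gives, for $m\ge2$,
\[
	\alpha_m=\sum_{j=1}^{m-1}\beta_j,\qquad \beta_m=1+\sum_{i=2}^{m}\alpha_i ,
\]
the $1$ accounting for the single arc $a_1\to b_m$ (and $\beta_1=1$). Substituting the first relation into the second yields $\beta_m=1+\sum_{i=1}^{m-1}\sum_{j=1}^{i}\beta_j$ for all $m\ge1$; differencing twice gives the homogeneous recursion
\[
	\beta_m-2\beta_{m-1}+\beta_{m-2}=\beta_{m-1},\qquad\text{i.e.}\qquad \beta_m=3\beta_{m-1}-\beta_{m-2}\quad(m\ge3),
\]
with initial values $\beta_1=1$ and $\beta_2=1+\alpha_2=2$.

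Now $M=\begin{pmatrix}2&1\\1&1\end{pmatrix}$ has characteristic polynomial $\lambda^2-3\lambda+1$, so $M^{k+1}=3M^{k}-M^{k-1}$ by Cayley--Hamilton; hence the right-hand side $f(n):=(1,1)\,M^{\,n-1}\,(0,1)^{T}$ obeys $f(n)=3f(n-1)-f(n-2)$ for $n\ge3$, while a direct evaluation gives $f(1)=1$ and $f(2)=2$. Since $\beta_n$ and $f(n)$ satisfy the same second-order recursion with the same two initial values, $\beta_n=f(n)$ for every $n\ge1$, which is the assertion. (The structure behind $M$: scanning the columns $\{a_i,b_i\}$ from left to right, record whether the unique arc of the path crossing each gap runs from an $a$-vertex to a $b$-vertex or the reverse; the entry $2$ encodes that a gap crossed twice by $b\to a$ arcs may or may not also carry the internal arc $a_i\to b_i$. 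One can run the entire argument in this transfer-matrix language, folding the two extremal columns --- which every path must visit --- into the boundary vectors $(1,1)$ and $(0,1)$.) The computation is essentially routine; the only point requiring care is aligning the boundary conventions --- the two base cases and the exponent $n-1$ --- so that the recursion reproduces the stated matrix product on the nose rather than a conjugate of it.
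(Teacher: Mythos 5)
Your proof is correct and rests on the same underlying transfer-matrix idea as the paper's: scan the vertices $a_1,b_1,a_2,b_2,\dots$ from left to right and track, across the cut, how many partial paths end in each color class. The bookkeeping is slightly different: the paper works with the cumulative counts $S_1(\ell)=\sum_{j\le\ell}\beta_j$ and $S_2(\ell)=\sum_{i\le\ell}\alpha_i$, which satisfy the first-order vector recursion $(S_1(\ell+1),S_2(\ell+1))^T=M(S_1(\ell),S_2(\ell))^T$ with $(S_1(1),S_2(1))=(1,1)$, and then reads off $S_2(n)=\beta_n$ (by peeling off the final arc into $b_n$) directly as $(0,1)\,M^{\,n-1}(1,1)^T$, which equals the stated expression by symmetry of $M$. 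You instead derive the scalar second-order recursion $\beta_m=3\beta_{m-1}-\beta_{m-2}$ and match it to $f(n)=(1,1)\,M^{\,n-1}(0,1)^T$ via Cayley--Hamilton plus two base cases. Both are valid; the paper's version produces the matrix-power formula in one step from the vector recursion, while yours is marginally longer but makes the $3,-1$ Fibonacci-like recursion explicit, which is arguably more transparent for the eigenvalue computation in Corollary~\ref{cor:orderofmagnitude}.
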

\begin{proof}
	Let $S_{1}(\ell)$ be the number of paths in $F_n$ that start at $a_1$
	and end in $B_\ell$. Similarly, let $S_{2}(\ell)$ be the number of
	paths in $F_n$ that start at $a_1$ and end in one of the vertices in
	$A_\ell$. We have
	\begin{equation}\label{eq:recursion}
		\begin{array}{rcl}
			S_{1}(\ell+1)&=&2S_{1}(\ell)+S_{2}(\ell),\\
			S_{2}(\ell+1)&=&S_{1}(\ell)+S_{2}(\ell).
		\end{array}
	\end{equation}
	Observe that $a_1\to b_n$ paths in $F_n$ are in bijection with paths
	starting at $a_1$ and ending in $A_n$: the corresponding paths are
	obtained by deleting the last edge incident to $b_n$. Since
	$S_{1}(1)=S_{2}(1)=1$, from~\eqref{eq:recursion} we get that $S_2(n)$
	is the quantity in the statement of the Lemma and the proof is
	complete.
\end{proof}

\begin{corollary}\label{cor:orderofmagnitude}
	The number of realizations of $\bm{h}_1(n)$ is $\Theta \left( {\left(
	\frac{3+\sqrt{5}}{2}\right)}^n \right)$
\end{corollary}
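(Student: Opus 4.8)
The plan is to extract the asymptotics directly from the closed form provided by Lemma~\ref{lem:counting1}, since by Corollary~\ref{cor:equivalence} the realizations of $\bm{h}_1(n)=\bm{h}_0(n)-\mathds{1}_{a_1}-\mathds{1}_{b_n}$ are in bijection with integer $1$-flows (i.e., directed paths) from $a_1$ to $b_n$ in $F_n$, and Lemma~\ref{lem:counting1} counts exactly these by the matrix power $\left[\begin{smallmatrix}1\\1\end{smallmatrix}\right]^T M^{n-1}\left[\begin{smallmatrix}0\\1\end{smallmatrix}\right]$ with $M=\left[\begin{smallmatrix}2&1\\1&1\end{smallmatrix}\right]$. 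The standard fact is that such a bilinear form in $M^{n-1}$ grows like $\Theta(\lambda_{\max}^n)$, where $\lambda_{\max}$ is the dominant eigenvalue of $M$, provided the fixed left and right vectors have nonzero components along the corresponding eigenvector.

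First I would compute the eigenvalues of $M$: the characteristic polynomial is $\lambda^2-3\lambda+1=0$, so $\lambda_{\pm}=\frac{3\pm\sqrt{5}}{2}$, and $\lambda_+=\frac{3+\sqrt5}{2}>1>\lambda_-=\frac{3-\sqrt5}{2}>0$. Next I would diagonalize: write $M=P\,\mathrm{diag}(\lambda_+,\lambda_-)\,P^{-1}$, so that $M^{n-1}=P\,\mathrm{diag}(\lambda_+^{n-1},\lambda_-^{n-1})\,P^{-1}$. Substituting into the formula of Lemma~\ref{lem:counting1} gives $S_2(n)=c_+\lambda_+^{n-1}+c_-\lambda_-^{n-1}$ for constants $c_\pm$ determined by the projections of $\left[\begin{smallmatrix}1\\1\end{smallmatrix}\right]$ and $\left[\begin{smallmatrix}0\\1\end{smallmatrix}\right]$ onto the eigenlines. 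Since $|\lambda_-|<1$, the term $c_-\lambda_-^{n-1}\to 0$, so $S_2(n)=c_+\lambda_+^{n-1}+o(1)$, and the only remaining point is to check $c_+\neq 0$.

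To see $c_+\neq 0$ without heavy computation, I would observe that $M$ has strictly positive entries, so by Perron--Frobenius the eigenvector for $\lambda_+$ can be taken with strictly positive entries; both fixed vectors $\left[\begin{smallmatrix}1\\1\end{smallmatrix}\right]$ and $\left[\begin{smallmatrix}0\\1\end{smallmatrix}\right]$ are nonnegative and nonzero, hence have strictly positive inner product with this positive eigenvector (and with the corresponding positive left eigenvector), forcing $c_+>0$. Alternatively, one could note $S_2(n)$ is a strictly increasing sequence of positive integers, so it cannot decay, ruling out $c_+=0$. Either way, $S_2(n)=\Theta(\lambda_+^{n})$, which is exactly $\Theta\big(\big(\tfrac{3+\sqrt5}{2}\big)^n\big)$ as claimed. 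The main (very minor) obstacle is simply the bookkeeping of verifying the leading coefficient is nonzero; the Perron--Frobenius/positivity argument sidesteps any messy constant-chasing, so the proof should be short.
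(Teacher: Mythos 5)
Your proof is correct and is essentially an expanded version of the paper's own one-line argument: the paper simply asserts that neither $[1,1]$ nor $[0,1]^T$ is perpendicular to the dominant eigenvector of $\bigl[\begin{smallmatrix}2&1\\1&1\end{smallmatrix}\bigr]$, which is exactly the nonvanishing of your leading coefficient $c_+$. Your Perron--Frobenius positivity observation is a clean way to justify that assertion without computing the eigenvector explicitly.
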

\begin{proof}
	Neither $[1,1]$ nor ${[0,1]}^T$ is perpendicular to the eigenvector that
	belongs to the largest eigenvalue $\frac{3+\sqrt{5}}{2}$ of the matrix
	\[
		\left[ \begin{array}{cc}
		2 & 1 \\
		1 & 1
		\end{array}\right].
	\]
\end{proof}

The proof of Lemma~\ref{lem:counting1} can be interpreted as follows. We count
$a_1\to b_n$ paths by looking at their induced subgraphs on the vertices in
$U_{\ell}$ (the number of these is precisely $S_1(\ell)+S_{2}(\ell)$). The main
observation is that the number of ways an $a_1\to
U_{\ell}$ path can be extended to an $a_1\to U_{\ell+1}$ path only depends on
whether the path's endpoint lies in $A_\ell$ or in $B_\ell$.

Again, according to Corollary~\ref{cor:equivalence}, realizations of
$\bm{h}_k(n)$ are in a 1-to-1 correspondence with integer $k$-flows from $a_1$
to $b_n$. We shall mimic the argument of Lemma~\ref{lem:counting1} with
$k$-flows. The recursion will consider the beginning of a $k$-flow on $U_\ell$
and its ``termination-type''.
\begin{definition}[set of types]
	Let $\cP_k$ be the set of partitions of $k$ (the set of
	multisets of positive integers whose sum of elements is exactly $k$)
	and $\cP_0:= \{\emptyset\}$. For all positive integers $k$, we
	define \emph{the set of types}:
	\begin{equation*}
		\cT_k := \{(R,Q) \, | \, \, \exists \, 0\leq m \leq k :
		R \in \cP_{m} , Q \in \cP_{k-m} \}.
	\end{equation*}
\end{definition}

\begin{definition}[type of a flow]
	Let $X$ be $k$-flow in $F_n[U_\ell]$ from a single source $a_1$, and the
	sinks are arbitrarily distributed in $U_\ell$. We say that \emph{the
	type of $X$} is $T=(R,Q)\in \cT_k$ if there is an injective function $f
	: R \rightarrow A_\ell$ such that for every $a_i\in f(R)$ we have
	\begin{equation*}
		\varrho_{X}(a_i)-\delta_{X}(a_i)=f^{-1}(a_i),
	\end{equation*}
	and for all $a_i\in A_\ell\setminus f(R)$ we have
	$\varrho_{X}(a_i)=\delta_{X}(a_i)$. Similarly, there is an injective
	function $g: Q \rightarrow B_\ell$ such that
	for every $b_i\in g(Q)$
	\begin{equation*}
		\varrho_{X}(b_i)-\delta_{X}(b_i)=g^{-1}(b_i),
	\end{equation*}
	and for all $b_i\in B_\ell\setminus g(Q)$ we
	have $\varrho_{X}(b_i)=\delta_{X}(b_i)$.
\end{definition}

Informally, the type of $X$ describes the multiplicities of
the incidences of the endpoints of the $k$-flow on $U_\ell$.

In the proof of Lemma~\ref{lem:counting1}, the functions $S_1(\ell), S_2(\ell)$
were actually the number of $1$-flows on $U_{\ell}$ of type $(\emptyset,\{1\})$
and $(\{1\},\emptyset)$, respectively. The next definition is the analogue of
the matrix in the proof of Corollary~\ref{cor:orderofmagnitude} for large $k$.

\begin{definition}[type matrix]
	For all $k$, let us fix an ordering of the types: $\cT_k=(T_1,
	\ldots, T_{|\cT_k|})$. Let $\ell$ and $n$ be so large, that
	there exists a $k$-flow which has type $T_i$ on $U_{\ell}$. We define
	$p_{i,j}$ to be the number possible ways a $k$-flow on $U_{\ell}$ from
	the single source $a_1$ can be extended to a $k$-flow of type $T_j$ on
	$U_{\ell+1}$. We define the type-matrix $\cP_k$ to be the
	$|\cT_k| \times |\cT_k|$ matrix whose element in the
	$i$-th row and $j$-th column is $p_{i,j}$.
\end{definition}

It is not hard to see that $p_{i,j}$ is well-defined, in other words, $p_{i,j}$
does not depend on either $\ell$, $n$, or the $k$-flow.

In the proof of Corollary~\ref{cor:orderofmagnitude}, the type matrix
\begin{equation*}
	\cP_1= \left[ \begin{array}{cc}
	2 & 1 \\
	1 & 1
	\end{array}\right]
\end{equation*}
corresponds to the ordering $\cT_1= \big((\emptyset,\{1\}) \, , \,
(\{1\},\emptyset)\big)$. Now we are ready to prove the analogue of
Lemma~\ref{lem:counting1} for $k$-flows where $k>1$.

\begin{lemma}
	For every $k\ge 1$, the number of $k$-flows on $F_n$ from the single
	source $a_1$ to the single sink $b_n$ is
	\begin{equation*}
		v^T \cP_k^{n-1}w
	\end{equation*}
	where:
	\begin{itemize}
		\item $v$ is the vector of length $|\cT_k|$ which
			contains $1$ at the coordinates which correspond to the
			types $(\{k-1\},\{1\})$,
			$(\{k\},\emptyset)\in\cT_k$, and zero
			everywhere else,

		\item $\cP_k$ is the type-matrix,

		\item $w$ is the vector of length $|\cT_k|$ that
			contains $1$ at the coordinate that corresponds to the
			type
			\begin{equation*}
				(\stackrel{k}{\overbrace{\{1,1,\ldots,
				1\}}},\emptyset)\in \cT_k
			\end{equation*}
			and zero everywhere else.
	\end{itemize}
\end{lemma}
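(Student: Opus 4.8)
The plan is to lift the transfer-matrix computation of Lemma~\ref{lem:counting1} by one dimension, indexing states by the \emph{type} of a flow. For $1\le\ell\le n$ let $N_\ell$ be the row vector of length $|\cT_k|$ whose $i$-th entry counts the $k$-flows in $F_n[U_\ell]$ from the single source $a_1$ --- with sinks allowed anywhere in $U_\ell$, the source being permitted to retain any of the $k$ units not yet routed --- that have type $T_i$. I want three facts: $N_1=v^T$; $N_{\ell+1}=N_\ell\cP_k$ for $1\le\ell<n$; and the number of $k$-flows from $a_1$ to $b_n$ in $F_n$ equals the entry of $N_n$ indexed by $(\{1,\ldots,1\},\emptyset)$ (with $k$ ones), i.e.\ $N_n w$. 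Iterating the recursion from the base case gives $N_n=v^T\cP_k^{n-1}$, and the lemma follows.

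The base case is immediate: in $F_n[U_1]$ the only edge is $a_1\to b_1$ (capacity one), so a $k$-flow from $a_1$ either leaves that edge unused --- all $k$ units rest at $a_1$, type $(\{k\},\emptyset)$ --- or uses it --- one unit sits at $b_1$ and $k-1$ rest at $a_1$, type $(\{k-1\},\{1\})$, read as $(\emptyset,\{1\})$ when $k=1$. Each of these two types occurs exactly once and no other type occurs, so $N_1$ is the indicator vector $v^T$. For the endpoint identity I use that $F_n$ is acyclic and that no edge of $F_n$ runs from $\overline{U}_\ell$ into $U_\ell$ (since $a_ib_j\in\vec E(F)$ forces $i\le j$ and $b_ja_i\in\vec E(F)$ forces $j<i$); hence every $a_1$-rooted path in $F_n[U_\ell]$ stays in $U_\ell$. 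In a $k$-flow from $a_1$ to $b_n$ all $k$ units reach $b_n$ along $k$ distinct edges $a_{j_1}b_n,\ldots,a_{j_k}b_n$; deleting them yields a $k$-flow from $a_1$ in $F_n[U_n]$ in which each of $a_{j_1},\ldots,a_{j_k}$ absorbs exactly one unit and no $B$-vertex absorbs any --- precisely type $(\{1,\ldots,1\},\emptyset)$ --- while conversely any flow of that type uses none of the edges $a_jb_n$, so re-attaching the $k$ edges into $b_n$ from its $A$-sinks inverts the operation.

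The crux is the recursion. Any $k$-flow $X'$ in $F_n[U_{\ell+1}]$ restricts to $X:=X'\cap E(F_n[U_\ell])$; the internal edges of $F_n[U_{\ell+1}]$ not already in $F_n[U_\ell]$ are exactly $a_ib_{\ell+1}$ for $1\le i\le\ell+1$ and $b_ja_{\ell+1}$ for $1\le j\le\ell$, and both $a_{\ell+1}$ and $b_{\ell+1}$ have no out-edge inside $U_{\ell+1}$ except $a_{\ell+1}b_{\ell+1}$, so deleting the edges at the two new vertices merely truncates each $a_1$-path at its first new vertex; hence $X$ is again a valid $k$-flow from $a_1$ in $F_n[U_\ell]$, of some type $T_i$. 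It remains to verify the well-definedness claim following the definition of $\cP_k$: for a fixed $X$ of type $T_i$, the number of $X'$ restricting to $X$ and having type $T_j$ is a quantity $p_{i,j}$ independent of $X$ (and of $\ell,n$). Indeed, an extension is obtained by choosing a set of $A$-sinks of $X$ (the source included) from which to push one unit each to $b_{\ell+1}$, a set of $B$-sinks from which to push one unit each to $a_{\ell+1}$, and whether to forward one of the units arriving at $a_{\ell+1}$ on to $b_{\ell+1}$; since flows are unordered, the number of such choices producing any prescribed new pair of partitions depends on $X$ only through the multiset of its $A$-sink values and the multiset of its $B$-sink values, i.e.\ only through $T_i$. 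This gives $(N_{\ell+1})_j=\sum_i (N_\ell)_i\,p_{i,j}$, that is $N_{\ell+1}=N_\ell\cP_k$.

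I expect this last verification to be the main obstacle. One must set up the bookkeeping so that the not-yet-routed units are consistently ``parked'' at the source --- without which the base case would be empty for $k\ge 2$ --- and then check that passing from $U_\ell$ to $U_{\ell+1}$ transforms the type by an operation (decrementing some parts of $R$ or of $Q$ and creating new parts at $a_{\ell+1}$ and $b_{\ell+1}$) whose number of realizations is a function of $R$ and $Q$ as multisets alone. Both points are true because $F_n$ is a layered acyclic network whose layer-to-layer connection pattern is the same at every level, but making the second precise is where the real work lies; the acyclicity/one-way-cut observation, the two-line base case, and the delete-the-last-layer bijection are routine.
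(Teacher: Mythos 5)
Your argument follows the same transfer-matrix scheme as the paper's own proof --- track the type vector $N_\ell$ across $U_1,\ldots,U_n$, observe $N_1=v^T$, iterate $N_{\ell+1}=N_\ell\cP_k$, and finish with the delete-the-edges-into-$b_n$ bijection --- and it is correct; indeed your pairs $(\{k\},\emptyset)$, $(\{k-1\},\{1\})$, $(\{1,\ldots,1\},\emptyset)$ match the lemma's statement, whereas the paper's short proof writes the same types with the two coordinates swapped. The detail you flag as the ``main obstacle'' --- that $p_{i,j}$ depends on $X$ only through the multisets $R,Q$ of excesses (with unrouted units parked at $a_1$ counted in $R$) --- is precisely the ``not hard to see'' claim the paper absorbs into the definition of the type matrix, and the argument you give already discharges it, so there is no gap.
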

\begin{proof}
	With the appropriate substitutions, the proof is identical to the proof
	of Lemma~\ref{lem:counting1}. The type of a $k$-flow on $U_1$ emanating
	from $a_1$ is either $(\{1\},\{k-1\})$ or $(\emptyset,\{k\})$. By the
	definition of $\cP_k$, the vector $v^T\cP^{n-1}$ contains the number of
	graphs on the vertices $U_n$ with a given type.  Of these, the
	$k$-flows from $a_1\to b_n$ correspond to graphs with type
	$(\emptyset,\{1,1,\ldots, 1\})$ (deleting $b_n$ and the incident edges
	results in a $k$-flow of this type). Hence the statement of the lemma
	follows.
\end{proof}

The following simple property of the type matrix will be used.

\begin{definition}
	A matrix $\cP$ is primitive, if $\exists m$ for which every
	entry of $\cP^{m}$ is positive.
\end{definition}

\begin{lemma}\label{lem:primitive}
	The type matrix $\cP_k$ is primitive for any $k$.
\end{lemma}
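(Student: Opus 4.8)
The plan is to show that $\cP_k$ is primitive by exhibiting, for every ordered pair of types $(T_i, T_j)\in\cT_k\times\cT_k$ and for a uniform $m$, at least one $k$-flow on $U_\ell$ of type $T_i$ that can be extended across $m$ new vertex-pairs $U_{\ell+1},\ldots,U_{\ell+m}$ to a $k$-flow of type $T_j$ on $U_{\ell+m}$. Since $\cP_k^m$ counts exactly these multi-step extensions, a positive count for every $(i,j)$ gives positivity of every entry of $\cP_k^m$. The key observation driving the construction is the structure of $F_n$: from any vertex in $U_\ell$ there are edges into both $A_{\ell+1}\setminus A_\ell=\{a_{\ell+1}\}$ and $B_{\ell+1}\setminus B_\ell=\{b_{\ell+1}\}$ (indeed $a_i b_{\ell+1}\in\vec E(F)$ for every $i\le \ell+1$, and $b_j a_{\ell+1}\in\vec E(F)$ for every $j\le\ell$), so in one step a flow can be "parked" entirely on the newly-added pair, and conversely can be redistributed from there onto any prescribed set of earlier vertices.

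**First I would** establish a normalization step: show that any type $T_i=(R,Q)$ on $U_\ell$ can be routed in a bounded number $m_1$ of steps (depending only on $k$, not on $\ell$ or $n$) to the single "concentrated" type in which all $k$ units of excess are accumulated at the last $a$-vertex — concretely a flow whose only sink-deficit on $U_{\ell+m_1}$ is $\varrho-\delta = k$ at $a_{\ell+m_1}$, i.e.\ type $(\{k\},\emptyset)$. To do this I push all the outgoing flow from the (at most $2k$) vertices carrying excess forward along edges of $F$ into the freshly added vertices, consolidating step by step; because each added $a_{\ell+t}$ receives edges from all previously added $b$'s and each added $b_{\ell+t}$ receives edges from all earlier $a$'s, there is always room to funnel everything to one vertex. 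Symmetrically (by the reverse argument / reading the flow backwards, or by an explicit forward construction), show that from the concentrated type $(\{k\},\emptyset)$ one can reach any target type $T_j=(R',Q')$ on $U_{\ell+m_1+m_2}$ within a bounded number $m_2$ of steps: split the $k$ parked units and route the appropriate multiplicities onto vertices realizing the parts of $R'$ and $Q'$. Composing, any type reaches any type in at most $m:=m_1+m_2$ steps, and $m$ can be chosen uniformly (e.g.\ $m$ a fixed function of $k$, say $O(k^2)$ crudely) by padding shorter routes with "do nothing new" extensions — note one may always extend a $k$-flow from $U_{\ell+t}$ to $U_{\ell+t+1}$ without changing its type by simply sending nothing into the new pair, provided the flow does not currently need to occupy those vertices; if staying put is not directly available, a one-step shuffle-and-return trick using the new pair as scratch space accomplishes a no-op in two steps.

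**The hard part will be** the uniformity of $m$: I must guarantee that the number of intermediate vertex-pairs used in the normalization and de-normalization does not depend on $\ell$ or $n$, only on $k$, so that a single power $m$ works for all valid starting indices. This is where the bounded number of active vertices ($\le 2k$ nonzero-excess vertices at any moment, since the total circulating commodity is $k$) is essential: each consolidation or splitting step handles one active vertex, so a count of $O(k)$ to $O(k^2)$ steps suffices regardless of how large $\ell,n$ are. A second, more technical point is making sure that the "park on the new pair" and "no-op" moves are genuinely realizable flows in $F_n$ with the correct source at $a_1$ and integrality — but this follows directly from Corollary~\ref{cor:equivalence} and Lemma~\ref{lem:equivalence} once we check the cut/conservation conditions, which are immediate here because we are only rerouting an existing feasible flow along available unit-capacity edges. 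Finally I would remark that primitivity of $\cP_k$ is exactly what powers the Perron--Frobenius argument (as in Corollary~\ref{cor:orderofmagnitude}) showing $|\cG(\bm{h}_k(n))|$ grows like $\lambda_k^n$ for the dominant eigenvalue $\lambda_k>1$ of $\cP_k$, which in turn yields Theorem~\ref{thm:nonstable}.
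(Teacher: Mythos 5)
Your plan is essentially the paper's proof: route every ordered pair of types through a single distinguished hub, showing that any type extends to the hub (and the hub to any type) using a number of newly added vertex-pairs that is bounded in $k$ alone, then take a uniform power $m$ by padding shorter routes with no-op extensions. The only substantive difference is the hub: you normalize to the concentrated type $(\{k\},\emptyset)$, while the paper uses the spread-out type $(\{1,1,\ldots,1\},\emptyset)$ and asserts that $k$ steps suffice in both directions, so that $\cP_k^{2k}>0$. In fact your looser formulation with unspecified bounded constants $m_1,m_2$ is the safer one: getting from the spread-out type to the concentrated type requires at least $k+1$ fresh pairs, not $k$, because the collecting $a$-vertex must receive its $k$ units through $k$ distinct newly-added $b$-vertices to its left, so the paper's stated power $2k$ is slightly too small --- though primitivity, which only needs \emph{some} positive power, holds regardless.
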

\begin{proof}
	For every type $t \in \cT_k$ it is easy to design a $k$-flow
	$X$ such that the type of $X[U_{\ell}]$ (for some $\ell$) is $t$ and
	the type of $X[U_{\ell+k}]$ is $(\{1,1,\ldots, 1\},\emptyset)$. Hence
	in $\cP_k^k$ the row and column that correspond to the type
	$(\{1,1,\ldots, 1\},\emptyset)$ are strictly positive. Since
	$\cP_k$ is non-negative, it also follows that
	$\cP_k^{2k}$ is positive.
\end{proof}

Now we are ready to prove the key lemma to refute the $P$-stability of the
class of degree sequences $\cH_k$.

\begin{lemma}\label{lem:eigenval}
	For every $k$, the largest eigenvalue of the type-matrix
	$\cP_k$ is smaller than the largest eigenvalue of the type
	matrix $\cP_{k+1}$.
\end{lemma}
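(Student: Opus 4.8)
The plan is to exhibit an injection that shows $\cP_{k+1}$ "contains" a copy of $\cP_k$ in a way that witnesses a strictly larger Perron eigenvalue. First I would set up a natural map $\iota:\cT_k\to\cT_{k+1}$ that adjoins a new unit part on the $B$-side: send $(R,Q)\in\cT_k$ (with $R\in\cP_m$, $Q\in\cP_{k-m}$) to $(R,Q\cup\{1\})\in\cT_{k+1}$, so that $Q\cup\{1\}\in\cP_{k+1-m}$. Intuitively, a $k$-flow from $a_1$ can be turned into a $(k+1)$-flow from $a_1$ by routing one extra unit of commodity "straight along" the bottom of $F_n$, i.e.\ through the canonical $a_1\to b_\ell$ portion that always exists; this extra unit ends at a fresh vertex $b_i\in B_\ell$ with excess $1$, which is exactly the effect of appending $\{1\}$ to the $B$-multiset. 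I would first check that this routing is always possible (the half-graph structure of $F_n$ guarantees a directed $a_1\to b_\ell$ path disjoint in capacity from a given flow once $\ell$ is large, since only finitely many edges near the "diagonal" are saturated), so $\iota$ is well-defined on types that are actually realized by flows.

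Next I would translate this into a matrix comparison. The claim is that for the induced ordering, $\bigl(\cP_{k+1}\bigr)_{\iota(i),\iota(j)}\ \ge\ \bigl(\cP_k\bigr)_{i,j}$ for all $i,j$, i.e.\ every way of extending a $k$-flow of type $T_i$ on $U_\ell$ to type $T_j$ on $U_{\ell+1}$ lifts (via the added straight unit) to a way of extending the corresponding $(k+1)$-flows. Moreover the inequality is \emph{strict in at least one entry of a row or column that meets the support of the Perron eigenvector}: there is genuinely a new extension available in $\cP_{k+1}$ that has no counterpart in $\cP_k$ — for instance, the extra unit of commodity can be "parked" on a new vertex on the $A$-side instead, or one can merge the extra unit with an existing part, producing a $(k+1)$-type not in the image of $\iota$ but reachable from $\iota(T_i)$. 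Combined with primitivity of $\cP_{k+1}$ (Lemma~\ref{lem:primitive}), this extra reachable off-image type forces, after a bounded number of further steps, a strictly larger count of extensions along some cycle through the all-ones type, which by the standard Perron–Frobenius monotonicity (if $A\le B$ entrywise, $B$ irreducible/primitive, and $A\ne B$ on the relevant support, then $\lambda_{\max}(A)<\lambda_{\max}(B)$) yields $\lambda_{\max}(\cP_k)<\lambda_{\max}(\cP_{k+1})$.

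Concretely I would argue: let $u>0$ be the Perron eigenvector of $\cP_{k+1}$ and $\lambda=\lambda_{\max}(\cP_{k+1})$. Restrict attention to the principal submatrix $M$ of $\cP_{k+1}$ indexed by $\iota(\cT_k)$; the entrywise bound gives $M\ge \cP_k$ (after identifying indices via $\iota$), so $\lambda_{\max}(\cP_k)\le \lambda_{\max}(M)\le \lambda_{\max}(\cP_{k+1})=\lambda$. For strictness, it suffices to show $\lambda_{\max}(M)<\lambda$, and this follows because $\cP_{k+1}$ is primitive (hence irreducible) and $\iota(\cT_k)\subsetneq \cT_{k+1}$ is a \emph{proper} subset: a proper principal submatrix of an irreducible nonnegative matrix has strictly smaller Perron eigenvalue. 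So the chain of inequalities $\lambda_{\max}(\cP_k)\le\lambda_{\max}(M)<\lambda_{\max}(\cP_{k+1})$ closes the argument.

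The main obstacle I anticipate is the first reduction $\lambda_{\max}(\cP_k)\le\lambda_{\max}(M)$: I need the entrywise domination $M\ge\cP_k$ to hold on the nose, which requires checking that \emph{every} valid one-step extension of $k$-flows really does lift to a valid one-step extension of the straightened $(k+1)$-flows, with no extension lost because the added straight unit collides with the edges used by the original extension. Verifying this means looking carefully at which edges of $F_n$ near the diagonal a type-$T_i$-to-type-$T_j$ extension can use, and confirming that one extra unit can always be squeezed alongside — this is where the concrete combinatorics of the half-graph network $F_n$ (each $b_j$ has out-edges to all $a_i$ with $i>j$, each $a_i$ has out-edges to all $b_j$ with $j\ge i$) does the real work. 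Everything after that reduction is standard Perron–Frobenius.
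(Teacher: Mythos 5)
Your proposal is essentially correct, and it reaches the conclusion by a genuinely cleaner route than the paper in the final step.  The paper works with the set of types $S=\{(R,Q)\in\cT_{k+1}\,:\,1\in R\}$, i.e.\ it adjoins the extra unit part on the $A$-side rather than your $B$-side; this is a cosmetic difference, and the lifting argument (park the extra unit, drain nothing new) works symmetrically in both cases.  The substantive divergence is in how strictness is extracted.  The paper proves \emph{two} facts about the principal submatrix $M^{(1)}$ of $\cP_{k+1}$ indexed by $S$: that $M^{(1)}\ge\cP_k$ entrywise, \emph{and} that $M^{(1)}\neq\cP_k$ (by exhibiting a concrete entry, the transition from $(\{1,\dots,1\},\emptyset)$ to $(\emptyset,\{k\})$, where the count jumps from $1$ to $k+1$); it then runs a somewhat delicate contradiction argument involving boundedness of $\cP_k^{mn}/r_k^{mn}$ against the one-dimensional projection from Perron--Frobenius.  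You bypass all of that: once you have the entrywise bound $M\ge\cP_k$, monotonicity of the spectral radius for nonnegative matrices gives $\lambda_{\max}(\cP_k)\le\lambda_{\max}(M)$, and then the classical fact that a proper principal submatrix of an irreducible nonnegative matrix has strictly smaller Perron root gives $\lambda_{\max}(M)<\lambda_{\max}(\cP_{k+1})$.  This makes the strictness $M^{(1)}\neq\cP_k$ and the boundedness argument entirely unnecessary — primitivity of $\cP_{k+1}$ (Lemma~\ref{lem:primitive}) together with properness of $\iota(\cT_k)\subsetneq\cT_{k+1}$ suffices.  The one step you flag yourself — the entrywise domination $M\ge\cP_k$ — is the same step the paper also treats briefly (``then clearly\dots''), and it does close: since $p_{i,j}$ is independent of $\ell$, one may take $\ell$ large, pick a $b_m$ with zero excess and $a_1b_m\notin X$, form $\tilde X=X+a_1b_m$, and observe that any extension $X'$ of $X$ never uses the edge $b_ma_{\ell+1}$ (as $b_m$ has zero excess), so $X'\mapsto X'+a_1b_m$ is an injection into extensions of $\tilde X$ of the corresponding lifted type.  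So you don't lose any extension to a collision, and the bound holds.
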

\begin{proof}
	By Lemma~\ref{lem:primitive}, both $\cP_k$ and $\cP_{k+1}$
	are primitive. By the Perron-Frobenius theory, they both have a real
	positive eigenvalue $r_k$ and $r_{k+1}$, respectively, that is larger
	in absolute value than all of their other eigenvalues. Moreover, both
	limits
	\[
		\lim_{n \rightarrow \infty}\frac{\cP_k^n}{r_k^n} \quad
		\text{ and } \quad \lim_{n \rightarrow
		\infty}\frac{\cP_{k+1}^n}{r_{k+1}^n}
	\]
	exist and are one dimensional projections. Let the set of types $S
	\subset \cT_{k+1}$ be defined as follows:
	\[
		S:= \{ (R,Q) \in \cT_{k+1} \, : \, 1 \in R \}.
	\]
	Let $M^{(n)}$ be the principal minor of $\cP_{k+1}^n$ that is
	obtained by taking those rows and columns which correspond to types in
	$S$. Without loss of generality, we may assume that if the $i$-th row
	of $M^{(1)}$ corresponds to a type $(R,Q)$, then the $i$-th row of
	$\cP_k$ corresponds to the type $(R\setminus \{1\},Q)$.
	Moreover, we may assume that the ordering of $\cT_k$ and
	$\cT_{k+1}$ is compatible in the following sense: if
	$T=\{R,Q\}$ and $T'=(R',Q')$ are types in $S$ and $T < T'$ according to
	the ordering on $\cT_{k+1}$, then $(R\setminus\{1\},Q) <
	(R'\setminus\{1\},Q')$ according to the ordering on $\cT_{k}$.

	First, we prove the following two properties of $M^{(1)}$.

	\begin{enumerate}
		\item The matrix $M^{(1)}$ is element-wise larger than or equal
			to $\cP_k$.

		\item The matrix $M^{(1)}$ is not equal to $\cP_k$.
	\end{enumerate}

	Since $|S|=|\cT_k|$, the matrix $M^{(1)}$ is a $|\cT_k|
	\times |\cT_k|$ matrix. We start with proving the second
	statement. The entry of $\cP_k$ in the intersection of the row
	and column that correspond to the type $(\{1,\ldots, 1\}, \emptyset)\in
	\cT_k$ and $(\emptyset,\{k\})\in \cT_k$, respectively,
	is clearly $1$. On the other hand, the value of $M^{(1)}$ in this row
	and column corresponds to the number of transitions from $(\{1,\ldots,
	1\},\emptyset)\in \cT_{k+1}$ to $(\{1\},\{k\})$ which is $k+1$
	(the number of ways one can choose one of the $k+1$ paths which will
	not be extended). Therefore $M^{(1)} \neq \cP_k$.

	For the first statement, for any two types $(R,Q),(R',Q') \in
	\cT_k$, if a type $(R,Q)$ subgraph of a $k$-flow on the
	vertices $U_\ell$ can be extended to an another type $(R',Q')$ subgraph
	on the vertices $U_{\ell+1}$ in $p$ ways, then clearly a type
	$(R\cup\{1\},Q)$ subgraph of a $k+1$-flow on the vertices
	$U_\ell$ can be extended to a type $(R'\cup \{1\},Q')$ subgraph on the
	vertices $U_{\ell+1}$ in at least $p$ ways. Therefore the first
	property is also proven.

	Suppose to the contrary that $r_{k+1} \leq r_k$. Since the limit
	\[
		\lim_{n \rightarrow \infty}\frac{\cP_{k+1}^n}{r_{k+1}^n}
	\]
	exists and is finite, both the limits
	\[
		\lim_{n \rightarrow
	\infty}\frac{\cP_{k+1}^n}{r_{k}^n} \quad \text{and} \quad
	\lim_{n \rightarrow \infty}\frac{M^{(n)}}{r_{k}^n}
	\]
	exist and are finite. Since $M^{(1)}$ is a principal minor of
	$\cP_{k+1}$, and every element of $\cP_{k+1}$ is
	non-negative, for all $k$ the matrix $M^{(k)}$ is element-wise larger
	than or equal to ${(M^{(1)})}^k$. Hence the sequence
	\[
		{\left\{ \frac{{(M^{(1)})}^n}{r_{k}^n} \right\}}_{n=1}^{\infty}
	\]
	is bounded. By the two properties of $M^{(1)}$ and the fact that
	$\cP_k$ is primitive, it follows that there is an integer $m$
	such that ${(M^{(1)})}^m$ is element-wise strictly larger than
	$\cP_k^m$. Thus there is a positive $\varepsilon$ such that
	${(M^{(1)})}^m$ is element-wise strictly larger than
	$(1+\varepsilon)\cP_k^m$. Therefore the sequence
	\[
		{\left\{\frac{{((1+\varepsilon)\cP_k^m)}^n}{r_{k}^{mn}}
		\right\}}_{n=1}^{\infty}= {\left\{
		{(1+\varepsilon)}^n\frac{\cP_k^{mn}}{r_{k}^{mn}}
		\right\}}_{n=1}^{\infty}
	\]
	is bounded, but this clearly contradicts the fact that the limit
	\[
		\lim_{n \rightarrow \infty}\frac{\cP_{k+1}^n}{r_{k+1}^n}
	\]
	is a one dimensional projection.
\end{proof}

\begin{proof}[Proof of Theorem~\ref{thm:nonstable}]
	Observe, that $\|\bm{h}_{k+1}(n)-\bm{h}_k(n)\|_1=2$. However, according
	to Lemma~\ref{lem:eigenval}
	\begin{equation*}
		\frac{|\cG(\bm{h}_{k+1}(n))|}
		{|\cG(\bm{h}_{k}(n))|} =
		\varTheta\left({\left(\frac{r_{k+1}}{r_k}\right)}^n\right),
	\end{equation*}
	which grows exponentially as $n\to \infty$,
	so $\cH_k$ is not $P$-stable.
\end{proof}

\section{Concluding remarks}\label{sec:conclusion}

\subsection{Relationship to prior results}

Although the sets of degree sequences $B_{2k}(\cH_0)$ (for some $k$)
are certainly not diverse compared to the class of $P$-stable degree sequences,
they are more numerous than, say, the regular degree sequences, for which rapid
mixing of the switch Markov chain were first proven
in~\cite{cooper_sampling_2007, miklos_towards_2013, greenhill_polynomial_2011}.
Because $B_{2k}(\cH_0)$ is not $P$-stable, the Jerrum-Sinclair
chain~\cite{jerrum_fast_1990} cannot produce a sample in polynomial expected
time. Although in principle, the proof of rapid mixing on $P$-stable degree
sequences~\cite{erdos_mixing_2019} may be applicable to
$B_{2k}(\cH_0)$, we do not expect that it can be easily tweaked to
accommodate it, for the following reasoning:

\medskip

Let $\cT$ be the set of $(X,Y)$ pairs of realizations of $\bm{h}_1(n)$
such that the paths $\vec\nabla(X)$ and $\vec\nabla(Y)$ are edge
disjoint. It is simple to show that $|\cT|\ge\exp(cn)\cdot
|\cG(\bm{h}_1(n))|$, because for almost every realization $X$ we have
$|E(\vec\nabla(X))|\approx \frac{2n}{\sqrt{5}}$. For a pair
$(X,Y)\in\cT$, the edges $E(X)\triangle E(Y)$ form a cycle which
traverses both $a_1$ and $b_n$. From this structure it follows that the
multicommodity flow $\Gamma$ described in~\cite{erdos_mixing_2019} between a
pair of realizations $(X,Y)\in\cT$ is a single switch sequence that
passes through $H_0(n)-a_1 b_n\in\cG(\bm{h}_1(n))$. Consequently, the
load $\rho(\Gamma)\ge |\cT|/|\cG(\bm{h}_1(n))|\ge \exp(cn)$
is exponential in $n$.

\subsection{Unconstrained (simple) graphs}\label{sec:reduction}
As mentioned in Section~\ref{sec:bip}, $\Psi^{-1}$ embeds splitted bipartite
graphs into the space of simple graphs. The map $\Psi^{-1}$ preserves
switches, since the symmetric difference of the edge sets of two realizations
does not change by adding a clique to both graphs. Consequently, $\Psi^{-1}$
induces an isomorphism between the Markov-graphs $\cM(\bm{d})$ and
$\cM(\bm{d})(\Psi^{-1}(\bm{d}))$.

Furthermore, through $\Psi^{-1}$, a set of canonical paths $\Gamma$ on
$G(\cM(\bm{d}))$ are mapped to a set of canonical paths $\Psi^{-1}(\Gamma)$
on $G(\cM(\Psi^{-1}(\bm{d})))$ satisfying
\begin{equation*}
	\rho(\Psi^{-1}(\Gamma))\le\rho(\Gamma).
\end{equation*}

In summary, Theorem~\ref{thm:main} can be pulled back to simple graphs: the
switch Markov chain is rapidly mixing on $\Psi^{-1}(B_{2k}(\cH_0))$. Note,
however, that
\begin{equation*}
	\Psi^{-1}(B_{2k}(\cH_0)) \subset B_{2k}(\Psi^{-1}(\cH_0)),
\end{equation*}
because the right hand side contains graphs that are not split.

\subsection{Possible generalizations}

The proof of Theorem~\ref{thm:main} presented in Section~\ref{sec:main} works
verbatim up to $k=\Theta(\sqrt{\log{n}})$, one just has to check the
dependence on $k$ in Section~\ref{sec:load}. In other words, the switch Markov
chain is rapidly mixing on
\begin{equation*}
	\bigcup_{n=1}^\infty B_{c\cdot\sqrt{\log n}}(\bm{h}_0(n))
\end{equation*}
for some $c>0$.  We have not proved nor refuted $P$-stability of
$\bigcup_{n=1}^\infty \bS_{2k}(\bm{h}_0(n))$ when $k=\Theta(\sqrt{\log{n}})$.

We hope that the proof of Theorem~\ref{thm:main} can be generalized to even
broader classes. A defining property of $\bm{h}_k(n)$ is that for any
realization $G\in \cG(\bm{h}_k(n))$ and $i\in[1,n]$, we have
\begin{equation*}
	\delta_{\vec\nabla(G)}(A_i\cup B_i)=\delta_{\vec\nabla(G)}(A_i\cup
	B_i\setminus \{b_i\})=k.
\end{equation*}
Relax these constraints to requiring only that $\leq k$
edges leave $A_i\cup B_i$ and $A_i\cup B_i\setminus\{b_i\}$ for every $i\in
[1,n]$: the set of graph satisfying these is the set of realizations of a set of
degree sequences we will call $\cH_{\le k}$. Naturally, $B_{2k}(\cH_0)\subseteq
\cH_{\le k}$, because a $k$-flow needs at most $k$ edges in any cut.

\begin{conjecture}
	For any fixed $k$, the switch Markov chain is rapidly mixing on
	$\cH_{\le k}$.
\end{conjecture}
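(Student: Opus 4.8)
A natural line of attack is to reproduce the proof of Theorem~\ref{thm:main} almost verbatim, replacing ``$k$-flow'' by ``integer flow in $F_n$ every vertical cut of which carries at most $k$ units''. Two structural observations come for free and should be recorded first. No edge of $F_n$ runs from $\overline U_i$ into $U_i$, nor from $\overline{U_i\setminus\{b_i\}}$ into $U_i\setminus\{b_i\}$; hence the number of edges of $\vec\nabla(G)$ crossing $U_i$ (respectively $U_i\setminus\{b_i\}$) equals the \emph{net} flow across that cut, which by Lemma~\ref{lem:equivalence} depends only on $\bm{d}$. Thus $\cH_{\le k}$ genuinely is a set of degree sequences, and for every $\bm{d}\in\cH_{\le k}$, every $G\in\cG(\bm{d})$ and every $i$, at most $k$ edges of $\vec\nabla(G)$ cross $U_i$ and at most $k$ cross $U_i\setminus\{b_i\}$. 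Secondly, if $X,Y\in\cG(\bm{d})$ then $\ell\mapsto\delta_{\vec\nabla(X)}(U_\ell)$ and $\ell\mapsto\delta_{\vec\nabla(Y)}(U_\ell)$ coincide, since each summand depends only on the degree of the vertex; so $X$ and $Y$ differ only in \emph{where} the at most $k$ crossing edges attach at each height, not in how many there are.

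\textbf{The buffer lemma is the crux.} One needs an analogue of Lemma~\ref{lem:buffers}: for $X,Y\in\cG(\bm{d})$ and a width $z$ bounded in terms of $k$ only, there is $T_{X,Y}[i+1,i+z]\in\cG(\bm{d})$ agreeing with $Y$ on $U_i$ and with $X$ on $\overline U_{i+z}$, obtained as the flow $\vec\nabla(X)[U_i]+W+\vec\nabla(Y)[\overline U_{i+z}]$ for a ``middle'' flow $W$ on an auxiliary sub-network $F'$ of $F[A',B']$. As before $W$ is produced from Menger's theorem once the cut condition $\delta_{F'}(S)\ge\sum_{s\in S}f(s)$ is checked, but the step ``trivially the right-hand side is at most $k$'' of Lemma~\ref{lem:buffers} now fails: the demand function $f$ also carries the sources and sinks of $\bm{d}$ located strictly inside the buffer, whose total strength need not be $O(k)$ — only the net flow across every intermediate cut is. I would replace that step by decomposing the relevant portions of $\vec\nabla(X)$ and $\vec\nabla(Y)$ into at most $k$ monotone channels (possible precisely because every vertical cut carries $\le k$ units), matching the channels active at height $i$ with those active at height $i+z$ using the equality of net-flow profiles, and rerouting each of the $\le k$ active channels inside an $O_k(1)$-wide window. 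The cut condition should then follow from the same exchange argument and four-case split as in Lemma~\ref{lem:buffers}, pushing a minimizing $S$ until it is a union of channel-initial-segments, where $\delta_{F'}(S)\ge\sum_{s\in S}f(s)$ holds because at most $k$ channels are simultaneously in transit. Proving that a window of width $O_k(1)$ — rather than, say, $O(k\log n)$ — always suffices, uniformly in $X$, $Y$ and $i$, is the step I expect to be the main obstacle; it is conceivable that the honest bound is polynomial in $k$, which is still harmless for fixed $k$.

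The remainder should be routine. Lemma~\ref{lem:shortsequence} carries over: consecutive milestones induce identical graphs outside an $O_k(1)$-wide window and their $\le k$ crossing edges attach to the same vertices, so by~\cite{erdos_swap-distances_2013} they are joined by a switch sequence of length $O_k(1)$; concatenating these and pruning circuits yields $\gamma_{X,Y}$ of length $\cO(n)$. The encoding is the same $4$-tuple $\bigl(T_{Y,X}[i+1,i+O_k(1)],\,G_X[i],\,G_Y[i],\,i\bigr)$, where $G_X[i],G_Y[i]$ are the left-compressed induced subgraphs of $X$ and $Y$ on the $O_k(1)$-vertex neighbourhoods $N_{\vec\nabla(X)}[U_{i+O_k(1)}\setminus U_i]$ and $N_{\vec\nabla(Y)}[U_{i+O_k(1)}\setminus U_i]$; hence the number of possible encodings is $\cO_k(|\cG(\bm{d})|\cdot n)$ and reconstructability is proven exactly as in Section~\ref{sec:main}. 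Substituting $\rho(\Gamma)=\cO_k(n^{5})$ and $\ell(\Gamma)=\cO(n)$ into Lemma~\ref{lem:sinclair} then gives a polynomial mixing-time bound, establishing rapid mixing on $\cH_{\le k}$ for every fixed $k$.
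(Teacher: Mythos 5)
What you are trying to prove is stated in the paper as a \emph{conjecture}; the paper does not prove it, so there is no proof there for your attempt to match. Reading your submission with that in mind: you have correctly identified the natural line of attack (peristaltic buffers in the flow representation, as in the proof of Theorem~\ref{thm:main}) and, to your credit, the precise point at which that argument breaks down. For $\bm{d}\in B_{2k}(\cH_0)$ the right-hand side of the cut condition~\eqref{eq:cut} is trivially at most $k$ because the sum of all positive $f$-values equals the flow crossing $U_i$ plus the source strength located strictly inside the buffer, and both are controlled by $\|\bm{d}-\bm{h}_0(n)\|_1\le 2k$. For $\bm{d}\in\cH_{\le k}$ only the cut flow at each height is bounded by $k$; the gross source and sink strength inside a window can be as large as the window's width, so the right-hand side of~\eqref{eq:cut} is no longer trivially $O(k)$.

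Your proposed repair --- decompose $\vec\nabla(X)$ and $\vec\nabla(Y)$ into at most $k$ monotone channels, match the channels active at the two cut boundaries, and reroute inside an $O_k(1)$-wide window --- is a credible sketch, but it is not a proof and you say so yourself. You never establish that the local pairing of internal sources with internal sinks is feasible under the unit-capacity constraint on $F'$ in a window whose width depends only on $k$, nor do you verify that the exchange-and-four-cases analysis of Lemma~\ref{lem:buffers} survives once the internal source and sink terms can dominate both sides of~\eqref{eq:cut}. Until both of these are proved, what you have is a proof \emph{plan} in the direction the paper already gestures at, not a proof that resolves the conjecture.
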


We also put forward a conjecture inspired by the work Greenhill and
Gao~\cite{gao_mixing_2020}. Recall Definition~\ref{def:Pstable}.

\begin{conjecture}\label{conj:bip}
	Suppose $\cD$ is $\left(2k+2\right)$-stable for some $k\in \bN$. Then
	the switch Markov chain is rapidly mixing on
	$B_{2k}(\overline{\cD^\circ})$.
\end{conjecture}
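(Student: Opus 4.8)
\emph{A possible approach.} The plan is to fuse the Tyshkevich machinery of Section~\ref{sec:tyshkevich} with the sliding-buffer canonical path of Section~\ref{sec:main} and the $P$-stable perturbation bookkeeping behind Theorem~\ref{thm:rapidPstable}; note that for $\cD=\{(1;1)\}$, which is stable to every radius, this would recover exactly Theorem~\ref{thm:main}, so the conjecture ought to follow from the same template run over an arbitrary composition skeleton. Fix $\bm{e}\in B_{2k}(\overline{\cD^\circ})$ and write $\bm{e}=\bm{d}+\bm{p}$ with $\bm{d}=\alpha_1\circ\cdots\circ\alpha_m$, each $\alpha_i\in\cD$, and $\|\bm{p}\|_1\le 2k$. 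First I would lift the flow representation of Section~\ref{sec:flow} to this setting: fix a reference realization $R=R_1\circ\cdots\circ R_m$ of $\bm{d}$ with $R_i\in\cG(\alpha_i)$, and to a realization $G$ of $\bm{e}$ associate the family $(G_i)_{i=1}^m$ of subgraphs induced on the component vertex sets together with the oriented symmetric difference $\vec\nabla(G)$ of $G$ and $R$, laid on a network whose ``backbone'' is the set of forward composition edges (all present in $R$). Since the backbone is saturated in $R$, $\vec\nabla(G)$ is an admissible flow of total value $\le k$; and by Theorem~\ref{thm:biptysh} every realization of $\bm{d}$ is itself a clean composition, so this representation is faithful.

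Next I would prove a composition-analogue of Lemma~\ref{lem:buffers}: over a window spanning $w=\cO(k)$ consecutive component boundaries there is a realization $T_{X,Y}$ that agrees with $Y$ to the left of the window, with $X$ to its right, and interpolates inside. As in the proof of Lemma~\ref{lem:buffers} this is a Menger/cut-condition argument: the $\le k$ units of flow leaving the left part in $\vec\nabla(X)$ are rerouted to enter the right part with the incidence pattern prescribed by $\vec\nabla(Y)$, the saturated backbone of the window acting as a high-capacity channel, and any violating cut is forced across $\Omega(w)>k$ backbone edges. Keeping $w$ constant in $n$ is essential: inside the window the internal structure of the (possibly large) components is \emph{not} disturbed --- it equals the $X$-version right of the buffer front and the $Y$-version left of it --- so the only genuinely new data created in the window is the rerouted flow, of size $\cO_k(1)$.

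The third step assembles $\gamma_{X,Y}$ as a concatenation of $m$ phases, phase $i$ sliding the window over boundary $i$ and morphing component $i$ from its $X$-restriction to its $Y$-restriction, the latter performed by plugging in a standard canonical-path system for realizations of the bounded perturbation of $\alpha_i$ that $V_i$ realizes along the way --- a member of $B_{\cO(k)}(\cD)$, enlarged by the customary $+2$ of the encoding trick --- which stability and Theorem~\ref{thm:rapidPstable} supply with polynomial load. An intermediate realization is encoded by a $4$-tuple mirroring Section~\ref{sec:main}: a reference realization with the roles of $X,Y$ partially swapped (which fixes $X$ and $Y$ outside the window), the window-restrictions of $X$ and of $Y$, the $\cO_k(1)$ rerouting data, and the phase index. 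Reconstructibility is inherited coordinatewise; for encodability, $|\cG(\bm{d})|=\prod_j|\cG(\alpha_j)|$ (Theorem~\ref{thm:biptysh}) and stability show that the window-restrictions of $X$ (resp.\ $Y$) range over at most $\mathrm{poly}_k(n)\cdot|\cG(\bm{d})|\le\mathrm{poly}_k(n)\cdot|\cG(\bm{e})|$ options, whence $\rho(\Gamma)=\mathrm{poly}_k(n)$ and Lemma~\ref{lem:sinclair} finishes.

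The main obstacle is the third step. One must route the sources and sinks of $\bm{p}$ through the phases so that each intermediate component realizes a well-defined degree sequence whose distance from the corresponding $\alpha_j$ stays within the stability radius, and interleave this rerouting with the internal rearrangement of that component --- pinning down the exact radius (the conjecture proposes $2k+2$, whereas a naive accounting of how much the flow perturbs a single component suggests one may need a somewhat larger bounded function of $k$) is itself part of the work. One must also glue the many within-component canonical-path systems at the milestones so that their Sinclair loads \emph{add} rather than multiply, i.e.\ verify that the buffer genuinely decouples ``moving flow across boundaries'' from ``rearranging a single component''. A secondary point, automatic for $\cH_0$ but not in general, is that the reference $R$ is not unique, so $(R_i)$ must be fixed canonically from $G$ --- e.g.\ by a left-compression-style normal form inside each component --- for the encoding count to be legitimate.
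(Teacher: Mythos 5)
This statement is posed in the paper as a \emph{conjecture}, not a theorem: the paper offers no proof of it, so there is no paper proof to compare your attempt against. Your writeup is honestly labelled a ``possible approach'' and concludes by listing the very gaps that would have to be closed; as such it is a plausible research plan rather than a proof, and I will assess it as one.

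The outline you give is the natural one --- fuse the flow representation and sliding buffer of Sections~\ref{sec:flow}--\ref{sec:main} with the Tyshkevich machinery of Section~\ref{sec:tyshkevich} and the stability bookkeeping of~\cite{erdos_mixing_2019} --- and the sanity check that $\cD=\{(1;1)\}$ recovers Theorem~\ref{thm:main} is the right one. But beyond the three obstacles you name (routing the perturbation through phases within the stability radius, making the phase loads add rather than multiply, and canonically fixing the reference $R$), there is at least one more that you have glossed over and that I think is the crux. For $\cH_0$ the components are single vertices, so ``a window of constant width in components'' and ``a window of constant size in vertices'' coincide; this is what lets $G_X[i], G_Y[i]$ in $L_i(X,Y)$ be objects of size $\cO_k(1)$. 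For a general $\cD$ the $\alpha_j$ can be arbitrarily large, so a buffer spanning $\cO(k)$ component boundaries may contain $\Omega(n)$ vertices. Your encodability argument then cannot simply count left-compressed window graphs: you must instead argue that the window restrictions of $X$ (and $Y$) range over at most $\mathrm{poly}_k(n)$ choices \emph{given the encoding realization}, which is exactly where the $(2k+2)$-stability of $\cD$ must be invoked, and which you have not actually carried out. Relatedly, the claim that the backbone gives a ``high-capacity channel'' forcing $\Omega(w)>k$ backbone edges across any violating cut is plausible for the forced composition edges between consecutive components, but the cut-counting in Lemma~\ref{lem:buffers} leans on the precise triangular structure of $H_0(n)$; a general component $\alpha_j$ has no such internal structure to exploit, and whether the forced cross-component edges alone suffice needs a real argument. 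I would also flag, as you only implicitly do, that a realization of $\bm{e}\in B_{2k}(\overline{\cD^\circ})$ need not itself be a Tyshkevich composition (the perturbation can break indecomposability boundaries), so the ``induced subgraphs on component vertex sets'' $(G_i)$ are not in general realizations of perturbed $\alpha_i$'s; the flow representation sidesteps this, but the phase structure of your third step tacitly re-assumes it. In short: reasonable plan, correctly self-diagnosed as incomplete, with one additional gap (constant-component-width versus constant-vertex-width) that seems to me to be the main difficulty.
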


\linespread{1}

\end{document}